\DeclareMathOperator{\Id}{Id} 
\DeclareMathOperator{\ann}{ann}
\DeclareMathOperator{\Ass}{Ass}
\DeclareMathOperator{\Char}{char}
\DeclareMathOperator{\cid}{CI-dim}
\DeclareMathOperator{\curv}{curv}
\DeclareMathOperator{\cx}{cx}
\DeclareMathOperator{\depth}{depth}
\DeclareMathOperator{\edim}{edim}
\DeclareMathOperator{\embdim}{edim}
\DeclareMathOperator{\Ext}{Ext}
\DeclareMathOperator{\gdim}{G-dim}
\DeclareMathOperator{\Hom}{Hom}
\DeclareMathOperator{\id}{id}
\DeclareMathOperator{\Min}{Min}
\DeclareMathOperator{\Mod}{mod}
\DeclareMathOperator{\pd}{pd}
\DeclareMathOperator{\rank}{rank}
\DeclareMathOperator{\Soc}{Soc}
\DeclareMathOperator{\syz}{\Omega}
\DeclareMathOperator{\Tor}{Tor}
\renewcommand{\ge}{\geqslant}
\renewcommand{\le}{\leqslant}
\newcommand{\fm}{\mathfrak{m}}
\newcommand{\fp}{\mathfrak{p}}
\newcommand{\fq}{\mathfrak{q}}
\renewcommand{\iff}{if and only if }
\theoremstyle{plain}
\newtheorem{theorem}{Theorem}[section]
\newtheorem{lemma}[theorem]{Lemma}
\newtheorem{proposition}[theorem]{Proposition}
\newtheorem{corollary}[theorem]{Corollary}
\newtheorem{conjecture}[theorem]{Conjecture}
\theoremstyle{definition}
\newtheorem{definition}[theorem]{Definition}
\newtheorem{example}[theorem]{Example}
\newtheorem{para}[theorem]{}
\newtheorem{setup}[theorem]{Setup}
\theoremstyle{remark}
\newtheorem{remark}[theorem]{Remark}
\numberwithin{equation}{section}
\title[Complexity and rigidity of Ulrich modules]{Complexity and rigidity of Ulrich modules, and some applications}
\author{Souvik Dey}
\address{Department of Mathematics, University of Kansas, 405 Snow Hall, 1460 Jayhawk Blvd, Lawrence, KS 66045, U.S.A.}
\email{souvik@ku.edu} 
\author[Dipankar Ghosh]{Dipankar Ghosh}
\address{Department of Mathematics, Indian Institute of Technology Kharagpur, Kharagpur - 721302, West Bengal, India}
\email{dipankar@maths.iitkgp.ac.in}
\date{}       
\subjclass[2010]{Primary 13D07, 13D02, 13H10}
\keywords{Ulrich modules; Complexity and curvature; Rigidity; Tor; Ext}
\begin{document}

\pagenumbering{arabic}
\thispagestyle{empty} 
  
 \begin{abstract}
 	We analyze whether Ulrich modules, not necessarily maximal CM (Cohen-Macaulay), can be used as test modules, which detect finite homological dimensions of modules. We prove that Ulrich modules over CM local rings have maximal complexity and curvature. Various new characterizations of local rings are provided in terms of Ulrich modules. We show that every Ulrich module of dimension $s$ over a local ring is $(s+1)$-Tor-rigid-test, but not $s$-$\Tor$-rigid in general (where $s\ge 1$). Over a deformation of a CM local ring of minimal multiplicity, we also study Tor rigidity.  
 \end{abstract}

\maketitle

\section{Introduction}
	
	\begin{setup}\label{setup}
		Unless otherwise specified, let $(R,\fm,k)$ be a commutative Noetherian local ring of dimension $d$. All $R$-modules are assumed to be finitely generated unless otherwise stated.
	\end{setup}  
	
	The notion of Ulrich modules that we use in the present article is as in \cite[Defn.~2.1]{GTT15}, namely, a non-zero CM (Cohen-Macaulay) $R$-module is called Ulrich if $e(M)=\mu(M)$, where $e(M)$ denotes the multiplicity of $M$ wth respect to $\fm$, and $\mu(M)$ is the minimal number of generators of $M$. When $R$ is CM and $M$ is MCM (maximal Cohen-Macaulay), this notion coincides with the definition of \cite[pp.~183]{BHU87}. When $k$ is infinite, then $M$ is Ulrich if and only if $\fm M = ({\bf x})M$ for some system of parameters ${\bf x}=x_1,\cdots,x_s$ of $M$; see \cite[Prop.~2.2.(2)]{GTT15}. 
	
	In this article, we analyze whether Ulrich modules can be used as test modules, which detect finite homological dimensions of modules (particularly, freeness, or projective dimension and injective dimension of a module), and we characterize various local rings using Ulrich modules. We study complexity and rigidity of Ulrich modules, and provide some applications. We prove the following results on rigidity in Section~\ref{sec:Rigidity of Ulrich modules}. See Definition~\ref{defn:test-modules} for the terminologies on Tor rigidity and test modules.  
	
	\begin{theorem}[See \ref{cor:Ulrich-mod-d+1-Tor-rigid} and \ref{thm:exam-reduced-Gor-dim-1-min-mult}]
		Let $M$ be an Ulrich $R$-module of dimension $s$. Then
		\begin{enumerate}[\rm (1)]
			\item $M$ is {\rm $(s+1)$-$\Tor$-rigid-test}.
			\item $M$ need not be $s$-$\Tor$-rigid. Indeed, for every reduced Gorenstein local ring of dimension $1$ and of minimal multiplicity which is not an integral domain, there exists MCM Ulrich modules that are neither rigid-test nor strongly rigid.
		\end{enumerate}
	\end{theorem}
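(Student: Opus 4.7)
For part (1), I plan to exploit the Ulrich condition through the Koszul complex on a system of parameters. After the standard faithfully flat base change $R \to R[t]_{\fm R[t]}$ I may assume the residue field $k$ is infinite, so by the equivalent characterization of Ulrich modules recalled in the introduction there exists a system of parameters $\mathbf{x} = x_1, \dots, x_s$ of $M$ with $\fm M = (\mathbf{x}) M$. Because $M$ is Cohen--Macaulay of dimension $s$ the sequence $\mathbf{x}$ is $M$-regular, so the Koszul complex $K(\mathbf{x}; M)$ is a resolution of $M/\fm M \cong k^{\mu(M)}$ of length exactly $s$ whose terms are $M^{\binom{s}{j}}$.

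I then break this resolution into the standard short exact sequences $0 \to \Omega_{j+1} \to M^{\binom{s}{j}} \to \Omega_j \to 0$, where $\Omega_0 = k^{\mu(M)}$ and $\Omega_s = M$, and dimension-shift across them. Given $\Tor^R_i(M, N) = 0$ for each of the $s+1$ consecutive indices $i = I, I+1, \dots, I+s$ with $I \geq 1$, each pair of successive vanishings makes available an isomorphism of the form $\Tor^R_{r+1}(\Omega_j, N) \cong \Tor^R_{r}(\Omega_{j+1}, N)$, and telescoping the $s$ shifts yields $\Tor^R_{I+s}(k, N)^{\mu(M)} \cong \Tor^R_I(M, N) = 0$. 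Hence $\Tor^R_{I+s}(k, N) = 0$, and by Auslander--Buchsbaum--Serre $\prd_R N \le I + s - 1 < \infty$, giving the test property; the rigidity statement for all $j \ge I$ is then automatic because $\Tor^R_j(-, N) = 0$ for all $j > \prd_R N$.

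For part (2) I start with a structural reduction of the ring. Picking a regular $x \in \fm \setminus \fm^2$ and using the minimal multiplicity relation $\fm^2 = x\fm$ gives $\bar{\fm}^2 = 0$ in the Artinian Gorenstein ring $R/xR$, forcing $\bar{\fm} \subseteq \Soc(R/xR)$. Since that socle is one-dimensional, $\embdim(R) \le 2$; combined with ``not a domain'' (which rules out the DVR case $\embdim = 1$), this gives $\embdim(R) = 2$, so up to completion $R \cong k[[x, y]]/(fg)$ with $f, g$ coprime nonunits and $fg$ squarefree. I then take $M := R/(f)$, which is a DVR (hence MCM of rank one with $e(M) = \mu(M) = 1$, and therefore Ulrich), and $N := R/(g)$.

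The Tor computation is then direct from the classical $2$-periodic matrix-factorization resolution of $M$, whose differentials alternate between multiplication by $f$ and by $g$. Tensoring with $N = R/(g)$ kills the $g$-differentials and leaves $f$-differentials injective on $N$ (because $f$ and $g$ are coprime), so $\Tor^R_{2i+1}(M, N) = 0$ and $\Tor^R_{2i}(M, N) \cong R/(f, g) \ne 0$ for every $i \ge 0$. Thus $\Tor^R_1(M, N) = 0$ while $\Tor^R_2(M, N) \ne 0$, witnessing the failure of $1$-$\Tor$-rigidity and of strong rigidity; moreover $\prd_R N = \infty$ because the resolution is periodic, so $M$ also fails to be rigid-test. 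I expect the main obstacle to be the structural reduction in part (2) (especially handling possible incompleteness of $R$ cleanly); once that reduction is in hand, both the Koszul dimension-shift in part (1) and the matrix-factorization computation in part (2) are routine.
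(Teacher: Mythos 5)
Your argument for part (1) is correct and is essentially the paper's own: after the reduction to infinite residue field, the paper also exploits $\fm M=({\bf x})M$ with ${\bf x}$ an $M$-regular system of parameters and dimension-shifts the $s+1$ consecutive vanishings down to a single vanishing $\Tor^R_{n+s}(k,N)=0$ (Lemma~\ref{lem:cons-vanishing-Tor} plus Proposition~\ref{prop:improvment-Ghosh-Puthen}). Packaging the shift as the Koszul resolution of $M/\fm M\cong k^{\mu(M)}$ instead of one element at a time is only a cosmetic difference, and your bookkeeping of the $s$ shifts against the $s+1$ vanishings is right.

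Part (2) has a genuine gap, and it sits exactly where you anticipated: the structural reduction. First, the identification $\widehat{R}\cong k[[x,y]]/(fg)$ with $fg$ squarefree is not free of charge: Cohen's structure theorem in that form needs equicharacteristic (the theorem assumes none), and the completion of a reduced local ring need not be reduced, so ``$fg$ squarefree'' requires an argument (one can rule out $h=uf^2$ by comparing $\Min(\widehat R)$ with $\Min(R)$, but you must say this). Second, and more seriously, even granting $\widehat R\cong Q/(fg)$, the modules $\widehat R/(f)$ and $\widehat R/(g)$ live over $\widehat R$, whereas the theorem asserts the existence of Ulrich \emph{$R$}-modules failing rigidity. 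They descend to $R$ only if they are extended, i.e.\ only if $\fp\widehat R=(f)$ for the corresponding minimal prime $\fp$ of $R$, which amounts to $R/\fp$ being analytically unramified --- again not automatic for a general Noetherian local ring. Your proposal never addresses this descent, so as written it only proves the statement for complete (or otherwise analytically well-behaved) $R$.

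The paper avoids completion entirely, and this is worth internalizing as the fix: it takes $I$ and $J$ to be complementary intersections of minimal primes of $R$ itself, observes that $R/I$ and $R/J$ are MCM, totally reflexive (as $R$ is Gorenstein) and without free summands, hence Ulrich by \cite[Prop.~3.6]{umm}; it computes $\Tor_1^R(R/I,R/J)\cong (I\cap J)/IJ=0$ directly from $I\cap J=0$; and it obtains $\Tor_2^R(R/I,R/J)\neq 0$ not from an explicit resolution but from part (1): if $\Tor_1$ and $\Tor_2$ both vanished, Proposition~\ref{prop:improvment-Ghosh-Puthen}(1) would force $\pd_R(R/J)<\infty$, contradicting that $R/J$ is a non-free MCM module. (An equally intrinsic route to Ulrichness here: additivity of multiplicity over the minimal primes of the reduced ring $R$ gives $e(R/\fp)=1=\mu(R/\fp)$.) If you want to keep the matrix-factorization computation, you must either restrict to complete $R$ or replace that step by an argument of this intrinsic kind.
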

	
	In \cite[Cor.~9]{Avr96}, Avramov proved that every nonzero homomorphic image of a finite direct sum of syzygy modules of the residue field has maximal complexity and curvature (see Definitions~\ref{def:complexity} and \ref{defn:max-complexity-curvature}). In Section~\ref{sec:Complexity of Ulrich modules}, we show that Ulrich modules also have maximal complexity and curvature, see Theorem~\ref{thm:Ulrich-max-cx}. Moreover, we obtain a number of characterizations of various CM local rings via Ulrich modules. We prove the following results in this regard.
	
	\begin{theorem}[See \ref{cor:charac-CI-via-Ulrich} and \ref{prop:charac-regular-by-Ulrich-mod}]\label{thm:complexity-various-charac-local-rings}
		Let $M$ be an Ulrich $R$-module.
		\begin{enumerate}[\rm (1)]
			\item When $R$ is CM, then
			\begin{center}
				$R$ is complete intersection $\Longleftrightarrow$ $\cx_R(M) < \infty$ $\Longleftrightarrow$ $\curv_R(M) \le 1$.
			\end{center}
			\item If $N$ is also an Ulrich $R$-module, then
			\begin{align*}   
			\mbox{$R$ is regular} \;&\Leftrightarrow \; \Ext_R^{n \le i \le n+s}(M,N)=0 \mbox{ for some } n\ge 1\text{ with } n+\dim(M)\ge \depth_R N\\
			& \Leftrightarrow \; \Tor^R_{n \le i \le n+s}(M,N)=0 \; \mbox{ for some }n\ge 1,
			\end{align*}
			where $s=\min\{\dim(M),\dim(N)\}$.
		\end{enumerate}
	\end{theorem}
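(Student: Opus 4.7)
For part (1), my plan is to invoke Theorem~\ref{thm:Ulrich-max-cx}, which establishes that every Ulrich $R$-module $M$ attains maximal complexity and curvature, i.e.\ $\cx_R(M)=\cx_R(k)$ and $\curv_R(M)=\curv_R(k)$. The three equivalences then follow at once from classical characterizations of complete intersections: Gulliksen's theorem states that $\cx_R(k)<\infty$ iff $R$ is a complete intersection, and Avramov's refinement states that $\curv_R(k)\le 1$ iff $R$ is a complete intersection. There is essentially no work beyond combining these.

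For part (2), the forward direction is routine: regularity of $R$ yields $\projdim_R M \le \dim R$, so $\Tor_i^R(M,N)=\Ext_R^i(M,N)=0$ for every $i>\dim R$, and any sufficiently large $n$ satisfies the side conditions. For the backward direction via Tor, I plan to invoke part (1) of the first theorem of the introduction, which guarantees that $M$ is $(\dim M+1)$-$\Tor$-rigid-test. After interchanging $M$ and $N$ if necessary, we may assume $\dim M \le \dim N$, so that $s=\dim M$; the given vanishing of $s+1$ consecutive positive Tor modules combined with the rigid-test property then forces $\projdim_R N<\infty$. The final step is a clean application of Theorem~\ref{thm:Ulrich-max-cx}: since $N$ is Ulrich, $\cx_R(N)=\cx_R(k)$, while $\projdim_R N<\infty$ gives $\cx_R(N)=0$; hence $\cx_R(k)=0$, i.e.\ $\projdim_R k<\infty$, so $R$ is regular.

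For the Ext-based converse, the plan is parallel, invoking an Ext-version of the rigid-test property for Ulrich modules; the additional hypothesis $n+\dim M \ge \depth_R N$ is precisely the depth condition needed to convert Ext-vanishing into a conclusion about projective dimension (via a standard depth/regular-sequence reduction, or duality against a canonical module when available). Once $\projdim_R N<\infty$ is secured, the same application of Theorem~\ref{thm:Ulrich-max-cx} as above yields that $R$ is regular. I expect the main technical hurdle to be the formulation and verification of the appropriate Ext-rigid-test companion with matching numerical bounds on Ulrich modules; everything else is either a direct citation of earlier results or a short deduction.
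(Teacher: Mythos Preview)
Your plan for part~(1) is correct and matches the paper's argument (Corollary~\ref{cor:charac-CI-via-Ulrich}): invoke Theorem~\ref{thm:Ulrich-max-cx} and the classical characterizations of complete intersections via $\cx_R(k)$ and $\curv_R(k)$.

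For part~(2) there is a genuine gap. The hypothesis ``$R$ is CM'' appears only in part~(1); part~(2) must be proved for an arbitrary local ring. Your final step in both the Tor and Ext directions appeals to Theorem~\ref{thm:Ulrich-max-cx} to deduce $\cx_R(N)=\cx_R(k)$ from the fact that $N$ is Ulrich, but Theorem~\ref{thm:Ulrich-max-cx} is stated and proved only for CM rings (its proof needs $\depth R\ge\dim M$ to find a superficial element regular on $R\oplus M$). So your deduction ``$\pd_R N<\infty$ and $N$ Ulrich $\Rightarrow$ $R$ regular'' is not justified in this generality. The paper (Proposition~\ref{prop:charac-regular-by-Ulrich-mod}) circumvents this entirely: once $\pd_R N<\infty$ (resp.\ $\id_R N<\infty$) is obtained, it applies Proposition~\ref{prop:improvment-Ghosh-Puthen} once more with $k$ in the second (resp.\ first) slot to conclude $\id_R k<\infty$ (resp.\ $\pd_R k<\infty$), hence $R$ is regular---no CM hypothesis and no complexity argument needed.

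There is a second issue in your Ext paragraph. The consecutive vanishing of $\Ext^i_R(M,N)$ does \emph{not} yield $\pd_R N<\infty$. With $s=\dim M$ and the depth condition $n+\dim M\ge\depth_R N$, Proposition~\ref{prop:improvment-Ghosh-Puthen}(2) gives $\id_R N<\infty$; with $s=\dim N$, Proposition~\ref{prop:improvment-Ghosh-Puthen}(3) gives $\pd_R M<\infty$. Either conclusion suffices (via the argument in the previous paragraph), but neither is the conclusion you wrote, and your vague ``Ext-rigid-test companion'' is precisely these two parts of Proposition~\ref{prop:improvment-Ghosh-Puthen}, which the paper has already established.
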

	
	Similar characterizations as in Theorem~\ref{thm:complexity-various-charac-local-rings}.(2) of Gorenstein local rings are shown in terms of certain consecutive vanishing of $\Ext$ or $\Tor$ involving an Ulrich module and a nonzero module of finite projective or injective dimension, see Proposition~\ref{prop:charac-Gor-by-Ulrich-mod}.
	
	Next, we analyze MCM Ulrich modules over a CM local ring of minimal multiplicity, and provide some applications.
	
	\begin{theorem}[See \ref{cor:min-mult-comlete-inter}, \ref{prop:exam-Ulrich-hom-image-syz-k}, \ref{minrigid} and \ref{cor:ARC-min-mult}]\label{thm:results-over-min-mult}
		Suppose that $R$ is CM of minimal multiplicity.
		\begin{enumerate}[\rm (1)]
			\item Then $ R $ is an abstract hypersurface \iff $R$ admits a module $M$ such that $ 0 < \cx_R(M) < \infty $ $($\iff $ 0 < \curv_R(M) \le 1 $$)$.
			\item Every MCM homomorphic image of a finite direct sum of syzygy modules of $k$ is Ulrich.
			\item Every $R$-module $M$ is $ (2d-t+2) $-$ \Tor $-rigid, where $ t = \depth_R(M) $.
			\item For an $R$-module $M$ with $t = \depth(M)$, if
				$\Ext^i_R(M,M)=\Ext^j_R(M, R)=0$ for all $1\le i \le d+1$ and $1\le j \le 2d-t+2$,
			then $M$ is free.
		\end{enumerate}
	\end{theorem}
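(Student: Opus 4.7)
\emph{Part (1).} The forward direction is immediate: a non-regular abstract hypersurface has $\cx_R(k)=1$, so $M=k$ works. For the converse, given $M$ with $0<\cx_R(M)<\infty$ of depth $t$, I would consider the MCM module $N:=\syz_R^{d-t+1}(M)$, which inherits the complexity of $M$. The plan is to invoke the basic fact that, over a CM local ring of minimal multiplicity, the first syzygy of any MCM module is Ulrich---obtained by tensoring $0\to\syz L\to F\to L\to 0$ with $R/J$ (for $J$ a minimal reduction of $\fm$) and observing that $\bar{\fm}$ kills $\fm F/JF$ since $\fm^2=J\fm$. Then $N$ is Ulrich of finite positive complexity, and Corollary~\ref{cor:charac-CI-via-Ulrich} forces $R$ to be a complete intersection. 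Combining $e(R)=\codim(R)+1$ (from minimal multiplicity) with the CI multiplicity bound $e(R)\ge 2^{\codim(R)}$ then yields $\codim(R)\le 1$, so $R$ is an abstract hypersurface.

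\emph{Part (2).} The core claim to establish is $\fm\,\syz_R^a(k)\subseteq J\,\syz_R^a(k)$ for every $a\ge 1$. Granting this, for any surjection $\phi\colon\bigoplus_j\syz^{a_j}(k)\twoheadrightarrow M$ with $M$ MCM one has $\fm M=\phi(\fm\,\bigoplus\syz^{a_j}(k))\subseteq\phi(J\,\bigoplus\syz^{a_j}(k))=JM$, and the MCM identity $\ell(M/JM)=e(M)$ then gives $e(M)=\mu(M)$, i.e., $M$ is Ulrich. I would prove the key claim by induction on $a$: the base $a=1$ amounts to $\fm^2=J\fm$, and for $a\ge d+1$ the claim follows from the ``syzygy of MCM is Ulrich'' step above applied to the MCM module $\syz^{a-1}(k)$. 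The hard intermediate range $2\le a\le d$ is the main obstacle: here $\syz^{a-1}(k)$ is not MCM, so tensoring $0\to\syz^a(k)\to F_{a-1}\to\syz^{a-1}(k)\to 0$ with $R/J$ yields a nontrivial kernel $\Tor_1^R(\syz^{a-1}(k),R/J)\cong k^{\binom{d}{a}}$ into which $\bar\fm\cdot\overline{\syz^a(k)}$ could a priori flow; the plan is to identify these kernel elements with Koszul cycles lifted via the comparison map $K_\bullet(\ux;R)\to F_\bullet(k)$ and check that $\bar\fm$ annihilates them.

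\emph{Parts (3) and (4).} For Part~(3), setting $t=\depth_R(M)$, Part~(2) realizes $U:=\syz_R^{d-t+1}(M)$ as an Ulrich module of dimension $d$ (being the first syzygy of the MCM module $\syz_R^{d-t}(M)$). By Corollary~\ref{cor:Ulrich-mod-d+1-Tor-rigid}, $U$ is $(d+1)$-$\Tor$-rigid-test, and the dimension-shifting isomorphism $\Tor^R_{i+(d-t+1)}(M,-)\cong\Tor^R_i(U,-)$ translates this directly into: vanishing of $\Tor^R_{2d-t+2}(M,N)$ forces vanishing of $\Tor^R_j(M,N)$ for every $j\ge 2d-t+2$. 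For Part~(4), I would pass from the Ext vanishing $\Ext^j_R(M,R)=0$ ($1\le j\le 2d-t+2$) to $\Tor$-rigidity of the Auslander transpose $\tr M$---whose syzygies compute $\Ext^*(M,R)$---and apply Part~(3) to propagate the vanishing to all $j\ge 1$, so that $M$ becomes totally reflexive and, over the CM ring $R$, acquires finite projective dimension bounded by $d$. The remaining Ext vanishing $\Ext^i_R(M,M)=0$ for $1\le i\le d+1$ then spans the full projective-dimension range, and an Auslander--Reiten-conjecture-type argument (available in this rigid setting) forces $M$ to be free. The principal technical difficulty is the Ext-to-Tor passage in the first step of Part~(4), which I would handle by exploiting the specific transpose-syzygy formalism that goes through without a Gorenstein hypothesis on $R$.
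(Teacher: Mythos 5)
Parts (1) and (3) are essentially sound: your ``first syzygy of an MCM module is Ulrich'' argument (tensoring $0\to\syz L\to F\to L\to 0$ with $R/J$ and using $\Tor_1^R(L,R/J)=0$ together with $\fm^2=J\fm$) is correct and is the same mechanism the paper invokes via \cite[Prop.~3.6]{umm}, and your finish of (1) via $e(R)\ge 2^{\codim R}$ is a legitimate alternative to the paper's socle computation. For (3), note only that the Ulrich property of $\syz^{d-t+1}_R(M)$ comes from this syzygy lemma, not from Part~(2) as you wrote, since $\syz^{d-t+1}_R(M)$ need not be a homomorphic image of syzygies of $k$.

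Part (2) has the gap you yourself flag: your key claim $\fm\,\syz^a_R(k)\subseteq J\,\syz^a_R(k)$ is true for all $a\ge 1$, but your Koszul-cycle plan for $2\le a\le d$ is not carried out, and this range genuinely occurs in the hypothesis. The paper avoids the problem entirely by reducing modulo the minimal reduction $\ux$ first: by \cite[Cor.~5.3]{Tak06}, $\syz^a_R(k)\otimes_R R/(x)\cong\syz^a_{R/(x)}(k)\oplus\syz^{a-1}_{R/(x)}(k)$ for $x\in\fm\smallsetminus\fm^2$ regular, so iterating, $\overline{\syz^a_R(k)}$ is a direct sum of $\overline R$-syzygies of $k$; over the Artinian reduction $\overline R$ one has $\overline\fm^2=0$, so $\overline\fm\subseteq\Soc(\overline R)$ annihilates every such syzygy by \cite[Lem.~2.1]{GGP18}. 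This is exactly the missing ingredient for your intermediate range, and you should use it rather than a Koszul comparison.

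Part (4) is broken. You cannot deduce finite projective dimension from $\Ext^j_R(M,R)=0$ for $1\le j\le 2d-t+2$ alone: over a non-regular hypersurface of minimal multiplicity (e.g.\ $k[[x,y]]/(x^2)$ type examples, or the rings of Theorem~\ref{thm:exam-reduced-Gor-dim-1-min-mult}), every non-free MCM module satisfies $\Ext^{j\ge 1}_R(M,R)=0$ yet has infinite projective dimension; more generally ``totally reflexive'' does not imply ``finite projective dimension'' over a CM ring --- that is precisely the content of G-dimension. Hence your intermediate conclusion that $M$ ``acquires finite projective dimension bounded by $d$'' before the self-Ext hypothesis is used is false, and the closing appeal to ``an Auslander--Reiten-conjecture-type argument'' is circular, since Part~(4) \emph{is} such a statement. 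The paper's route uses both vanishing hypotheses simultaneously: Lemma~\ref{lem:Ext-vanishing} shifts $\Ext^i_R(M,M)=0$ ($1\le i\le d+1$) to $\Ext^j_R(M,\syz^{d-t+1}_R(M))=0$ for $d-t+2\le j\le 2d-t+2$, using the $\Ext^j_R(M,R)$ vanishing to absorb the free modules in the syzygy sequences; since $\syz^{d-t+1}_R(M)$ is Ulrich of dimension $d$, Proposition~\ref{prop:improvment-Ghosh-Puthen}.(3) gives $\pd_R M<\infty$, and only then does Lemma~\ref{nonvanish} combined with $\Ext^{j}_R(M,R)=0$ for $1\le j\le 2d-t+2\;(\ge\pd_R M)$ force $M$ to be free.
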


	Next, we strengthen \cite[Cor.~6.5]{GP19b} (one of the two main results in that paper). See Corollary~\ref{deformcor} and the preceding paragraph. Moreover, we analyze the rigidity of an arbitrary module over a deformation $S$ of a CM local ring of minimal multiplicity. In Section~\ref{sec:Ext-persistency-ARC}, we study a form of Ext-persistency of $S$. We also show that the ARC (Auslander-Reiten Conjecture \ref{ARC}) \cite{AR75} holds true over such rings, which recovers a very special case of \cite[Cor.~7.3]{AINS20}. We summarize the main results of the last two sections in the following theorem.
	
	\begin{theorem}[See \ref{deformcor}, \ref{gdeform}, \ref{cor:rigidity-over-deform}, \ref{thm:defor-Ext-persistency} and \ref{thm:ARC-over-deform}]\label{thm:results-over-deform-min-mult}
		Suppose $R$ is CM of minimal multiplicity. Let $ S = R/(f_1,\dots,f_c)R $, where $ f_1,\dots,f_c \in \fm $ is an $R$-regular sequence. Let $M$ and $N$ be $S$-modules. Set $i_0:=\dim(S)-\depth(M)$.
		\begin{enumerate}[\rm (1)]
			\item The following are equivalent:	
			\begin{enumerate}[\rm (i)]
				\item  $\Tor_i^S(M,N)=0$ for some $(d+c+1)$ consecutive values of $i\ge i_0+2$. 
				\item $\Tor_i^S(M,N)=0$ for all $i\ge i_0+1$.  
			\end{enumerate}	
			Moreover, if this holds true, then  either $ \pd_R M < \infty $ or $ \pd_R N < \infty $.
			
			If $N$ is embedded in an $S$-module of finite projective dimension, then the threshold $i_0+2$ above can be further improved to $i_0+1$.
			\item The $S$-module $M$ is $(d+c+i_0+2)$-$\Tor$-rigid.
			\item If $ i_0 \le 1$ and $\Ext^i_S(M,M)=0$ for some $(d+c+1)$ consecutive values of $i\ge i_0+2$, then either $\pd_S M < \infty $ or $\id_S M<\infty$.
			\item If $\Ext^i_S(M,M)=\Ext_S^j(M,S)=0$ for all $i_0+2 \le i \le i_0+(d+c)+2$ and $1 \le j \le 2i_0 + (d+c)+2$, then $M$ is free.
		\end{enumerate}
	\end{theorem}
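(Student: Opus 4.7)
The four assertions share a common core, and I would derive them in the unified way I outline below. The starting observation is a numerical match: setting $t := \depth_R(M) = \depth_S(M)$ (the two depths agree, since $M$ is an $S$-module and both rings share the same maximal ideal), and using $\dim S = d-c$, one gets
$$
2d - t + 2 \;=\; d + c + i_0 + 2.
$$
So the $S$-rigidity bound $d+c+i_0+2$ predicted by Parts~(1) and (2) coincides exactly with the $R$-rigidity bound $2d-t+2$ furnished by Theorem~\ref{thm:results-over-min-mult}.(3), and the Ext-ranges in Part~(4) similarly correspond to those in the $R$-freeness criterion of Theorem~\ref{thm:results-over-min-mult}.(4). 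My plan is therefore to lift the $S$-hypotheses to $R$-hypotheses via change of rings, invoke the corresponding $R$-result from Theorem~\ref{thm:results-over-min-mult}, and then descend back.

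For Part~(1), the technical engine is the Cartan--Eilenberg change of rings spectral sequence. Since $f_1,\ldots,f_c$ annihilate $N$, the Koszul resolution of $S$ over $R$ gives $\Tor^R_q(S,N) \cong N^{\binom{c}{q}}$ for $0 \le q \le c$, so
$$
E^2_{p,q} \;=\; \Tor^S_p(M,N)^{\binom{c}{q}} \;\Longrightarrow\; \Tor^R_{p+q}(M,N).
$$
First I would induct on $c$, handling each deformation step $R_{j+1} = R_j/(f_{j+1})$ via a Shamash-type long exact sequence, so that at each stage a consecutive range of $\Tor^{R_{j+1}}$ vanishings translates to a consecutive range of $\Tor^{R_j}$ vanishings (with the index shift dictated by the two-step recursion). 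Starting from the hypothesized $(d+c+1)$ consecutive $\Tor^S$ vanishings above $i_0+2$, this iteration should produce the consecutive $\Tor^R$ vanishings needed to invoke the $(2d-t+2)$-Tor-rigidity of Theorem~\ref{thm:results-over-min-mult}.(3); combined with Theorem~\ref{thm:complexity-various-charac-local-rings}.(1) to force the complexity of $M$ or $N$ to be finite, this gives the $\pd_R$-dichotomy together with $\Tor^R_i(M,N) = 0$ for all $i \gg 0$. Walking back through the same long exact sequences then pushes the $\Tor^S$ vanishing down to all $i \ge i_0+1$. The improved threshold $i_0+1$ under the embedding hypothesis $N \hookrightarrow L$ with $\pd_S L < \infty$ I would obtain by applying $\Tor^S(M,-)$ to $0 \to N \to L \to L/N \to 0$, which gives a single-index shift.

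Part~(2) is then a direct numerical consequence of the proof of Part~(1) via the identity $2d-t+2 = d+c+i_0+2$. For Part~(3), I would apply Part~(1) with $N = M$: converting $\Ext^i_S(M,M)$ vanishings to $\Tor^S$ vanishings via a transpose/syzygy duality, the hypothesis $i_0 \le 1$ should keep the resulting vanishings in the required window, and the ``$\pd_R M$ vs $\pd_R N$'' dichotomy from Part~(1) translates into the ``$\pd_S M < \infty$ vs $\id_S M < \infty$'' dichotomy, yielding the ARC-type conclusion \ref{thm:ARC-over-deform}. For Part~(4), the extra vanishing of $\Ext^j_S(M,S)$ for $1 \le j \le 2i_0+(d+c)+2$ is meant to rule out the $\id_S M < \infty$ branch of Part~(3); once $\pd_S M < \infty$, I would combine this with the lifted Ext-hypotheses to trigger the $R$-freeness criterion Theorem~\ref{thm:results-over-min-mult}.(4), obtaining $\pd_R M < \infty$, which then descends to freeness over $S$ via $\pd_R M = \pd_S M + c$ and the Auslander--Buchsbaum formula. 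The hardest part will be the careful index bookkeeping in the iterated Shamash sequences: each step loses exactly one consecutive vanishing and the $\binom{c}{q}$ multiplicities in the spectral sequence can reintroduce obstructions from low-$q$ rows, so I plan to manage this by reducing at every inductive step to a codimension-one situation where the count can be traced explicitly.
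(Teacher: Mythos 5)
Your treatment of Parts (1) and (2) is essentially the paper's own route: an induction on $c$ in which each deformation step $R_{j+1}=R_j/(f_{j+1})$ converts $m$ consecutive vanishings of $\Tor^{R_{j+1}}$ into $m-1$ consecutive vanishings of $\Tor^{R_j}$ via the change-of-rings (Shamash) long exact sequence, landing after $c$ steps in the minimal-multiplicity base case (Proposition~\ref{prop:vanishing-Tor-and-finite-pd}), followed by a descent using the periodicity isomorphisms $\Tor^S_{i+2}\cong\Tor^S_i$; your index count $i_0+2+c = d-t+2$ checks out, and your handling of the improved threshold via $0\to N\to L\to L/N\to 0$ is exactly Corollary~\ref{gdeform}. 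Two small remarks: the dichotomy ``$\pd_R M<\infty$ or $\pd_R N<\infty$'' comes from Proposition~\ref{prop:vanishing-Tor-and-finite-pd}, not from a complexity argument; and Part (2) follows from the $S$-level vanishing criterion of Part (1) (shift the window by $i_0+1$), not from the numerical coincidence with the $R$-level bound $2d-t+2$ --- $\Tor$-rigidity over $R$ does not transfer to $\Tor$-rigidity over $S$ by arithmetic alone.

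Parts (3) and (4) have genuine gaps. For (3), you propose to ``convert $\Ext^i_S(M,M)$ vanishings to $\Tor^S$ vanishings via a transpose/syzygy duality'' and then quote Part (1). No such conversion is available in this generality, and even if it were, Part (1) can only produce the alternative ``$\pd_R M<\infty$ or $\pd_R N<\infty$''; it cannot produce the branch $\id_S M<\infty$, which is the whole content of Theorem~\ref{thm:defor-Ext-persistency}. The paper's mechanism is entirely different: from $\Ext^2_S(M,M)=0$ one invokes the Auslander--Ding--Solberg lifting to produce an $R$-module $N$ with $M\cong N/(\mathbf f)N$ and $\Tor^R_{\ge 1}(N,S)=0$, and one proves by induction on $c$ (Lemma~\ref{extdeform}) that $S$ inherits ``weak Ext-persistence'' from $R$, the base case over $R$ coming from the Ulrich module $\syz^R_{d+1}(M)$ and Proposition~\ref{prop:improvment-Ghosh-Puthen}(2). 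This lifting step is absent from your proposal and is indispensable. For (4), your claim that the $\Ext^j_S(M,S)$ vanishing ``rules out the $\id_S M<\infty$ branch'' is not substantiated and is not how the argument goes: the paper assumes $\pd_S M=\infty$, deduces $\id_S M<\infty$ from (3), uses the lifting again to get $\pd_R M=\pd_S M+c=\infty$, whence the dichotomy from the $\Ext(M,S)$ hypotheses forces $\id_R S<\infty$, i.e.\ $R$ (hence $S$) Gorenstein, and then $\id_S M<\infty$ contradicts $\pd_S M=\infty$. Also, once $\pd_S M<\infty$ is known, freeness follows immediately from Lemma~\ref{nonvanish} applied over $S$; there is no need to pass to an $R$-freeness criterion and descend.
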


%
%
%
%
%
%
%
\section{Rigidity of Ulrich modules}\label{sec:Rigidity of Ulrich modules}

In this section, we analyze the rigidity of Ulrich modules. We first recall the notion of rigidity.

\begin{definition}\label{defn:test-modules}{~}
	\begin{enumerate}[\rm (1)]
		\item
		Let $ m \ge 1 $ be an integer. An $R$-module $M$ is called $m$-Tor-rigid if for every $R$-module $N$, $\Tor_{n \le i < n+m}^R(M,N)=0$ for some $ n \ge 1 $ implies that $ \Tor_{i \ge n}^R(M,N) = 0 $, i.e., $m$ consecutive vanishing of Tor implies that the vanishing of all subsequent Tor. The $R$-module $M$ is called Tor-rigid if it is $1$-Tor-rigid \cite[Sec.~2]{Aus61}.
		\item
		\cite[Defn.~1.1]{CDT14} An $R$-module $M$ is called a test module if for every $R$-module $N$, $ \Tor_{i \gg 0}^R(M,N) = 0 $ implies that the projective dimension $ \pd_R(N) $ is finite.
		\item
		\cite[Defn.~2.3]{ZCMS18} An $R$-module $M$ is called rigid-test (resp., $m$-Tor-rigid-test) if it is both Tor-rigid (resp., $m$-Tor-rigid) and test module.
		\item
		\cite[Defn.~2.1]{DLM10} An $R$-module $M$ is called strongly rigid if for every $R$-module $N$, $\Tor_i^R(M,N)=0$ for some $i \ge 1$ implies that $\pd_R(N)< \infty$.
	\end{enumerate}
\end{definition}

\begin{example}\label{exam:test modules}{~}
	\begin{enumerate}[\rm (1)]
		\item \cite[Cor.~2.2]{Aus61}, \cite[Cor.~1 and Thm.~3]{Lic66} A celebrated result by Auslander and Lichtenbaum, known as rigidity theorem, says that modules over regular local rings, and modules of finite projective dimension over hypersurfaces are Tor-rigid.
		\item Let $I$ be an integrally closed $\fm$-primary ideal of $R$. Then $I$ is rigid-test as an $R$-module, see \cite[Cor.~3.3]{CHKV06}.
		\item \cite[Thm.~1.4]{CDT14} Over a complete intersection local ring, the test $R$-modules are precisely the $R$-modules of maximal complexity.
	\end{enumerate}
\end{example}

By definition, every rigid-test module is both Tor-rigid and strongly rigid module, while a Tor-rigid module need not be a rigid-test module, see \cite[Example~6.3]{ZCMS18}.

In order to study rigidity of Ulrich modules, we start with the following elementary lemma about consecutive vanishing of Ext and Tor.

\begin{lemma}\label{lem:cons-vanishing-Tor}
	Let ${\bf x} = x_1,\dots,x_t$ be an $M$-regular sequence. Let $m$ and $n$ be positive integers. Then the following statements hold true.
	\begin{enumerate}[\rm (1)]
		\item If $\Tor^R_i(M,N)=0$ for $n\le i\le m+n$, then $\Tor^R_i(M/{\bf x} M,N)=0$ for $n+t\le i\le m+n$.
		\item   If $\Ext^i_R(M,N)=0$ for $n\le i \le m+n$, then  $\Ext^i_R(M/{\bf x} M,N)$ for all $n+t\le i\le m+n$.		
		\item  If $\Ext^i(N,M)=0$ for all $n\le i\le m+n$, then $\Ext^i_R(N,M/{\bf x} M)=0$ for all $n\le i\le m+n-t$.   
	\end{enumerate} 
\end{lemma}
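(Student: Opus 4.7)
The plan is to induct on the length $t$ of the $M$-regular sequence ${\bf x}$, treating all three parts in parallel and reducing each to the case $t=1$.

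For the base case $t=1$, I will apply the standard long exact sequences obtained from the short exact sequence
\[
0 \to M \xrightarrow{x_1} M \to M/x_1 M \to 0
\]
by tensoring with $N$ and by applying $\Hom_R(-,N)$ or $\Hom_R(N,-)$. For (1), the fragment $\Tor^R_i(M,N) \to \Tor^R_i(M/x_1 M, N) \to \Tor^R_{i-1}(M,N)$ shows that simultaneous vanishing of $\Tor^R_{i-1}(M,N)$ and $\Tor^R_i(M,N)$ forces $\Tor^R_i(M/x_1 M, N)=0$, and so the hypothesis on $[n, m+n]$ yields vanishing on $[n+1, m+n]$. For (2), the fragment $\Ext^{i-1}_R(M,N) \to \Ext^i_R(M/x_1 M, N) \to \Ext^i_R(M,N)$ yields the same shifted range $[n+1, m+n]$. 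For (3), the fragment $\Ext^i_R(N,M) \to \Ext^i_R(N, M/x_1 M) \to \Ext^{i+1}_R(N,M)$ gives vanishing on $[n, m+n-1]$ instead, since this time both the source and the target of the connecting map live in the original hypothesis range.

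For the inductive step I will assume the lemma for regular sequences of length $t-1$ and apply the base case to $x_1$ first, producing the appropriate shifted vanishing ranges for $M/x_1 M$. Since $x_2, \ldots, x_t$ is a regular sequence on $M/x_1 M$ and $(M/x_1 M)/(x_2, \ldots, x_t)(M/x_1 M) \cong M/{\bf x} M$, I can apply the inductive hypothesis to $M/x_1 M$ with this shorter sequence; after $t$ such steps the range shrinks by one on the correct side at each stage, yielding $[n+t, m+n]$ in (1) and (2) and $[n, m+n-t]$ in (3). The argument is essentially mechanical; the only point requiring any attention is the bookkeeping to verify that the vanishing window contracts by exactly one on the correct side (the lower end in (1) and (2), the upper end in (3)) at each induction step. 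Beyond that routine check there is no real obstacle, since the lemma is a standard dimension-shifting reduction once the three long exact sequences above are written down.
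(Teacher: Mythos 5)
Your proposal is correct and follows essentially the same route as the paper: reduce to the case $t=1$ via the short exact sequence $0 \to M \xrightarrow{x_1} M \to M/x_1M \to 0$, read off the shifted vanishing windows from the three long exact sequences, and iterate along the regular sequence. The paper simply states "it is enough to prove each claim when $t=1$" and gives the same three base-case arguments, so your explicit induction is just a slightly more spelled-out version of the identical argument.
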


\begin{proof}
	It is enough to prove each of the claims when $t=1$. Let $x=x_1$. Consider the short exact sequence
	\begin{equation}\label{s.e.s}
		0 \longrightarrow M \stackrel{x \cdot}{\longrightarrow} M \longrightarrow M/xM \longrightarrow 0.
	\end{equation}
	
	(1) Applying $(-)\otimes_R N$ to \eqref{s.e.s}, there is an exact sequence
		$\Tor^R_{i}(M,N) \to \Tor^R_i(M/xM,N) \to \Tor^R_{i-1}(M,N)$
	for all $i\ge 0$. Hence it follows from the given vanishing condition that $\Tor^R_i(M/xM,N)=0$ for all $ n+1 \le i \le m+n $.
	
	(2) Applying $\Hom_R(-,N)$ to \eqref{s.e.s}, we get the exact sequence
		$\Ext^{i-1}_R(M,N) \to \Ext^i_R(M/xM,N) \to \Ext^i_R(M,N)$
	for all $i\ge 0$. Since $\Ext^i_R(M,N)=0$ for all $n\le i\le m+n$, from the exact sequence, we obtain that $\Ext^i_R(M/xM,N)$ for all $n+1\le i\le m+n$.
	
	(3) Applying $ \Hom_R(N,-) $ to \eqref{s.e.s}, there is an exact sequence $\Ext^i_R(N,M)\to \Ext^i_R(N,M/xM) \to \Ext^{i+1}_R(N,M)$ for all $i\ge 0$. Hence, since $ \Ext^i_R(N,M) = 0 $ for all $n\le i\le m+n$, it follows that $\Ext^i_R(N,M/xM)=0$ for all $n\le i\le m+n-1$.
\end{proof}

Next we show that Ulrich modules can be used as test modules, which detect the finiteness of projective dimension and injective dimension of arbitrary modules. Regarding the definition of Ulrich modules as in \cite[Defn.~2.1]{GTT15}, we give a quick proof of the standard inequality $e(M) \ge \mu(M)$ for any CM $R$-module $M$, which is probably known, but we are unable to find an explicit reference.

\begin{remark}\label{rmk:inequality}
	We may pass to a faithfully flat extension to assume that $R$ has infinite residue field. If $\dim M=0$, then $e(M)=\lambda_R(M)\ge \mu(M)$. So assume that $\dim M>0$. By \cite[4.6.10]{BH93}, $e(M)=e((\mathbf x),M)$ for some system of parameters $\mathbf x=x_1,...,x_s$ on $ M $, where $ s := \dim(M) $. Then, $\mathbf x$ is $M$-regular since $M$ is CM (\cite[2.1.2.(d)]{BH93}). Let $J=(\mathbf x)$.  Since $\mathbf x$ is an $M$-regular sequence, by \cite[1.1.8]{BH93}, we have
	\[
		J^nM/J^{n+1}M\cong (M/JM)^{\bigoplus \dfrac{(n+s-1)!}{n!(s-1)!}}.
	\]
	Hence the multiplicity of $M$ is given by
	\begin{align*}
	e(M) = e(J,M) &= (s-1)!\lim_{n\to\infty}\dfrac{\lambda_R(J^nM/J^{n+1}M)}{n^{s-1}} \\
	&= \lambda_R(M/JM) \ge \lambda_R(M/\fm M) = \mu(M).
	\end{align*}
\end{remark}

Proposition~\ref{prop:improvment-Ghosh-Puthen} considerably improves the results in \cite[Cor.~3.3 and Cor.~3.4]{GP19b}, all the while giving shorter, and more unified proofs. 

\begin{proposition}\label{prop:improvment-Ghosh-Puthen}
	Let $ M $ be an Ulrich $R$-module of dimension $s$. Let $L$ and $N$ be $R$-modules. Then the following hold true.
	\begin{enumerate}[\rm (1)]
		\item If there exists an integer $n\ge 1$ such that $ \Tor_i^R(M,N) = 0 $ for all $ n \le i \le n+s $, then $ \pd_R N < n+s $.
		\item If there exists an integer $n\ge 1$ such that $ \Ext^i_R(M,N)=0 $ for all $ n \le i \le n+s$, where $n+s\ge \depth_R N$, then $\id_R N<\infty$. 
		\item If there exists an integer $n\ge 1$ such that $ \Ext^i_R(L,M) = 0 $ for all $ n \le i \le n+s $, then $ \pd_R L < n $. 
	\end{enumerate} 
\end{proposition}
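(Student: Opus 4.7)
My plan is to reduce each of (1), (2), (3) to the classical vanishing criterion against the residue field $k$, by killing $M$ with a maximal $M$-regular sequence in $\fm$. After passing (by faithfully flat descent along $R \to R[X]_{\fm R[X]}$, which preserves the Ulrich property as well as $\depth$, $\pd$, and $\id$) to the case when $k$ is infinite, \cite[Prop.~2.2.(2)]{GTT15} supplies a system of parameters $\ux = x_1,\dots,x_s$ on $M$ with $\fm M = (\ux)M$. Cohen-Macaulayness of $M$ makes $\ux$ an $M$-regular sequence, so $M/\ux M$ is an $\fm$-annihilated module of length $\mu(M)$, hence $M/\ux M \cong k^{\oplus \mu(M)}$.

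With this reduction, parts (1) and (3) drop out of Lemma~\ref{lem:cons-vanishing-Tor} essentially for free. For (1), applying Lemma~\ref{lem:cons-vanishing-Tor}(1) to $\ux$ with $t = m = s$ converts the hypothesis $\Tor^R_i(M,N) = 0$ for $n \le i \le n+s$ into the single vanishing $\Tor^R_{n+s}(M/\ux M, N) = 0$, i.e.\ $\Tor^R_{n+s}(k,N) = 0$; this is the classical criterion for $\pd_R N \le n+s-1$. For (3), Lemma~\ref{lem:cons-vanishing-Tor}(3) symmetrically yields $\Ext^n_R(L, M/\ux M) = 0$, whence $\Ext^n_R(L,k) = 0$ and $\pd_R L < n$.

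The main obstacle is part (2). Lemma~\ref{lem:cons-vanishing-Tor}(2) only produces the single vanishing $\Ext^{n+s}_R(k,N) = 0$, which on its own does not force finite injective dimension. Here the hypothesis $n+s \ge \depth_R N$ is essential: I would then invoke the no-gap theorem for Bass numbers, which asserts that $\{i : \Ext^i_R(k,N) \ne 0\}$ coincides with the contiguous interval $[\depth_R N, \id_R N]$. The vanishing at level $n+s \ge \depth_R N$ therefore forces $\id_R N < n+s$, in particular $\id_R N < \infty$. This Bass-number input is the only non-formal ingredient in the argument; everything else is bookkeeping with the regular sequence $\ux$.
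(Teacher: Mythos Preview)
Your proof is correct and follows essentially the same approach as the paper: reduce to infinite residue field, use \cite[Prop.~2.2.(2)]{GTT15} to obtain an $M$-regular sequence $\ux$ with $M/\ux M$ a $k$-vector space, then apply Lemma~\ref{lem:cons-vanishing-Tor} to reduce each part to a single vanishing of $\Tor$ or $\Ext$ against $k$. The only cosmetic difference is that the paper treats $s=0$ separately and cites Roberts \cite[II.~Thm.~2]{Rob76} directly for part~(2), whereas you invoke the equivalent no-gap statement for Bass numbers and handle all $s$ uniformly.
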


\begin{proof}
	Without loss of generality, passing to the faithfully flat extension $R[X]_{\fm[X]}$, we may assume that $k$ is infinite (\cite[Prop.~2.2.(3)]{GTT15}), and hence by \cite[Prop.~2.2.(2)]{GTT15}, there is a sequence $ {\bf x} = x_1, \dots, x_s $ in $ \fm $ such that $ \fm M = {\bf x} M $, i.e., $M/{\bf x} M$ is a nonzero $k$-vector space. When $s=0$, then $M$ is a $k$-vector space and (1) and (3) are immediate, and (2) follows from \cite[II. Thm.~2]{Rob76}. Now let $s\ge 1$. Since $\bf x$ is a system of parameters for $M$, hence $\bf x$ is $M$-regular since $M$ is CM \cite[2.1.2.(d)]{BH93}.   
	
	(1) Let $n \ge 1$ be an integer such that $\Tor_i^R(M,N)=0$ for all $ n \le i \le n+s$. In view of Lemma~\ref{lem:cons-vanishing-Tor}(1), $\Tor_i^R(M/{\bf x} M,N)=0$ for $i=n+s$. Since $M/{\bf x} M$ is a nonzero $k$-vector space, it follows that $\Tor_{n+s}^R(k,N)=0$, hence $\pd_R N < n+s$. 
	
	(2) Let $n\ge 1$ be an integer such that $\Ext^i_R(M,N)=0$ for all $n\le i\le n+s$. Similar to (1), we get (applying Lemma \ref{lem:cons-vanishing-Tor}(2)) that $\Ext^{n+s}_R(k,N)=0$. If $\id_R N=+\infty$, then by \cite[II. Thm.~2]{Rob76}, we know that $\Ext^i_R(k,N)\ne 0$ for all $i\ge \depth_R N$. Since $s+n\ge \depth_R N$, we must have $\id_R N<\infty$.   
	
	(3) If there exists an integer $n\ge 1$ such that $\Ext^i_R(L,M)=0$ for all $n\le i\le n+s$, then due to Lemma~\ref{lem:cons-vanishing-Tor}(3), we obtain that $\Ext^i_R(L,k)=0$ for $i=n+s-s=n$, hence $ \pd_R L < n $.  
\end{proof} 

\begin{remark} Note that when $M$ is an MCM Ulrich module, (i.e. $s=d$) the condition $n+s\ge \depth_R N$ of Proposition~\ref{prop:improvment-Ghosh-Puthen}(2) is automatically satisfied, and we directly recover \cite[Cor.~3.4]{GP19b}.  For some related results on Tor-rigid-test properties of MCM Ulrich modules over local Cohen--Macaulay rings of positive dimension, also see \cite[Theorem 5.10, 5.13]{dk}.    
\end{remark}

As an immediate consequence of Proposition~\ref{prop:improvment-Ghosh-Puthen}(1), we get

\begin{corollary}\label{cor:Ulrich-mod-d+1-Tor-rigid}
 	Let $ M $ be an Ulrich $R$-module of dimension $s$. Then $M$ is $ (s+1) $-$\Tor$-rigid-test.
\end{corollary}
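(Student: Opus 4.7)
The plan is to deduce both parts of the claim, namely that $M$ is a test module and that $M$ is $(s+1)$-$\Tor$-rigid, as direct corollaries of Proposition~\ref{prop:improvment-Ghosh-Puthen}(1). That result already converts $s+1$ consecutive vanishings of $\Tor^R(M,-)$ into a sharp bound on projective dimension, and this is really all that is needed.

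For the test-module property, suppose $N$ is any $R$-module with $\Tor_{i \gg 0}^R(M,N)=0$. Pick any $n\ge 1$ large enough that $\Tor_i^R(M,N)=0$ for every $i\ge n$; in particular $\Tor_i^R(M,N)=0$ throughout the range $n \le i \le n+s$. Proposition~\ref{prop:improvment-Ghosh-Puthen}(1) then gives $\pd_R N < n+s < \infty$.

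For $(s+1)$-$\Tor$-rigidity, suppose $\Tor_i^R(M,N)=0$ for all $i$ in the range $n \le i \le n+s$ (which is $s+1$ consecutive values). Again Proposition~\ref{prop:improvment-Ghosh-Puthen}(1) forces $\pd_R N \le n+s-1$. Consequently $\Tor_i^R(X,N)=0$ for every $R$-module $X$ and every $i \ge n+s$, so in particular $\Tor_i^R(M,N)=0$ for all $i\ge n+s$. Splicing this with the assumed vanishing for $n \le i \le n+s$ yields $\Tor_i^R(M,N)=0$ for all $i\ge n$, which is exactly the rigidity conclusion.

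There is no real obstacle here; the whole argument is a bookkeeping exercise on the index range, and the only subtlety is to check that the ``$s+1$ consecutive vanishings'' in the definition of $(s+1)$-$\Tor$-rigidity match the range $n\le i\le n+s$ occurring in Proposition~\ref{prop:improvment-Ghosh-Puthen}(1), so that the proposition can be applied verbatim. Once that matching is observed, both statements follow in one line each.
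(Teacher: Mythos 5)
Your proposal is correct and is essentially identical to the paper's own proof: both deduce from Proposition~\ref{prop:improvment-Ghosh-Puthen}(1) that $s+1$ consecutive vanishings of $\Tor^R_i(M,N)$ force $\pd_R N < n+s$, which simultaneously gives the test property and the vanishing of all subsequent $\Tor$. The index bookkeeping you carry out (matching $n\le i<n+s+1$ with $n\le i\le n+s$) is the same as what the paper leaves implicit.
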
 

\begin{proof}
	For an $R$-module $N$, if $\Tor^R_i(M,N)=0$ for all $ n \le i \le n+s $ and for some $ n \ge 1 $, then $ \pd_R N < n+s $ by Proposition~\ref{prop:improvment-Ghosh-Puthen}(1), hence $\Tor^R_{i\ge n+s}(M,N)=0$.   
\end{proof}

Taking $n=1$ in Proposition~\ref{prop:improvment-Ghosh-Puthen}.(3), we get the following criteria for free modules.

\begin{corollary}
	Let $ M $ be an Ulrich $R$-module. For an $R$-module $L$, if $\Ext^i_R(L,M)=0$ for all $1\le i\le \dim (M)+1$, then $L$ is free.    
\end{corollary}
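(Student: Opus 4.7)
The plan is to invoke Proposition~\ref{prop:improvment-Ghosh-Puthen}(3) directly with $n=1$, as the preceding sentence in the excerpt already suggests. Setting $s:=\dim(M)$, the hypothesis $\Ext^i_R(L,M)=0$ for $1\le i\le \dim(M)+1$ is exactly the condition $\Ext^i_R(L,M)=0$ for $n\le i\le n+s$ in the proposition with $n=1$.

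First, I would apply Proposition~\ref{prop:improvment-Ghosh-Puthen}(3) to conclude $\pd_R L < n = 1$, i.e.\ $\pd_R L = 0$. Since we are working over a (commutative Noetherian) local ring and $L$ is finitely generated by the blanket convention of \ref{setup}, a module of projective dimension zero is projective, and a finitely generated projective module over a local ring is free. This finishes the proof.

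There is essentially no obstacle: the result is a one-line corollary whose content is entirely encoded in Proposition~\ref{prop:improvment-Ghosh-Puthen}(3) specialised to $n=1$. The only minor point worth mentioning (if desired) is the passage from $\pd_R L = 0$ to freeness, which uses only the local-ring hypothesis from \autoref{setup} together with the standing assumption that all modules are finitely generated; no additional argument about Ulrich structure, regular sequences, or vanishing is needed beyond what is already packaged into the cited proposition.
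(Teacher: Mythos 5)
Your proposal is correct and is exactly the paper's argument: the corollary is obtained by taking $n=1$ in Proposition~\ref{prop:improvment-Ghosh-Puthen}.(3), giving $\pd_R L<1$, and a finitely generated module of projective dimension zero over a local ring is free. Nothing further is needed.
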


We note here some concrete class of Ulrich modules (of dimension $1$) over any CM local ring of positive dimension.

\begin{remark}\label{rmk:Ulrich-mod-of-dim-1}
	Suppose that $R$ is CM of dimension $d \ge 1$.  Let $x_1,\ldots,x_{d-1}$ be an $R$-regular sequence, and set $N:=R/(x_1,\ldots,x_{d-1})$. Then, for all $n\gg 0$, the $R$-modules $\fm^n N$ are Ulrich of dimension $1$.
\end{remark}

\begin{proof}
	Without loss of generality, we may assume that $k$ is infinite. Clearly $N$ is a CM $R$-module of dimension $1$.  By \cite[4.6.10]{BH93}, there exists $x\in \fm$ such that $(x)$ is a reduction of $\fm$ with respect to $N$. So $x\fm^n N=\fm^{n+1}N=\fm(\fm^n N)$ for all $n\gg 0$. Since $\depth \fm^n N>0$ and $\dim \fm^n N\le \dim N=1$, the $R$-module $\fm^n N$ is CM of dimension $1$. Thus $\fm^n N$ is Ulrich by \cite[Prop.~2.2.(2)]{GTT15}.
\end{proof}

For every reduced Gorenstein local ring of dimension $1$ and of minimal multiplicity which is not an integral domain, we show that there exist Ulrich modules that are neither rigid-test nor strongly rigid. In particular, it ensures that an Ulrich $R$-module need not be $d$-$\Tor$-rigid where $d \;(= \dim(R)) \ge 1$.

\begin{theorem}\label{thm:exam-reduced-Gor-dim-1-min-mult}
	Suppose that $R$ is a reduced Gorenstein local ring of dimension $1$ and of minimal multiplicity which is not an integral domain. Then $|\Min(R)| > 1$. Consider a non-empty proper subset $S \subsetneq \Min(R)$, and the ideals
	\begin{center}
		$I := \cap\{ \fp : \fp \in S \}$ \; and \; $J := \cap\{ \fq : \fq \in \Min(R) \smallsetminus S \}$.
	\end{center}
	Then the following statements hold true.
	\begin{enumerate}[\rm (1)]
		\item $R/I$ and $R/J$ are MCM Ulrich $R$-modules.
		\item For every positive integer $ n $, there exists an Ulrich module $ M_n $ satisfying
		\begin{center}
			$ \Tor^R_n(R/I, M_n) = 0 $ \; and \; $ \Tor^R_{n+1}(R/I, M_n) \neq 0 $.
		\end{center}
		In particular, $M_n$, $R/I$ and $R/J$ are neither Tor-rigid nor strongly-rigid.
	\end{enumerate}
\end{theorem}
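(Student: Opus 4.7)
The plan is to split the argument into three stages: first force $|\Min(R)|=2$ with both $R/\fp_i$ being DVRs (which handles part (1)), then exploit the resulting node-like structure of $R$ to produce a period-$2$ minimal free resolution of $R/\fp_i$, and finally compute the relevant $\Tor$'s by hand to pick $M_n$ according to the parity of $n$.

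For the first stage, I would pass to the faithfully flat extension $R[T]_{\fm R[T]}$ to assume $k$ is infinite, and then pick $x\in \fm$ with $\fm^2=x\fm$ (which is a non-zero-divisor since $R$ is reduced). The containment $\fm/xR\subseteq \Soc(R/xR)$ (from $\fm(\fm/xR)\subseteq \fm^2/xR=x\fm/xR=0$), combined with $\dim_k(\fm/xR)=\edim R - 1$ (via the short exact sequence $0\to xR/x\fm\to \fm/x\fm\to \fm/xR\to 0$ and $xR/x\fm\cong k$) and the Gorenstein-type equality $\dim_k\Soc(R/xR)=1$, forces $\edim R\le 2$. Since $R$ is not regular, $\edim R=e(R)=2$, and additivity of multiplicity on the reduced ring gives $2=e(R)=\sum_{\fp\in \Min(R)} e(R/\fp)\ge |\Min(R)|\ge 2$; thus $|\Min(R)|=2$ and each $e(R/\fp_i)=1$, so each $R/\fp_i$ is a DVR. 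The only nonempty proper subsets of $\Min(R)=\{\fp_1,\fp_2\}$ are singletons, so $\{I,J\}=\{\fp_1,\fp_2\}$ and each $R/\fp_i$ is a cyclic $R$-module of multiplicity one, hence Ulrich and MCM.

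For the second stage I would identify $\fp_1$ via canonical modules. The $(R/\fp_2)$-module $\Hom_R(R/\fp_2,R)=\ann_R(\fp_2)=\fp_1$; the last equality uses reducedness: if $r\cdot \fp_2=0$ then lifting to $R\hookrightarrow R/\fp_1\oplus R/\fp_2$ and using that $R/\fp_1$ is a domain with the image of any $f\in \fp_2\setminus\fp_1$ nonzero forces $r\in \fp_1$. Since $R$ is Gorenstein of dimension one and $R/\fp_2$ is CM of the same dimension, $\Hom_R(R/\fp_2,R)$ is the canonical $(R/\fp_2)$-module. A DVR is Gorenstein, so its canonical module is free of rank one, whence $\fp_1\cong R/\fp_2$ as $R$-modules. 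In particular $\fp_1$ is cyclic, say $\fp_1=(f_1)$ with $\ann_R(f_1)=\fp_2$; symmetrically $\fp_2=(f_2)$ with $\ann_R(f_2)=\fp_1$. These facts yield the period-$2$ minimal free resolution
\[
\cdots \xrightarrow{f_2} R \xrightarrow{f_1} R \xrightarrow{f_2} R \xrightarrow{f_1} R \to R/\fp_1 \to 0,
\]
and a symmetric one for $R/\fp_2$. Consequently, for any $R$-module $N$ and $n\ge 1$, $\Tor_n^R(R/\fp_1,N)=(0:_N f_1)/f_2 N$ when $n$ is odd and $(0:_N f_2)/f_1 N$ when $n$ is even.

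Finally I would specialize to $N\in\{R/\fp_1,R/\fp_2\}$. Since each $R/\fp_i$ is a domain with $f_j\notin \fp_i$ for $j\ne i$, the quotients collapse and one obtains $\Tor_n^R(R/\fp_1,R/\fp_2)=0$ for $n$ odd and $=R/(\fp_1+\fp_2)$ for $n$ even, while $\Tor_n^R(R/\fp_1,R/\fp_1)=R/(\fp_1+\fp_2)$ for $n$ odd and $0$ for $n$ even; and $R/(\fp_1+\fp_2)\ne 0$ since $\fp_1+\fp_2\subseteq \fm\ne R$. Assuming WLOG $I=\fp_1$, the choice $M_n:=R/J$ for $n$ odd and $M_n:=R/I$ for $n$ even yields Ulrich modules with $\Tor_n^R(R/I,M_n)=0$ and $\Tor_{n+1}^R(R/I,M_n)=R/(\fp_1+\fp_2)\ne 0$. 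The ``in particular'' clause then follows from the $n=1$ case: $\Tor_1^R(R/I,R/J)=0$ while $\pd_R(R/J)=\infty$ (as $R/J$ is a non-free MCM module over a non-regular Gorenstein ring) shows $R/I$ is neither Tor-rigid nor strongly rigid; symmetric arguments handle $R/J$ and each $M_n$. The hard part will be the second stage — pinning down that $\fp_1\cong R/\fp_2$ so as to extract principal generators with the precise annihilator relations $\ann_R(f_i)=\fp_j$; once this structural identification is in place, the alternating Tor pattern is immediate.
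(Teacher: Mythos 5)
Your proposal is correct, but it takes a genuinely different route from the paper's. The paper never determines the structure of $R$: for (1) it notes that $R/I$ and $R/J$ are MCM with no free summand (being killed by nonzero ideals) and totally reflexive over the Gorenstein ring $R$, hence are syzygy modules, and invokes Kobayashi--Takahashi's result that such syzygies over a ring of minimal multiplicity are Ulrich; for (2) it computes only $\Tor_1^R(R/I,R/J)=(I\cap J)/IJ=0$, deduces $\Tor_2^R(R/I,R/J)\neq 0$ from Proposition~\ref{prop:improvment-Ghosh-Puthen}(1) (since both modules have infinite projective dimension), and manufactures $M_n$ as an $(n-1)$-th cosyzygy of the totally reflexive module $R/J$, so that $\Tor_n(R/I,M_n)\cong\Tor_1(R/I,R/J)$ and $\Tor_{n+1}(R/I,M_n)\cong\Tor_2(R/I,R/J)$. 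Your argument instead pins down the ring completely: minimal multiplicity plus Gorenstein forces $\embdim R=2$, $e(R)=2$, $|\Min(R)|=2$ with both branches DVRs, and the canonical-module identification $\fp_1\cong R/\fp_2$ yields the explicit period-two resolutions and the full alternating $\Tor$ pattern, so $M_n$ can be taken to be $R/I$ or $R/J$ according to parity. What your approach buys is complete explicitness (all $\Tor$ modules are computed as $R/(\fp_1+\fp_2)$, and one sees that $S$ must be a singleton); what the paper's buys is brevity and reuse of its own machinery, and it avoids the structural analysis entirely. In fact the two constructions of $M_n$ agree up to free summands, since by your periodicity the cosyzygies of $R/\fp_2$ alternate between $R/\fp_1$ and $R/\fp_2$. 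All the delicate steps in your write-up check out: $x$ is a parameter hence a nonzerodivisor, the additivity of multiplicity applies because the reduced CM ring $R$ is equidimensional with $R_\fp$ a field at each minimal prime, a one-dimensional CM local domain of multiplicity one is a DVR, and $\ann_R(\fp_2)=\fp_1$ follows from reducedness exactly as you argue.
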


\begin{example}
	The ring $R = k[[x,y]]/\langle x^2 - y^{2m} \rangle$ for every integer $ m \ge 1 $ satisfies the hypotheses of Theorem~\ref{thm:exam-reduced-Gor-dim-1-min-mult}, where $k$ is a field such that $\Char(k) \neq 2$. Note that $e(R) = 2 = \embdim(R) - \dim(R) + 1$, i.e., $R$ has minimal multiplicity.
\end{example}

In order to prove Theorem~\ref{thm:exam-reduced-Gor-dim-1-min-mult}, we need a few lemmas.

\begin{lemma}\label{1}
	Suppose that $\dim(R) \ge 1$. Let $ I \subsetneq \fm $ be a non-maximal radical ideal of $R$. Then, $ \depth(R/I) > 0 $.  
\end{lemma}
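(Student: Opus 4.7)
The plan is to show that $\fm$ is not an associated prime of $R/I$, which in a Noetherian local ring is equivalent to $\depth(R/I)>0$. The key observation will be that since $I$ is a radical ideal, $R/I$ is a reduced Noetherian ring, and for reduced Noetherian rings the associated primes coincide with the minimal primes (every zero divisor lies in a minimal prime, and the minimal primes are automatically associated).

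Given that, I would argue that $\fm$ cannot be a minimal prime over $I$. Indeed, since $I$ is radical, we have
\[
I \;=\; \sqrt{I} \;=\; \bigcap_{\fp \in \Min(V(I))} \fp.
\]
If $\fm$ were the unique minimal prime containing $I$, this intersection would just be $\fm$, forcing $I=\fm$ and contradicting $I\subsetneq \fm$. Hence there exists some minimal prime $\fp$ over $I$ with $\fp \neq \fm$, and primality then forces $\fp\subsetneq \fm$. This strict containment shows that $\fm$ is not a minimal element of $V(I)$, i.e., $\fm/I \notin \Min(R/I)$.

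Combining the two steps, $\fm/I \notin \Min(R/I) = \Ass(R/I)$, so $\fm$ contains a nonzerodivisor on $R/I$, and therefore $\depth_R(R/I)\ge 1$. The only conceptual point one needs to be careful about is justifying $\Ass=\Min$ for the reduced ring $R/I$; this is standard (see, e.g., \cite[1.2.21]{BH93} or the prime avoidance / minimal primes argument), and no deeper machinery from the paper is needed. There is no real obstacle here — this is an elementary commutative algebra fact that the hypothesis $\dim R \ge 1$ is only used implicitly through $I\subsetneq \fm$ producing a nontrivial minimal prime below $\fm$.
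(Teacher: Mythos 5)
Your proof is correct and follows essentially the same route as the paper: use that $I$ radical makes $R/I$ reduced so that $\Ass(R/I)\subseteq\Min(R/I)$, observe that $I\subsetneq\fm$ forces $\fm\notin\Min(R/I)$, and conclude $\fm\notin\Ass(R/I)$, i.e.\ $\depth(R/I)>0$. The paper's proof is just a compressed version of exactly this argument.
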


\begin{proof}
	Since $I$ is a non-maximal radical ideal, it follows that $ I = \bigcap_{\fp \in \Min(R/I)} \fp $ and $ \fm \notin \Min(R/I) $. Thus, $ \Ass(R/I) \subseteq \Min(R/I) $, and $ \fm \notin \Ass(R/I) $. Therefore $ \depth(R/I) \ne 0 $.   
\end{proof}

\begin{para}\label{para:reduced-nonzero-int}
	Let $S \subsetneq \Min(R)$ be a non-empty proper subset. If $R$ is a reduced local ring, then $\bigcap_{\fp \in S}\fp \ne 0$. Indeed, if $\bigcap_{\fp \in S}\fp = 0$, then for $ \fq \in \Min(R) \smallsetminus S $, we would have that $ \prod_{ \fp \in S } \fp \subseteq \bigcap_{ \fp \in S } \fp = 0 \subseteq \fq $, hence $ \fp \subseteq \fq $ for some $ \fp \in S $, which implies that $\fp=\fq\in \Min(R)\smallsetminus S$, a contradiction! This observation facilitates the next lemma.
\end{para}

\begin{lemma}\label{2}
	Suppose that $R$ is a reduced local ring which is not an integral domain. Then $|\Min(R)|>1$, and for any non-empty proper subset $ S \subsetneq \Min(R)$, denoting $ I := \bigcap_{ \fp \in S } \fp $ and $ J := \bigcap_{\fq \in \Min(R) \smallsetminus S} \fq $, it holds that $I$ and $J$ are nonzero proper ideals. Moreover, $R/I$ and $R/J$ have positive depth, and $I \cap J=0$.
\end{lemma}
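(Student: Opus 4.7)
The plan is to verify each sub-claim of the lemma in turn: $|\Min(R)|>1$, nonzero, proper, positive depth of the quotients, and $I\cap J=0$. The tools available are the reducedness identity $\sqrt{0}=\bigcap_{\fp\in\Min(R)}\fp=0$, the preceding Paragraph~\ref{para:reduced-nonzero-int}, and Lemma~\ref{1}. None of the steps individually is deep, but the depth claim requires a small non-maximality verification.

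For $|\Min(R)|>1$, I would argue by contraposition: if $\Min(R)=\{\fp\}$, then $\fp=\sqrt{0}=0$, so $R$ is a domain, contradicting the hypothesis. The same reasoning excludes $\dim R=0$, for in that case $\fm$ would be the unique prime and hence the only minimal prime, again forcing $|\Min(R)|=1$. Thus I may assume $\dim R\ge 1$, which is exactly the running hypothesis of Lemma~\ref{1}. For $I\ne 0$, I would invoke Paragraph~\ref{para:reduced-nonzero-int} directly; applying the same paragraph to the non-empty proper subset $\Min(R)\smallsetminus S$ gives $J\ne 0$. Properness of $I$ and $J$ is automatic since each is contained in some minimal prime, and every minimal prime is a proper ideal.

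For positive depth of $R/I$ and $R/J$, I would apply Lemma~\ref{1} to the ideals $I$ and $J$, which are radical as intersections of primes. The main (mild) obstacle is confirming that $I$ and $J$ are non-maximal. This follows from the observation that in a local Noetherian ring, if $\fm\in\Min(R)$ then any other prime is strictly below $\fm$ and therefore $\fm$ itself fails to be minimal; so $\fm\in\Min(R)$ would force $|\Min(R)|=1$. Since we have already established $|\Min(R)|>1$, no minimal prime of $R$ coincides with $\fm$. Choosing any $\fp\in S$ then yields $I\subseteq\fp\subsetneq\fm$, and the analogous inclusion holds for $J$. Hence Lemma~\ref{1} applies and gives $\depth(R/I),\depth(R/J)>0$. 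Finally, reducedness of $R$ gives
\[
I\cap J \;=\; \Bigl(\bigcap_{\fp\in S}\fp\Bigr)\cap\Bigl(\bigcap_{\fq\in\Min(R)\smallsetminus S}\fq\Bigr) \;=\; \bigcap_{\fp\in\Min(R)}\fp \;=\; \sqrt{0} \;=\; 0,
\]
completing the proof. Everything after the non-maximality check is routine bookkeeping.
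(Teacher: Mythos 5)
Your proof is correct and follows the same route as the paper: Paragraph~\ref{para:reduced-nonzero-int} for $I,J\ne 0$, Lemma~\ref{1} for positive depth, and $\sqrt{0}=0$ for both $|\Min(R)|>1$ and $I\cap J=0$. You are merely more explicit than the paper in justifying $\dim R\ge 1$ and $\fm\notin\Min(R)$, which the paper asserts without comment; this is a welcome but inessential elaboration.
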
    

\begin{proof}
	Since $R$ is reduced, $ \bigcap_{\fp \in \Min(R)} \fp = 0 $, and hence $ |\Min(R)| > 1 $ as $R$ is not an integral domain.  Since $S$ and $ \Min(R) \smallsetminus S $ are non-empty proper subsets of  $\Min(R)$, it follows from \ref{para:reduced-nonzero-int} that $\bigcap_{\fp \in S} \fp \ne 0$ and $\bigcap_{\fq\in \Min(R)\smallsetminus S } \fq\ne 0$.  Since $\fm \notin \Min(R)$, the ideals $I$ and $J$ as defined in the statement are non-maximal radical ideals. So $R/I$ and $R/J$ have positive depth by Lemma~\ref{1}. Moreover, $ I \cap J = \bigcap_{\fp \in \Min(R)} \fp = 0 $.   
\end{proof}

\begin{proof}[Proof of Theorem~\ref{thm:exam-reduced-Gor-dim-1-min-mult}]
	(1) In view of Lemma~\ref{2}, the ideals $I$ and $J$ are nonzero proper ideals, and the $R$-modules $R/I$ and $R/J$ are MCM. Therefore, since $R$ is Gorenstein, both $R/I$ and $R/J$ are totally reflexive $R$-modules, hence are $l$-th syzygy modules for any $ l \ge 1 $. Since $R/I$ is annihilated by the nonzero ideal $I$, the $R$-module $R/I$ has no nonzero free summand. So, by \cite[Prop.~3.6]{umm}, $ R/I $ is an Ulrich $R$-module. For the same reason, $ R/J $ is also an Ulrich $R$-module.
	
	(2) By Lemma~\ref{2}, $ I \cap J = 0 $. So $ \Tor_1^R(R/I,R/J) \cong (I\cap J)/IJ=0$. Since $R/I$ and $R/J$ are non-free MCM modules, $\pd_R(R/I) = \pd_R(R/J) = \infty$. Therefore it follows from Proposition~\ref{prop:improvment-Ghosh-Puthen}.(1) that $ \Tor_2^R(R/I, R/J) \neq 0 $. From the discussion made in (1), since $R/J$ is totally reflexive, for every $n\ge 1$, there exists $R$-module $M_n$ such that $R/J\cong \syz^R_{n-1} M_n$. It follows that
	\begin{align*}
		& \Tor^R_n(R/I, M_n) \cong \Tor^R_1(R/I, \Omega_{n-1}^R(M_n)) = \Tor^R_1(R/I, R/J) = 0 \; \mbox{ and}\\
		& \Tor^R_{n+1}(R/I, M_n) \cong \Tor^R_2(R/I, \Omega_{n-1}^R(M_n)) = \Tor^R_2(R/I, R/J) \neq 0.
	\end{align*} 
	Note that $ \pd_R(M_n) = \infty $ as $ \pd_R(R/J) = \infty $. Hence $R/I$ and $M_n$ are neither $\Tor$-rigid nor strongly-rigid. For the same reason, $R/J$ is neither $\Tor$-rigid nor strongly-rigid. Since $R$ has minimal multiplicity, from the construction of $M_n$, one obtains that $M_n$ is Ulrich.
\end{proof}

The next proposition gives a class of Ulrich modules that are rigid-test over CM local rings of dimension $1$.

\begin{proposition}\label{prop:Ulrich-CM-dim-1-rigid-test}
	Let $R$ be a CM local ring of dimension $ 1 $, and $I$ be an ideal of positive height. If $I$ is an Ulrich $R$-module, then $I$ is rigid-test $($hence strongly rigid$)$.
\end{proposition}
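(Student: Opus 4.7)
The plan has three parts. First, since $I$ has positive height in a $1$-dimensional CM local ring, $I$ contains a non-zero-divisor and hence is MCM (of dimension $1$); by Corollary~\ref{cor:Ulrich-mod-d+1-Tor-rigid} applied with $s=1$, $I$ is $2$-$\Tor$-rigid-test, which already gives the ``test'' (and hence ``strongly rigid'' once we have $\Tor$-rigidity) half. So the whole task is to upgrade $2$-$\Tor$-rigidity to ordinary $\Tor$-rigidity.

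Second, I would carry out a reduction that uses $\dim R = 1$: the Auslander--Buchsbaum formula forces $\pd_R N \in \{0,1,\infty\}$ for every finitely generated $R$-module $N$, and therefore it suffices to prove the following single-vanishing criterion:
\emph{for every $R$-module $N$ and every $n \ge 1$, $\Tor_n^R(I,N)=0$ implies $\pd_R N <\infty$.}
Indeed, such a conclusion forces $\pd_R N \le 1$, hence $\Tor_i^R(I,N)=0$ for all $i\ge 2$, and combined with the hypothesis at index $n$ this yields $\Tor_i^R(I,N)=0$ for every $i\ge n$.

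Third, to prove the single-vanishing criterion I would exploit the Ulrich identity $\fm I = xI$ for some $R$-regular $x\in\fm$, through the short exact sequence
\[
0 \to I \xrightarrow{\cdot x} I \to I/xI \to 0, \qquad I/xI \cong k^{\mu(I)}.
\]
Tensoring with $N$ and reading the long exact sequence of $\Tor$ across index $n$, the assumption $\Tor_n^R(I,N)=0$ yields an injection
\[
\Tor_n^R(k,N)^{\mu(I)} \hookrightarrow \Tor_{n-1}^R(I,N)[x]
\]
into the $x$-torsion submodule. If this $x$-torsion vanishes, then $\Tor_n^R(k,N)=0$, so $\pd_R N < n < \infty$, and we are done. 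The extra ingredient that vanishes the $x$-torsion is the fact that $I$ is an ideal: from $0\to I\to R\to R/I\to 0$ one has the shift $\Tor_{n-1}^R(I,N)\cong \Tor_n^R(R/I,N)$ for $n\ge 2$, and the four-term sequence $0\to \Tor_1^R(R/I,N)\to I\otimes_R N \to N \to R/I\otimes_R N \to 0$ for $n=1$.

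The main obstacle is executing the $x$-torsion killing in the case $n=1$, where one has to show $(I\otimes_R N)[x]=0$ directly from $\Tor_1^R(I,N)=0$. The expected route is to first reduce to $N$ having positive depth (and thus being $x$-torsion-free as an $R$-module) by splitting off the $\fm$-torsion part of $N$, and then to combine the ideal short exact sequence with the Ulrich short exact sequence in a diagram chase to inject $(I\otimes_R N)[x]$ into $N[x]=0$. For $n\ge 2$, the same diagram chase is applied after shifting to $R/I$, where the finite length of $R/I$ (forced by $I$ being $\fm$-primary) simplifies the torsion bookkeeping.
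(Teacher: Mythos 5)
Your reduction is sound as far as it goes: since $\dim R=1$, Auslander--Buchsbaum does reduce everything to the single-vanishing criterion ``$\Tor_n^R(I,N)=0 \Rightarrow \pd_R N<\infty$'', and your long-exact-sequence computation correctly yields $\Tor_n^R(k,N)^{\mu(I)}\cong \Tor_{n-1}^R(I,N)[x]$ when $\Tor_n^R(I,N)=0$. The gap is in the step you yourself flag as the main obstacle, and it is fatal as proposed. For $n\ge 2$ you shift to $\Tor_{n-1}^R(I,N)\cong\Tor_n^R(R/I,N)$, which is annihilated by the $\fm$-primary ideal $I$ and hence has finite length; but a \emph{nonzero} finite-length module $T$ over a local ring never has $T[x]=0$ for $x\in\fm$ (multiplication by $x$ is not surjective by Nakayama, hence not injective by length count). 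So ``killing the $x$-torsion'' of $\Tor_{n-1}^R(I,N)$ is literally equivalent to proving $\Tor_{n-1}^R(I,N)=0$ --- a \emph{descending} vanishing that does not follow from $\Tor_n^R(I,N)=0$ by any diagram chase and is at least as strong as the rigidity you are trying to establish. The finite length of $R/I$ does not ``simplify the bookkeeping''; it makes the torsion-vanishing you need impossible except in the trivial case. The same problem occurs for $n=1$: after reducing to $x$ being $N$-regular you are left needing $\Tor_1^R(R/I,N)=0$, i.e.\ injectivity of $I\otimes_R N\to N$, which is not a consequence of $\Tor_1^R(I,N)=0$.

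What the argument is missing is the actual mechanism the paper uses. From $\fm I=xI$ with $x$ an $R$-regular element one deduces the weak $\fm$-fullness of $I$, namely $I=(\fm I:_R\fm)$, via $I\subseteq(\fm I:_R\fm)=(xI:_R\fm)\subseteq(xI:_R x)\subseteq I$; together with $I$ being $\fm$-primary, this places $I$ in the scope of \cite[Thm.~2.10]{CGTT19}, which asserts precisely that a single vanishing $\Tor_{n+1}^R(M,R/I)\cong\Tor_n^R(M,I)=0$ forces $\pd_R M\le n$. That theorem (a descendant of the Corso--Huneke--Katz--Vasconcelos result for integrally closed ideals) is proved by a delicate analysis of minimal free resolutions using the colon condition, not by the short exact sequence $0\to I\xrightarrow{x}I\to k^{\mu(I)}\to 0$ alone. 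To complete your proof you would either need to import that result or reproduce its argument; the Ulrich structure by itself, fed only through the long exact sequence of $\Tor$, does not suffice.
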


\begin{proof}
	We show that if $n\ge 1$ is an integer, and $M$ is an $R$-module such that $\Tor^R_n(M,I)=0$, then $\pd_R M\le n$. Passing to the faithfully flat extension $R[X]_{\fm[X]}$, if necessary, we may assume that the residue field $k$ is infinite. Since $I$ is an Ulrich $R$-module, $\fm I=xI$ for some $R$-regular element $x\in \fm$. Therefore
	\begin{center}
		$ I \subseteq (\fm I:_R \fm)= (xI:_R \fm)\subseteq (xI:_R x) \subseteq I $,
	\end{center}
	where the last inclusion holds because $x$ is $R$-regular. Thus $I=(\fm I:_R \fm)$. Since $I$ has positive height, it is also $\fm$-primary. Note that $ \Tor^R_{n+1}(M, R/I) \cong \Tor^R_n(M, I) = 0 $. So it follows from \cite[Defn.~2.1, Thm.~2.10]{CGTT19} that $ \pd_R M \le n $, and we are done.    
\end{proof}

\begin{remark}\label{rmk:counterexample}
	Over a CM local ring of dimension $1$, a nonzero ideal $I$ of height $0$ which is MCM Ulrich as an $R$-module need not be rigid-test, see Example~\ref{exam:hypersurface-dim-1-Ext-Tor}.
\end{remark}

Finally, as consequences of Proposition~\ref{prop:improvment-Ghosh-Puthen}, we give some characterizations of regular and Gorenstein local rings in terms of Ulrich modules.

\begin{proposition}\label{prop:charac-regular-by-Ulrich-mod}
	Let $M$ and $N$ be Ulrich $R$-modules. $($Possibly, $M=N$$)$. Then
	\begin{enumerate}[\rm (1)]
		\item $R$ is regular $\Longleftrightarrow$ $\pd_R(M) < \infty$.
		\item $R$ is regular $\Longleftrightarrow$ $\id_R(N) < \infty$.
		\item Set $s:=\min\{\dim(M),\dim(N)\}$. The following statements are equivalent:
		\begin{enumerate}[\rm (a)]
			\item $R$ is regular.
			\item $\Ext_R^{n \le i \le n+s}(M,N)=0$ for some $n\ge 1$ with $n+\dim(M)\ge \depth_R N$.
			\item $\Tor^R_{n \le i \le n+s}(M,N)=0$ for some $n\ge 1$.
		\end{enumerate}
	\end{enumerate}
\end{proposition}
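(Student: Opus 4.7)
The plan is to establish (1) and (2) first, then bootstrap to (3) via a simple dichotomy on dimensions. Both forward directions are standard: over a regular local ring, every finitely generated module has finite projective dimension, and (since a regular ring is Gorenstein) also finite injective dimension. The nontrivial content is the converse of each part, and all three converses are reduced to Proposition~\ref{prop:improvment-Ghosh-Puthen} applied with the residue field $k$ in the ``other'' slot.

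For (1)$\Leftarrow$: if $\pd_R M<\infty$, pick $n>\pd_R M$; then $\Tor^R_i(M,k)=0$ for $n\le i\le n+\dim(M)$, so Proposition~\ref{prop:improvment-Ghosh-Puthen}(1) applied with the Ulrich module $M$ and $N=k$ yields $\pd_R k<\infty$, i.e., $R$ is regular. For (2)$\Leftarrow$: if $\id_R N<\infty$, take $n:=\id_R N+1$; then $\Ext^i_R(k,N)=0$ for $n\le i\le n+\dim(N)$, so Proposition~\ref{prop:improvment-Ghosh-Puthen}(3) with $L=k$ and Ulrich module $N$ of dimension $s=\dim(N)$ gives $\pd_R k<n$, so $R$ is regular.

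For (3), the implications (a)$\Rightarrow$(b),(c) are immediate: if $R$ is regular then $\pd_R M<\infty$, so for any $n>\pd_R M$ (and, in (b), also $n+\dim(M)\ge \depth_R N$, which is possible since $\depth_R N$ is finite), all Ext and Tor in the required ranges vanish. The substantive implications are (b)$\Rightarrow$(a) and (c)$\Rightarrow$(a), handled by case analysis on which of $\dim(M)$, $\dim(N)$ attains the minimum $s$. If $\dim(M)\le\dim(N)$, then $s=\dim(M)$ and the vanishing interval has length $\dim(M)+1$: apply Proposition~\ref{prop:improvment-Ghosh-Puthen}(2) in case (b)---where the side hypothesis $n+\dim(M)\ge \depth_R N$ lines up exactly with the one built into (b)---to conclude $\id_R N<\infty$, then invoke part (2); in case (c), apply Proposition~\ref{prop:improvment-Ghosh-Puthen}(1) to conclude $\pd_R N<\infty$ and invoke part (1) applied to the Ulrich module $N$. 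If instead $\dim(N)<\dim(M)$, then $s=\dim(N)$, so the Ulrich module we use sits in the other slot: for (b), apply Proposition~\ref{prop:improvment-Ghosh-Puthen}(3) with $L=M$ and Ulrich $N$ to obtain $\pd_R M<\infty$; for (c), use symmetry of Tor and Proposition~\ref{prop:improvment-Ghosh-Puthen}(1) with the Ulrich module $N$ to conclude $\pd_R M<\infty$. Either way part (1) finishes.

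The only real obstacle is the bookkeeping: matching the minimum $s$ to the correct Ulrich module so that the length of the vanishing interval is compatible with the statement of Proposition~\ref{prop:improvment-Ghosh-Puthen}, and verifying that the depth condition $n+\dim(M)\ge \depth_R N$ in (b) is precisely what Proposition~\ref{prop:improvment-Ghosh-Puthen}(2) demands. No new homological input beyond that proposition and (1),(2) is needed.
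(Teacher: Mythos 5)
Your proof is correct and follows essentially the same route as the paper: the forward directions are standard, and all converses reduce to Proposition~\ref{prop:improvment-Ghosh-Puthen}, with part (3) handled by the same dichotomy on which of $\dim(M)$, $\dim(N)$ realizes $s$. The only (immaterial) deviation is in the converse of (1), where you use $\Tor^R_{\gg 0}(M,k)=0$ and Proposition~\ref{prop:improvment-Ghosh-Puthen}(1) to get $\pd_R k<\infty$, while the paper uses $\Ext_R^{\gg 0}(M,k)=0$ and Proposition~\ref{prop:improvment-Ghosh-Puthen}(2) to get $\id_R k<\infty$; both are valid.
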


\begin{proof}
	(1) and (2): If $R$ is regular, then both $\pd_R(M)$ and $\id_R(N)$ are finite. For the reverse implications, let $\pd_R(M) < \infty$. Then $\Ext_R^{\gg 0}(M,k)=0$. It follows from Proposition~\ref{prop:improvment-Ghosh-Puthen}.(2) that $\id_R(k)$ is finite, hence $R$ is regular. For the second part, let $\id_R(N) < \infty$. Then $\Ext_R^{\gg 0}(k,N)=0$. By Proposition~\ref{prop:improvment-Ghosh-Puthen}.(3), this yields that $ \pd_R(k) $ is finite, hence $R$ is regular.
	
	(3) In view of Proposition~\ref{prop:improvment-Ghosh-Puthen}.(2) and (3), if $\Ext_R^{n \le i \le n+s}(M,N)=0$ for some $n\ge 1$, then $\pd_R (M) < \infty$ or $ \id_R(N) < \infty $ depending on whether $s=\dim(N)$ or $s=\dim(M)$ respectively. Hence, by (1) and (2), $R$ is regular. If $\Tor^R_{n \le i \le n+s}(M,N)=0$ for some $n\ge 1$, then by Proposition~\ref{prop:improvment-Ghosh-Puthen}.(1), $\pd_R (M) < \infty$ or $ \pd_R(N) < \infty $, hence (1) yields that $R$ is regular.
\end{proof}    

The example below shows that the number of consecutive vanishing of Ext or Tor in \ref{prop:charac-regular-by-Ulrich-mod}.(3) cannot be further improved (even for hypersurface of dimension $1$).

\begin{example}\label{exam:hypersurface-dim-1-Ext-Tor}
	Consider the local hypersurface $R = k[[x,y]]/(xy)$, where $k$ is a field, and $x,y$ are indeterminates.
	Set $M :=Rx$.
	Note that $M = R/\langle y \rangle$, which is an MCM $R$-module. Since $e(M) = 1 = \mu(M)$, the $R$-module $M$ is Ulrich. From the minimal free resolution
	\begin{equation}\label{reso-Rx}
	\cdots \stackrel{x\cdot}{\longrightarrow} R \stackrel{y\cdot}{\longrightarrow} R \stackrel{x\cdot}{\longrightarrow} R \stackrel{y\cdot}{\longrightarrow} R \longrightarrow 0
	\end{equation}
	of $M$, one computes that
	\begin{align*}
	&\Tor_{2n+1}^R(M,M) = (x)/(x^2) \neq 0 \mbox{ for all } n \ge 0,\\
	&\Tor_{2n}^R(M,M) = 0 \mbox{ for all } n \ge 1,\\
	&\Ext^{2n+1}_R(M,M) = 0 \mbox{ for all } n \ge 0, \mbox{ and
	}\\
	&\Ext^{2n}_R(M,M) = (x)/(x^2) \neq 0 \mbox{ for all } n \ge 1.
	\end{align*}
	Here $\dim(R)=1$, and $R$ is not regular.
	
	Note the ideal $I := Rx$ has height $0$, and it is MCM Ulrich as an $R$-module, but $I$ is not a rigid-test $R$-module. Thus Remark~\ref{rmk:counterexample} is verified.
\end{example}

Ulrich modules can also be used to characterize Gorenstein local rings. In \cite[Cor.~3.2]{GP19b}, it is shown that for an MCM Ulrich module $M$ over a CM local ring $R$, the base ring is Gorenstein \iff $\Ext_R^{n \le i \le n+d}(M,R)=0$ for some $n\ge 1$. More generally, we have (1) $\Leftrightarrow$ (3) in Proposition~\ref{prop:charac-Gor-by-Ulrich-mod} below.

\begin{proposition}\label{prop:charac-Gor-by-Ulrich-mod}
	Let $M$ be an Ulrich $R$-module of dimension $s$. Then the following are equivalent:
	\begin{enumerate}[\rm (1)]
		\item $R$ is Gorenstein.
		\item $\gdim_R(M) < \infty$.
		\item $\Ext_R^{n \le i \le n+s}(M,P)=0$ for some $n \ge 1$ with $n+s\ge \depth_R P$ and for some nonzero $R$-module $P$ with $\pd_R P < \infty$.  
		\item $\Ext_R^{n \le i \le n+s}(Q,M)=0$ for some $n \ge 1$ and for some nonzero $R$-module $Q$ with $\id_R Q < \infty$.
		\item $\Tor^R_{n \le i \le n+s}(M,Q)=0$ for some $n \ge 1$ and for some nonzero $R$-module $Q$ with $\id_R Q < \infty$.
	\end{enumerate}
\end{proposition}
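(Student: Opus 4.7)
The overall plan is a loop $(1) \Rightarrow (2) \Rightarrow (1)$ together with, for each $j \in \{3,4,5\}$, the pair $(1) \Rightarrow (j)$ (handled trivially by taking $P=R$ or $Q=R$) and $(j) \Rightarrow (1)$. Every implication $(j) \Rightarrow (1)$ will go by first invoking Proposition~\ref{prop:improvment-Ghosh-Puthen} to promote the given consecutive vanishing into finite projective or injective dimension of the test module $P$ or $Q$, and then applying Foxby's classical theorem that a local ring admitting a nonzero finitely generated module of simultaneously finite projective and finite injective dimension must be Gorenstein.

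For $(1) \Rightarrow (2)$, I would quote the standard fact that every finitely generated module over a Gorenstein local ring has finite Gorenstein dimension. For $(2) \Rightarrow (1)$, the plan is to apply Proposition~\ref{prop:improvment-Ghosh-Puthen}(2) with $N = R$. The Auslander--Bridger formula gives $\gdim_R(M) = \depth R - s$, so $\depth R \ge s$ and $n := \depth R - s + 1 \ge 1$. Since $\Ext^i_R(M,R) = 0$ for all $i > \gdim_R(M)$, the vanishing $\Ext_R^{n \le i \le n+s}(M,R) = 0$ holds, and $n + s = \depth R + 1 \ge \depth_R R$, so Proposition~\ref{prop:improvment-Ghosh-Puthen}(2) yields $\id_R R < \infty$, i.e., $R$ is Gorenstein.

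For $(1) \Rightarrow (3), (4), (5)$: if $R$ is Gorenstein then $\id_R R = \dim R < \infty$ and $\pd_R R = 0$, so the choice $P = Q = R$ makes every Ext and Tor appearing in $(3), (4), (5)$ vanish for all $n \ge 1$. The converses will be handled uniformly: $(3) \Rightarrow (1)$ uses Proposition~\ref{prop:improvment-Ghosh-Puthen}(2) on $N = P$ to produce $\id_R P < \infty$, then Foxby; $(4) \Rightarrow (1)$ uses Proposition~\ref{prop:improvment-Ghosh-Puthen}(3) on $L = Q$ to produce $\pd_R Q < n < \infty$, then Foxby; $(5) \Rightarrow (1)$ uses Proposition~\ref{prop:improvment-Ghosh-Puthen}(1) on $N = Q$ to produce $\pd_R Q < n+s < \infty$, then Foxby. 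The only delicate bookkeeping, and the step I expect to be the main obstacle, is $(2) \Rightarrow (1)$: one must verify simultaneously that $n \ge 1$ and that the depth inequality $n + s \ge \depth_R R$ required for Proposition~\ref{prop:improvment-Ghosh-Puthen}(2) is satisfied, and both follow cleanly once one has used Auslander--Bridger to see that $\gdim_R(M) < \infty$ forces $\depth R \ge s$.
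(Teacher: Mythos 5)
Your architecture is essentially the paper's: the same use of Proposition~\ref{prop:improvment-Ghosh-Puthen} to upgrade the consecutive vanishing in (3), (4), (5) to finiteness of $\pd$ or $\id$ of the auxiliary module, followed by Foxby's theorem. Your direct proof of (2)$\Rightarrow$(1) via Auslander--Bridger and Proposition~\ref{prop:improvment-Ghosh-Puthen}(2) with $N=R$ is correct, and is just the paper's composite (2)$\Rightarrow$(3)$\Rightarrow$(1) specialized to $P=R$ (where Foxby is not even needed, since $\id_R R<\infty$ already says $R$ is Gorenstein).

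The one step that fails as written is (1)$\Rightarrow$(3). With $P=R$, the modules $\Ext^i_R(M,R)$ do \emph{not} vanish for all $i\ge 1$: since $M$ is CM of dimension $s$ over the Gorenstein (hence CM) ring $R$, one has $\Ext^{\depth R-s}_R(M,R)\neq 0$ (it is the canonical module of $M$), so whenever $s<\depth R\le 2s+1$ the window $1\le i\le 1+s$ contains a nonvanishing Ext. Your stated justification, namely $\pd_R R=0$ and $\id_R R<\infty$, only accounts for $\Ext^i_R(R,M)$ and $\Tor^R_i(M,R)$, i.e.\ for (4) and (5), not for $\Ext^i_R(M,R)$. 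The repair is exactly the paper's route (1)$\Rightarrow$(2)$\Rightarrow$(3): from $\gdim_R M=\depth R-s<\infty$ one gets $\Ext^i_R(M,R)=0$ for all $i>\gdim_R M$, so $n:=\depth R-s+1\ge 1$ works and satisfies $n+s=\depth R+1\ge\depth_R R$. Since you already carry out this computation in your (2)$\Rightarrow$(1) step, the gap is easily filled, but as stated the implication (1)$\Rightarrow$(3) is not proved.
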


\begin{proof}
	(1) $\Rightarrow$ (2): It is known for any $R$-module $M$, see, e.g., \cite[1.4.9]{Cr00}.
	
	(2) $\Rightarrow$ (3): Note that $\Ext_R^i(M,R)=0$ for all $i> \gdim_R(M)$, cf.~\cite[1.2.7]{Cr00}.
	
	(3) $\Rightarrow$ (1): Since $M$ is Ulrich, if $\Ext_R^{n \le i \le n+s}(M,P)=0$, then Proposition~\ref{prop:improvment-Ghosh-Puthen}.(2) yields that $\id_R P<\infty$. Thus $\id_R P<\infty$ and $\pd_R P < \infty$. Then, by a result of Foxby \cite[Cor.~4.4]{Fo77}, $R$ is Gorenstein.
	
	(1) $\Rightarrow$ (4) and (1) $\Rightarrow$ (5): These trivially follow by considering $Q=R$.
	
	(4) $\Rightarrow$ (1) and (5) $\Rightarrow$ (1): In view of Proposition~\ref{prop:improvment-Ghosh-Puthen}.(1) and (3), one obtains that $\pd_R Q < \infty$. Since $\id_R Q$ is also finite, by \cite[Cor.~4.4]{Fo77}, $R$ is Gorenstein.
\end{proof}  

\section{Complexity of Ulrich modules}\label{sec:Complexity of Ulrich modules}
	
	Here we study complexity of Ulrich modules, and obtain a few characterizations of various CM local rings via Ulrich modules. For every $ n \ge 0 $, let $ \beta_n^R(M) := \rank_k\left( \Tor^R_n(M,k) \right) $ denote the $n$th Betti number of $M$. The formal sum $ P_M^R(t) := \sum_{n \ge 0} \beta_n^R(M) t^n $ is called the Poincar\'{e} series of $ M $. The notions of complexity and curvature were introduced by Avramov.
	
	\begin{definition}\label{def:complexity}
		(1) The complexity of $ M $, denoted $ \cx_R(M) $, is the smallest non-negative integer $ b $ such that $ \beta_n^R(M) \le \alpha n^{b-1} $ for all $ n \gg 0 $, and for some real number $ \alpha > 0 $. If no such $ b $ exists, then $ \cx_R(M) := \infty $.
		
		(2) The curvature of $ M $, denoted $ \curv_R(M) $, is the reciprocal value of the radius of convergence of $ P_M^R(t) $, i.e.,
		\[
		\curv_R(M) := \limsup_{n \to \infty} \sqrt[n]{\beta_n^R(M)}.
		\]
	\end{definition}
	
	We use the following well-known properties of complexity and curvature.
	
	\begin{lemma}\label{lem:cx-properties}
		The following statements hold true.
		\begin{enumerate}[\rm (i)]
			\item
			{\rm (a)} $ \pd_R(M) < \infty \; \Longleftrightarrow \; \cx_R(M) = 0 \; \Longleftrightarrow \; \curv_R(M) = 0 $.\\
			{\rm (b)} $ \cx_R(M) < \infty \; \Longrightarrow \; \curv_R(M) \le 1 $.
			\item
			{\rm (a)} $ \cx_R(M) = \cx_R\left( \Omega_n^R(M) \right) $ and $ \curv_R(M) = \curv_R\left( \Omega_n^R(M) \right) $ for all $ n \ge 1 $.\\
			{\rm (b)} $ \cx_R(M\oplus N) = \max\{ \cx_R(M), \cx_R(N) \} $.\\
			{\rm (c)} $ \curv_R(M\oplus N) = \max\{ \curv_R(M), \curv_R(N) \} $.	
			\item
			Let $ x \in R $ be regular on both $ R $ and $ M $. Then 
			\[
				\cx_{R/(x)}(M/xM) = \cx_R(M) \quad \mbox{and} \quad \curv_{R/(x)}(M/xM) = \curv_R(M).
			\]
		\end{enumerate}
	\end{lemma}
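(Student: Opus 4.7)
The proof will proceed part-by-part, relying entirely on elementary manipulations of Betti numbers and their asymptotic behaviour, together with one standard change-of-rings fact.

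For part (i), the plan is to translate each condition directly into a statement about the sequence $(\beta_n^R(M))_{n\ge 0}$. Since the Betti numbers are nonnegative integers, saying $\cx_R(M)=0$ means $\beta_n^R(M)\le \alpha n^{-1}$ eventually; but an integer sequence bounded by something tending to $0$ must vanish eventually, giving $\pd_R(M)<\infty$. The reverse is immediate. The same reasoning handles $\curv_R(M)=0$: if $\limsup\sqrt[n]{\beta_n^R(M)}=0$, then eventually $\beta_n^R(M)<1$, forcing $\beta_n^R(M)=0$. For (b), polynomial growth $\beta_n^R(M)\le \alpha n^{b-1}$ gives $\sqrt[n]{\beta_n^R(M)}\le \sqrt[n]{\alpha n^{b-1}}\to 1$, whence $\curv_R(M)\le 1$.

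For part (ii), I would use the two standard shift/additivity identities for Betti numbers:
\[
\beta_i^R\!\left(\Omega_n^R(M)\right)=\beta_{i+n}^R(M)\quad\text{for all }i\ge 0,
\]
(obtained by truncating the minimal free resolution of $M$), and
\[
\beta_n^R(M\oplus N)=\beta_n^R(M)+\beta_n^R(N),
\]
(obtained by taking the direct sum of minimal free resolutions). The first identity makes clear that polynomial and exponential growth rates are invariant under shifting the index by a constant, giving (a). For (b) and (c), the elementary sandwich
\[
\max\bigl\{\beta_n^R(M),\beta_n^R(N)\bigr\}\le \beta_n^R(M\oplus N)\le 2\max\bigl\{\beta_n^R(M),\beta_n^R(N)\bigr\}
\]
shows that both polynomial order and exponential rate of $\beta_n^R(M\oplus N)$ coincide with the maximum of those of $\beta_n^R(M)$ and $\beta_n^R(N)$.

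For part (iii), the key input is the observation that if $F_\bullet \twoheadrightarrow M$ is a minimal $R$-free resolution and $x$ is both $R$-regular and $M$-regular, then $F_\bullet\otimes_R R/(x)$ is a minimal $R/(x)$-free resolution of $M/xM$. Acyclicity follows from $\Tor_i^R(M,R/(x))=0$ for $i\ge 1$ (since $x$ is $M$-regular), and minimality is preserved because the differentials of $F_\bullet$ have entries in $\fm$, so their images in $R/(x)$ lie in $\fm/(x)$. Therefore
\[
\beta_n^{R/(x)}(M/xM)=\beta_n^R(M)\qquad\text{for all }n\ge 0,
\]
and the equalities of complexity and curvature follow.

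None of the three parts presents a serious obstacle; the only care needed is the integrality observation that forces the limits $\cx_R(M)=0$ and $\curv_R(M)=0$ to actually yield $\pd_R(M)<\infty$, and the minimality check in (iii) which uses only that $x\in\fm$.
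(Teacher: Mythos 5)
Your proposal is correct and follows essentially the same route as the paper, which simply cites \cite[4.2.3 and 4.2.4]{Avr98} for (i)--(ii) and \cite[p.~140, Lem.~2]{Mat86} for the key fact in (iii) that $\beta_n^{R/(x)}(M/xM)=\beta_n^R(M)$; your argument just supplies self-contained proofs of these standard facts. The details you give (integrality forcing eventual vanishing of Betti numbers, the shift and additivity identities, and the acyclicity-plus-minimality check for $F_\bullet\otimes_R R/(x)$) are all accurate.
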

	
	\begin{proof}
		(i) and (ii) can be seen in \cite[4.2.3 and 4.2.4]{Avr98}.
		
		(iii) Since $ x $ is regular on both $ R $ and $ M $, for every $ n \ge 0 $, the $n$th Betti number of $M/xM$ over $R/xR$ is same as that of $M$ over $R$, see, e.g., \cite[pp.~140, Lem.~2]{Mat86}. Hence the desired equalities follow.
	\end{proof}  

	The following lemma could also be deduced from \cite[3.3.5.(1)]{Avr98}, however our proof is more elementary, and it exactly shows the part where $x\notin \fm^2$ is needed.
	
	\begin{lemma}\label{lem:cx-mod-x}
	Let $x\in \fm\smallsetminus \fm^2$ be an $R$-regular element. Let $M$ be an $R$-module such that $xM=0$. Then $\cx_{R/(x)}(M)=\cx_R(M)$ and $\curv_{R/(x)}(M)=\curv_R (M)$.
%
	\end{lemma}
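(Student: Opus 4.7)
The plan is to establish the Betti-number identity
\[
\beta_n^R(M) \;=\; \beta_n^{\bar R}(M) + \beta_{n-1}^{\bar R}(M) \qquad (n \ge 0,\ \beta_{-1}^{\bar R}(M) := 0),
\]
where $\bar R := R/(x)$. This is equivalent to $P_M^R(t) = (1+t)\,P_M^{\bar R}(t)$, and it immediately yields both $\cx_R(M) = \cx_{\bar R}(M)$ and $\curv_R(M) = \curv_{\bar R}(M)$ by Definition~\ref{def:complexity}, since multiplication of the Poincar\'e series by the polynomial $(1+t)$ preserves polynomial growth order as well as the radius of convergence.

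I would prove the identity by explicitly building a minimal free $R$-resolution of $M$ from a minimal free $\bar R$-resolution. Let $(\bar F_\bullet,\,\bar\partial_\bullet) \to M$ be the minimal free $\bar R$-resolution with $\bar F_n = \bar R^{b_n}$ and $b_n := \beta_n^{\bar R}(M)$. Pick entry-wise lifts $\tilde\partial_n \colon R^{b_n} \to R^{b_{n-1}}$ of $\bar\partial_n$, with all entries in $\fm$ (possible by minimality of $\bar\partial_n$). Since $\bar\partial_{n-1}\bar\partial_n = 0$, the product $\tilde\partial_{n-1}\tilde\partial_n$ has entries in $xR$, and the $R$-regularity of $x$ produces a unique matrix $B_n \colon R^{b_n} \to R^{b_{n-2}}$ with $\tilde\partial_{n-1}\tilde\partial_n = -xB_n$. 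Set
\[
F_n := R^{b_n} \oplus R^{b_{n-1}}, \qquad \partial_n := \begin{pmatrix} \tilde\partial_n & xI \\ B_n & -\tilde\partial_{n-1}\end{pmatrix}.
\]
A block-matrix computation gives $\partial_{n-1}\partial_n = 0$; the non-trivial off-diagonal identity $B_{n-1}\tilde\partial_n = \tilde\partial_{n-2} B_n$ needed for this follows by multiplying both sides by $x$ and cancelling, once again using the $R$-regularity of $x$.

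Acyclicity of $F_\bullet$ in positive degrees (together with $H_0(F_\bullet) = M$) can be checked directly. The augmentation $F_0 \twoheadrightarrow \bar R^{b_0} \twoheadrightarrow M$ has kernel $\tilde\partial_1 R^{b_1} + xR^{b_0} = \operatorname{im}(\partial_1)$. For $n \ge 1$, a cycle $(a,b) \in F_n$ satisfies $\tilde\partial_n a + xb = 0$, so $\bar\partial_n\bar a = 0$ in $\bar F_{n-1}$; by $\bar R$-acyclicity of $\bar F_\bullet$ there is $\bar c \in \bar F_{n+1}$ with $\bar a = \bar\partial_{n+1}\bar c$. Lifting $\bar c$ to $c \in R^{b_{n+1}}$ gives $a = \tilde\partial_{n+1} c + xa'$ for a uniquely determined $a' \in R^{b_n}$, and a short computation shows $b = B_{n+1} c - \tilde\partial_n a'$ (using the defining relation for $B_{n+1}$ and regularity of $x$), whence $(a,b) = \partial_{n+1}(c,a')$.

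Finally, the minimality of $F_\bullet$ is exactly where the hypothesis $x \notin \fm^2$ enters, in complete isolation. The entries of $\tilde\partial_n$ and of $xI$ lie in $\fm$ by construction. For the block $B_n$: entries of $xB_n = -\tilde\partial_{n-1}\tilde\partial_n$ lie in $\fm^2$, so if some entry $u$ of $B_n$ were a unit, then $x = -u^{-1}(xu) \in \fm^2$, contradicting the hypothesis. Hence every entry of $\partial_n$ lies in $\fm$, so $F_\bullet$ is a minimal free $R$-resolution of $M$, yielding $\beta_n^R(M) = b_n + b_{n-1}$ and hence both equalities of the lemma. The main technical obstacle is the careful bookkeeping in the construction and the acyclicity verification; once that is in place, the role of $x \notin \fm^2$ is cleanly visible only in the closing minimality argument, which is precisely the point the paper emphasises.
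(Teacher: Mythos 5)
Your proof is correct, but it takes a genuinely different route from the paper's. You construct, from a minimal free $\bar R$-resolution of $M$, an explicit minimal free $R$-resolution via the block matrices $\bigl(\begin{smallmatrix}\tilde\partial_n & xI\\ B_n & -\tilde\partial_{n-1}\end{smallmatrix}\bigr)$; this is the ``standard construction'' and yields the exact Poincar\'e series identity $P^R_M(t)=(1+t)P^{\bar R}_M(t)$, i.e.\ precisely the statement \cite[3.3.5.(1)]{Avr98} that the authors mention as an alternative source just before the lemma. The paper deliberately avoids this construction: it only proves the two inequalities $\beta_{n-1}^{\bar R}(M)\le\beta_n^R(M)\le\beta_n^{\bar R}(M)+\beta_{n-1}^{\bar R}(M)$, the first from $\beta_n^R(M)=\rank_k\Tor^{\bar R}_{n-1}(\fm/x\fm,M)$ together with the fact that $k$ is a direct summand of $\fm/x\fm$ when $x\in\fm\smallsetminus\fm^2$ (this is where the hypothesis enters there), and the second from the change-of-rings long exact sequence of Tor; these inequalities already pin down complexity and curvature. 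So your argument is stronger (exact Betti numbers rather than two-sided bounds) and fully self-contained, at the cost of the bookkeeping in the acyclicity and minimality checks, while the paper's is shorter and leans on two cited facts. All the delicate points in your write-up check out: $B_n$ is well defined and unique because $x$ is a nonzerodivisor, the identity $B_{n-1}\tilde\partial_n=\tilde\partial_{n-2}B_n$ follows by multiplying by $x$ and cancelling, the cycle-is-boundary computation is right, and the hypothesis $x\notin\fm^2$ is used exactly once, to rule out unit entries in $B_n$ --- which, pleasingly, matches the paper's stated motivation for isolating that hypothesis.
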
 
	\begin{proof}
		Let $R'=R/(x)$. For every $n\ge 2$, note that
		\begin{equation}\label{betti-tor}
		\beta_n^R(M) = \rank_{k} \Tor^R_{n-1}(\fm,M) = \rank_k\Tor^{R'}_{n-1}(\fm/x\fm,M),
		\end{equation}
		where the last equality can be observed from \cite[pp.~140, Lem.~2]{Mat86}. Since $x\in \fm\smallsetminus \fm^2$, it follows that $k$ is a direct summand of $\fm/x\fm$ (see, e.g., \cite[Cor.~5.3]{Tak06}). Hence \eqref{betti-tor} yields that $\beta_n^R(M)\ge \beta_{n-1}^{R'}(M)$ for all $n\ge 2$. So $\cx_R(M)\ge \cx_{R'}(M)$ and $\curv_R(M)\ge \curv_{R'}(M)$. Also from the long exact sequence of Tor as described in \cite[Lem.~2.7]{GP19b}, one obtains that $\beta_n^R(M)\le \beta_n^{R'}(M)+\beta_{n-1}^{R'}(M)$, hence $\cx_R(M)\le \cx_{R'}(M)$ and $\curv_{R}(M)\le \curv_{R'}(M)$. Thus the desired equalities hold true.
	\end{proof}

\begin{remark}
	Regarding inequalities of complexities and curvatures of modules along deformation of rings, we point out that \cite[4.2.5.(3)]{Avr98} does not hold true always. For example, consider $R = k[[T]]$, $R' = R/(T^2)$ and $M=k$. In this setup, $\cx_{R'}(k) = 1 \nleqslant 0 = \cx_R(k)$ and $\curv_{R'}(k) = 1 \nleqslant 0 = \curv_R(k)$.
\end{remark}
	
	Let us recall a few well-known properties of complexity of the residue field.
	
	\begin{proposition}{~}\label{prop:facts-projcx-k}
		\begin{enumerate}[\rm (i)]
			\item
			{\rm \cite[Prop.~2]{Avr96}} The residue field has maximal complexity and curvature, i.e.,
			\begin{align*}
				\cx_R(k) & = \sup\{ \cx_R(M) : M \mbox{ is an $ R $-module} \} \,\mbox{ and}\\
				\curv_R(k) & = \sup\{ \curv_R(M) : M \mbox{ is an $ R $-module} \}.
			\end{align*}
			\item 
			$R \mbox{ is complete intersection } \Longleftrightarrow \cx_R(k) < \infty \Longleftrightarrow \curv_R(k) \le 1$, see, e.g., \cite[8.1.2 and 8.2.2]{Avr98}
		\end{enumerate}
	\end{proposition}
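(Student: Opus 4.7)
The plan is to treat Proposition~\ref{prop:facts-projcx-k} as a compilation of classical results of Avramov and Gulliksen; the proof amounts to invoking the cited references, and I sketch the main ideas below, identifying the principal obstacle.

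For part (i), it suffices to establish $\cx_R(M) \le \cx_R(k)$ and $\curv_R(M) \le \curv_R(k)$ for every nonzero finitely generated $R$-module $M$, since $k$ is itself such a module and hence realizes the supremum. Following \cite[Prop.~2]{Avr96}, I would begin by choosing a minimal free presentation $R^{\mu(M)} \twoheadrightarrow M$, noting that $\syz^R_1(M) \subseteq \fm R^{\mu(M)}$, and then iterate. Chasing the long exact $\Tor$ sequences coming from $0 \to \fm \to R \to k \to 0$, together with the syzygy shift $\beta^R_{n+1}(M) = \beta^R_n(\syz M)$, yields an inequality of the form $\beta^R_n(M) \le C_M \cdot \beta^R_n(k)$ for all $n \ge 1$, where $C_M$ depends only on $M$. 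Taking $\limsup$ of $n$-th roots then controls curvature, while the polynomial bound on Betti numbers controls complexity; this is the whole content of (i).

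For part (ii), the forward directions are elementary from Tate's construction of the minimal free resolution of $k$ over a complete intersection $R$ of codimension $c$: the Betti numbers $\beta^R_n(k)$ grow as a polynomial of degree exactly $c$, which immediately gives $\cx_R(k) = c < \infty$ and $\curv_R(k) \le 1$. The hard converses — recovering the complete intersection property from mere polynomial (or sub-exponential) growth of $\beta^R_n(k)$ — constitute the main obstacle; they rely on the structure theory of the homotopy Lie algebra of $R$ and are carried out in full in \cite[8.1.2, 8.2.2]{Avr98}, which we simply invoke. Together with (i), this completes the plan and no further work is required in the present paper.
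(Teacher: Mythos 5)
The paper offers no proof of this proposition at all: it is stated purely as a recollection of known facts with exactly the citations you invoke, namely \cite[Prop.~2]{Avr96} for (i) and \cite[8.1.2 and 8.2.2]{Avr98} for (ii), so your proposal takes essentially the same approach. Your supplementary sketches are not needed for the paper's purposes; just note that the coefficientwise bound $\beta_n^R(M)\le C_M\,\beta_n^R(k)$ you posit in (i) is stronger than what the cited result asserts or requires (only $\cx_R(M)\le\cx_R(k)$ and $\curv_R(M)\le\curv_R(k)$ are needed), but nothing in your argument or the paper rests on it.
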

	
	\begin{definition}\label{defn:max-complexity-curvature}
		An $ R $-module $ M $ is said to have maximal complexity (resp., curvature) if $ \cx_R(M) = \cx_R(k) $ (resp., $ \curv_R(M) = \curv_R(k) $).
	\end{definition}

	\begin{theorem}\label{thm:Ulrich-max-cx}
		Every Ulrich module over a CM local ring has maximal complexity and curvature.
	\end{theorem}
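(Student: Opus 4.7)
The plan is to exploit the Ulrich condition $\fm M = (\mathbf{x}) M$ in order to realize $k^{\mu(M)}$ as a finite-step reduction of $M$ along an $M$-regular sequence, and then to transfer Betti number asymptotics through this reduction. Since Proposition~\ref{prop:facts-projcx-k}(i) already gives $\cx_R(k) \ge \cx_R(M)$ and $\curv_R(k) \ge \curv_R(M)$ unconditionally, only the reverse inequalities remain to be proved.

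First, I would pass to the faithfully flat extension $R[X]_{\fm[X]}$ to assume that $k$ is infinite, a move which preserves complexity, curvature, and the Ulrich property by \cite[Prop.~2.2.(3)]{GTT15}. Then, by \cite[Prop.~2.2.(2)]{GTT15}, there exists $\mathbf{x} = x_1, \dots, x_s$ in $\fm$ with $\fm M = (\mathbf{x}) M$; since $\mathbf{x}$ is a system of parameters on the CM module $M$, it is automatically $M$-regular, and the quotient satisfies
\[
	M/(\mathbf{x}) M = M/\fm M \cong k^{\mu(M)}.
\]

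The main step is a Betti number identity: for any element $y \in \fm$ that is $M$-regular, tensoring $0 \to M \xrightarrow{y} M \to M/yM \to 0$ with $k$ produces a long exact sequence in which multiplication by $y$ acts as zero on each $\Tor^R_n(M,k)$ (as that module is a $k$-vector space), yielding short exact sequences
\[
	0 \to \Tor^R_n(M,k) \to \Tor^R_n(M/yM, k) \to \Tor^R_{n-1}(M,k) \to 0,
\]
and hence $\beta^R_n(M/yM) = \beta^R_n(M) + \beta^R_{n-1}(M)$. Iterating this identity inductively along $\mathbf{x}$ (which remains regular on $M/(x_1,\dots,x_{j-1})M$ at each stage by definition of an $M$-regular sequence) produces
\[
	\beta^R_n\bigl(M/(\mathbf{x})M\bigr) = \sum_{i=0}^{s} \binom{s}{i}\, \beta^R_{n-i}(M).
\]
Standard polynomial and exponential growth comparisons applied to this identity then force $\cx_R(M/(\mathbf{x})M) = \cx_R(M)$ and $\curv_R(M/(\mathbf{x})M) = \curv_R(M)$.

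Combining these equalities with $M/(\mathbf{x})M \cong k^{\mu(M)}$ and Lemma~\ref{lem:cx-properties}(ii)(b)--(c), one concludes $\cx_R(M) = \cx_R(k^{\mu(M)}) = \cx_R(k)$ and $\curv_R(M) = \curv_R(k)$. The main potential obstacle is that the sequence $\mathbf{x}$ produced by the Ulrich hypothesis need not be $R$-regular when $s < d$, so Lemma~\ref{lem:cx-properties}(iii) (which requires regularity on both $R$ and $M$) cannot be invoked directly; the iterative $\Tor$ argument above is crafted precisely to bypass that difficulty using only the $M$-regularity of $\mathbf{x}$.
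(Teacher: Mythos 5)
Your proof is correct, but it takes a genuinely different route from the paper's. The paper argues by induction on $s=\dim M$: at each step it chooses a superficial element $x$ that is regular on \emph{both} $R$ and $M$ (this is where the CM hypothesis on $R$ enters, to guarantee $\depth(R\oplus M)=s>0$), notes that $M/xM$ stays Ulrich over $R/xR$, and transfers complexity and curvature across the ring change via Lemma~\ref{lem:cx-properties}(iii) for $M$ and Lemma~\ref{lem:cx-mod-x} for $k$ (the latter needing $x\notin\fm^2$). You instead never change the base ring: you take the $M$-regular system of parameters $\mathbf{x}$ with $\fm M=(\mathbf{x})M$ supplied by the Ulrich condition and iterate the elementary identity $\beta^R_n(M/yM)=\beta^R_n(M)+\beta^R_{n-1}(M)$ (valid for any $y\in\fm$ regular on $M$, with no regularity on $R$ required) to arrive at $\mu(M)\,\beta^R_n(k)=\sum_{i=0}^{s}\binom{s}{i}\beta^R_{n-i}(M)$, from which both equalities follow by routine growth comparisons. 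Your approach buys two things: it is non-inductive and yields an explicit quantitative relation between the Betti numbers of $M$ and of $k$, and, since it never invokes an element regular on $R$, it actually proves the stronger statement that Ulrich modules over an \emph{arbitrary} local ring have maximal complexity and curvature. What the paper's route buys in exchange is that it runs through its change-of-rings lemmas (\ref{lem:cx-properties}(iii) and \ref{lem:cx-mod-x}), which are needed elsewhere in the article anyway, and it keeps the Ulrich property visible at every stage of the reduction.
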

	
	\begin{proof}
		Suppose that $R$ is CM, and $M$ is an Ulrich $R$-module. We prove that $\cx_R(M) = \cx_R(k)$ and $\curv_R(M) = \curv_R(k)$ by using induction on $s:=\dim(M)$. We may assume that the residue field $k$ is infinite. In the base case, assume that $s = 0$. Since $M$ is Ulrich, in this case $\fm M = 0$, i.e., $M$ is a nonzero $k$-vector space, hence the desired equalities are obvious.
		
		Next assume that $s \ge 1$. There is an $R\oplus M$-superficial element $x$. Since $\depth(R\oplus M) = s \ge 1$, the element $x \in \fm \smallsetminus \fm^2 $, and it must be regular on $R\oplus M$. Setting $\overline{(-)} := (-) \otimes_R R/xR$, the module $\overline{M}$ is Ulrich over the CM local ring $\overline{R}$ of dimension $s-1$. So, by the induction hypothesis,
		\begin{equation}\label{cx-curv-M/xM-k}
		\cx_{\overline{R}}(\overline{M}) = \cx_{\overline{R}}(k) \quad \mbox{ and } \quad \curv_{\overline{R}}(\overline{M}) = \curv_{\overline{R}}(k).
		\end{equation}
		In view of Lemma~\ref{lem:cx-properties}.(iii), we have
		\begin{equation}\label{cx-curv-M/xM-M}
			\cx_{\overline{R}}(\overline{M}) = \cx_R(M) \quad \mbox{ and } \quad \curv_{\overline{R}}(\overline{M}) = \curv_R(M).
		\end{equation}
		Since $x \in \fm \smallsetminus \fm^2 $ is $R$-regular, by Lemma~\ref{lem:cx-mod-x}, $\cx_{\overline{R}}(k) = \cx_R(k)$ and $\curv_{\overline{R}}(k) = \curv_R(k)$. Therefore it follows from \eqref{cx-curv-M/xM-k} and \eqref{cx-curv-M/xM-M} that $\cx_R(M) = \cx_R(k)$ and $\curv_R(M) = \curv_R(k)$. This completes the proof.
	\end{proof}

	As a consequence of Theorem~\ref{thm:Ulrich-max-cx}, we obtain the following new characterizations of complete intersection local rings in terms of Ulrich modules.
	
	\begin{corollary}\label{cor:charac-CI-via-Ulrich}
		Suppose that $R$ is CM. Let $M$ be an Ulrich $R$-module. Then the following are equivalent:
		\begin{enumerate}[\rm (1)]
			\item $R$ is complete intersection $($of codimension $c$$)$.
			\item $\cid_R(M) < \infty$.
			\item $\cx_R(M) < \infty$ $($and $\cx_R(M) = c$$)$.
			\item $\curv_R(M) \le 1$.
		\end{enumerate}
	\end{corollary}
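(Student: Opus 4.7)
The plan is to prove the cycle $(1)\Rightarrow(2)\Rightarrow(3)\Rightarrow(4)\Rightarrow(1)$, and then handle the parenthetical refinement about codimension separately. The first three implications are standard facts (not specific to Ulrich modules); the real content lies in $(4)\Rightarrow(1)$, where Theorem~\ref{thm:Ulrich-max-cx} is brought to bear.

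First, $(1)\Rightarrow(2)$ is a result of Avramov--Gasharov--Peeva: over a complete intersection, every finitely generated module has finite CI-dimension. The implication $(2)\Rightarrow(3)$ follows from the general inequality $\cx_R(M)\le\cid_R(M)$, which is a basic property of CI-dimension (and in fact equality holds when the CI-dimension is finite). The implication $(3)\Rightarrow(4)$ is exactly Lemma~\ref{lem:cx-properties}(i)(b).

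For the crucial implication $(4)\Rightarrow(1)$, the key observation is that Theorem~\ref{thm:Ulrich-max-cx} gives $\curv_R(M)=\curv_R(k)$ since $R$ is CM and $M$ is Ulrich. Hence the hypothesis $\curv_R(M)\le 1$ forces $\curv_R(k)\le 1$, and then Proposition~\ref{prop:facts-projcx-k}(ii) yields that $R$ is complete intersection.

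Finally, for the statement $\cx_R(M)=c$ in (3): assuming $R$ is complete intersection of codimension $c$, it is classical that $\cx_R(k)=c$ (see, e.g., \cite[8.1.2]{Avr98}). Applying Theorem~\ref{thm:Ulrich-max-cx} once more, $\cx_R(M)=\cx_R(k)=c$. I expect no serious obstacle here; the only subtle point is that Theorem~\ref{thm:Ulrich-max-cx} is doing all the work, and the remaining implications are an easy assembly of standard facts about complexity, curvature, and CI-dimension.
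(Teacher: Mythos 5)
Your proposal is correct and follows essentially the same route as the paper: the implications $(1)\Rightarrow(2)\Rightarrow(3)$ are the standard facts from Avramov--Gasharov--Peeva, and the remaining implications (in particular the key step $(4)\Rightarrow(1)$) are obtained by combining Theorem~\ref{thm:Ulrich-max-cx} with Proposition~\ref{prop:facts-projcx-k}(ii), exactly as you do. The handling of the codimension refinement via $\cx_R(M)=\cx_R(k)=c$ is also the intended argument.
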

	
	\begin{proof}
		The implications (1) $\Rightarrow$ (2) $\Rightarrow$ (3) hold true for any $R$-module \cite[(1.3), (5.6)]{AGP97}. The other implications are obtained from Theorem~\ref{thm:Ulrich-max-cx} and Proposition~\ref{prop:facts-projcx-k}.(ii).
	\end{proof}
	
	In the following corollary, by an abstract hypersurface, we mean a complete intersection local ring of codimension at most $1$. It is well known that every CM local ring of codimension at most $1$ is an abstract hypersurface. The result stated in the corollary below can also be deduced from \cite[5.2.8 and 5.3.3.(2)]{Avr98}. However our proof is more direct.
	
	\begin{corollary}\label{cor:min-mult-comlete-inter}
		Suppose that $R$ is CM of minimal multiplicity, and it admits a module $ M $ such that $ 0 < \cx_R(M) < \infty $ $($or $ 0 < \curv_R(M) \le 1 $$)$. Then $ R $ is a complete intersection ring of codimension $1$, i.e., $R$ is an abstract hypersurface.
	\end{corollary}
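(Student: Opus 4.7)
The plan is to reduce modulo a minimal reduction of $\fm$ to obtain an Artinian local ring with square-zero maximal ideal, where every first syzygy is automatically forced to be a $k$-vector space.

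Via the faithfully flat base change $R \to R[X]_{\fm[X]}$, which preserves CM-ness, minimal multiplicity, complexity and curvature, I may assume $k$ is infinite. Since $\cx_R(M) > 0$ (equivalently $\pd_R M = \infty$), replacing $M$ by its $d$-th syzygy $\syz_d^R(M)$ produces a nonzero MCM module with the same complexity and curvature, by Lemma~\ref{lem:cx-properties}(ii). Because $R$ is CM of minimal multiplicity with infinite residue field, $\fm$ admits a minimal reduction $J = (x_1,\dots,x_d)$ generated by a system of parameters satisfying $\fm^2 = J\fm$. Since $M$ is MCM, $\mathbf{x}$ is regular on both $R$ and $M$, so Lemma~\ref{lem:cx-properties}(iii) gives $\cx_{\overline{R}}(\overline{M}) = \cx_R(M)$ and $\curv_{\overline{R}}(\overline{M}) = \curv_R(M)$, where $\overline{R} := R/J$ and $\overline{M} := M/JM$. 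Crucially, $(\fm/J)^2 = (\fm^2 + J)/J = J/J = 0$.

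Next consider $\overline{N} := \syz_1^{\overline{R}}(\overline{M})$. It embeds in $\overline{\fm}\,\overline{R}^{\mu(\overline{M})}$ and so is annihilated by $\overline{\fm}$ (since $\overline{\fm}^2 = 0$); hence $\overline{N}$ is a $k$-vector space. It is nonzero because $\overline{M}$ is not $\overline{R}$-free, as $\cx_{\overline{R}}(\overline{M}) > 0$. Writing $\overline{N} \cong k^{\oplus m}$ with $m \ge 1$ and invoking Lemma~\ref{lem:cx-properties}(ii)(a)--(c), one obtains $\cx_{\overline{R}}(k) = \cx_{\overline{R}}(\overline{N}) = \cx_{\overline{R}}(\overline{M}) < \infty$; analogously $\curv_{\overline{R}}(k) \le 1$ in the curvature case. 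Proposition~\ref{prop:facts-projcx-k}(ii) then forces $\overline{R}$ to be complete intersection, and since the complete intersection property is preserved in both directions under quotient by a regular sequence, $R$ itself is complete intersection.

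Finally I pin down the codimension. Write $\overline{R} = S/I$ for a regular local ring $(S,\mathfrak{n})$ of dimension $e := \edim(\overline{R}) = \edim(R) - d$. Minimality of the presentation gives $I \subseteq \mathfrak{n}^2$, while $\overline{\fm}^2 = 0$ gives $\mathfrak{n}^2 \subseteq I$; thus $I = \mathfrak{n}^2$, with $\mu(I) = \binom{e+1}{2}$. For this $I$ to be generated by a regular sequence (necessarily of length $e = \htt(I)$), one needs $\binom{e+1}{2} \le e$, forcing $e \in \{0,1\}$. The case $e=0$ would make $R$ regular, contradicting $\cx_R(M)>0$, so $e = 1$ and $R$ is an abstract hypersurface. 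The main obstacle is orchestrating the chain of reductions $M \rightsquigarrow \syz_d^R M \rightsquigarrow \overline{M} \rightsquigarrow \overline{N}$ so that the finite-complexity assumption survives all the way to $\overline{N}$, where the square-zero structure of $\overline{\fm}$ converts it into a direct sum of copies of $k$; once this is in hand, the embedding-dimension bound is a short binomial-coefficient check.
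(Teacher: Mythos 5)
Your argument is correct, but it reaches the conclusion by a genuinely different route. The paper replaces $M$ by $\Omega_{d+1}^R(M)$, quotes \cite[Prop.~3.6]{umm} to see that this syzygy is Ulrich, and then applies Corollary~\ref{cor:charac-CI-via-Ulrich} (which rests on Theorem~\ref{thm:Ulrich-max-cx}, that Ulrich modules have maximal complexity) to get that $R$ is a complete intersection; you bypass the Ulrich machinery altogether by cutting down by a minimal reduction $J$ with $\fm^2=J\fm$ and noting that over the resulting square-zero Artinian ring the first syzygy of the non-free module $\overline{M}$ is a nonzero $k$-vector space, so the finite-complexity (or curvature) hypothesis transfers directly to $k$. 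In effect you re-derive, by hand and in the only case needed, the content of the cited results, which makes your proof more self-contained. The two proofs also differ in the codimension count: the paper uses that $\overline{R}$ is Gorenstein with $\overline{\fm}\subseteq\Soc(\overline{R})$, hence $\dim_k\overline{\fm}\le 1$, whereas you use the minimal Cohen presentation $\overline{R}=S/\mathfrak{n}^2$ and the identity $\mu(\mathfrak{n}^2)=\binom{e+1}{2}=e$. Both are valid. Two points are worth making explicit: the ascent of the complete intersection property from $\overline{R}$ to $R$ is standard, but in your setting it also follows from the paper's own Lemma~\ref{lem:cx-mod-x}, since the $x_i$ lie in $\fm\smallsetminus\fm^2$, giving $\cx_R(k)=\cx_{\overline{R}}(k)<\infty$ directly; and your binomial count tacitly uses the standard fact that for a complete intersection the defining ideal of the \emph{minimal} Cohen presentation is itself generated by a regular sequence, so that $\mu(I)=\htt(I)$.
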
  
	
	\begin{proof} 
		As Betti numbers of a module and (co)dimension of a ring remain same after passing through the faithfully flat extension $R[X]_{\fm[X]}$, so are complexity, projective dimension and complete intersection properties (cf.~\cite[2.3.3.(b)]{BH93}). Thus we may assume that the residue field $k$ is infinite. From the assumption on $M$, it follows that $ \pd_R (M) = \infty $. Hence, $R$ is singular, and $\Omega_{d+1}^R (M) $ is an Ulrich $R$-module \cite[Prop. 3.6]{umm}. Moreover, by the assumption, $ \cx_R(\Omega_{d+1}^R(M)) = \cx_R(M) < \infty$ or $\curv_R(\Omega_{d+1}^R(M)) = \curv_R(M) \le 1$. So, by Corollary~\ref{cor:charac-CI-via-Ulrich}, $R$ is complete intersection (hence Gorenstein). Since $R$ has minimal multiplicity, there exists an $R$-regular sequence $x_1,\dots,x_d \in \fm$ such that $\fm^2 = (x_1,\dots,x_d) \fm$. Put $ \overline{(-)} := (-) \otimes_R R/(x_1,\dots,x_d) $. Then $ \overline{\fm}^2 = 0 $, so $\overline{\fm } \subseteq (0 : \overline{\fm}) = \Soc(\overline{R})$. Since $\overline R$ is also Gorenstein, $\dim_k(\Soc(\overline{R}))=1$, hence $ \dim_k (\overline{\fm}) \le 1 $. But $ R $ is singular, so $ \overline{\fm} \ne 0 $, thus $ \dim_k (\overline{\fm}) = 1 $. Therefore $ \fm/(x_1,\dots,x_d) = y \overline R = (y,x_1,\dots,x_d)/(x_1,\dots,x_d)$ for some $y\in R$, hence $\fm=(y,x_1,\dots,x_d)$. Thus $R$ has codimension $1$. 
	\end{proof}  

	Next we verify Theorem~\ref{thm:Ulrich-max-cx}, Corollaries~\ref{cor:charac-CI-via-Ulrich} and \ref{cor:min-mult-comlete-inter} for a class of CM local rings by explicitly computing complexity and curvature of some Ulrich modules.
	
	\begin{example}\label{exam:complexity-Ulrich}
		Consider the $1$-dimensional CM local ring
		\begin{center}
			$R = k[[x_1,\ldots,x_n]]/\langle x_i x_j : 1 \le i < j \le n \rangle$,
		\end{center}
		where $k$ is a field, and $x_1,\ldots,x_n$ are indeterminates. Since
		\begin{center}
			$e(Rx_1) = e(R/\langle x_2,\ldots,x_n \rangle) = e(k[[x_1]]) = 1 = \mu(Rx_1)$,
		\end{center}
		the $R$-module $Rx_1$ is Ulrich. Note that $Rx_1 \cong R/\langle x_2,\ldots,x_n \rangle$, which is an MCM $R$-module. Thus, for every $1 \le j \le n$, the $R$-module $Rx_j$ is MCM and Ulrich. As $\Omega_1^R(k) = \fm = \langle x_1,\ldots,x_n \rangle = Rx_1 \oplus R x_2 \oplus \cdots \oplus R x_n$, it follows that
		\begin{align*}
		&\cx_R(k) = \cx_R(\fm) = \max\{ \cx_R(Rx_i) : 1 \le i \le n \} = \cx_R(Rx_j) \;\mbox{ and}\\
		&\curv_R(k) = \max\{ \curv_R(Rx_i) : 1 \le i \le n \} = \curv_R(Rx_j)
		\end{align*}
		for every $1 \le j \le n$. To compute $\cx_R(Rx_j)$ and $\curv_R(Rx_j)$ explicitly, note that
		\[
			\Omega_m^R(Rx_1) = \Omega_{m-1}^R(\langle x_2,\ldots,x_n \rangle) = \Omega_{m-1}^R(R x_2) \oplus \cdots \oplus \Omega_{m-1}^R(R x_n)
		\]
		for all $m\ge 1$, and $\beta_m^R(Rx_1) = \mu(\Omega_m^R(Rx_1))$. An induction on $m$ yields that
		\begin{equation}\label{betti-numbers}
		\beta_m^R(Rx_1) = (n-1)^m \quad \mbox{for all } m \ge 1.
		\end{equation}
		Computing complexity and curvature of $Rx_1$ from \eqref{betti-numbers}, one obtains that
		\[
			\curv_R(Rx_1) = n-1
			\quad \;\mbox{ and} \quad
			\cx_R(Rx_1) = \left\{ \begin{array}{ll}
			0 & \mbox{if } n = 1,\\
			1 & \mbox{if } n = 2,\\
			\infty & \mbox{if } n \ge 3.
			\end{array} \right.			
		\]
		Note that $R$ is complete intersection when $n\le 2$. In the case of $n\ge 3$, since $x_1 + \cdots + x_n$ is $R$-regular, and the quotient ring
		\begin{center}
			$R/\langle x_1 + \cdots + x_n \rangle \cong k[[x_2,\ldots,x_n]]/\langle x_i x_j : 2 \le i \le j \le n \rangle$
		\end{center}
		is of type $n-1 \ge 2$, it follows that $R/\langle x_1 + \cdots + x_n \rangle$ is not Gorenstein. Thus $R$ is complete intersection \iff $n\le 2$. Note that $e(R) = n = \edim(R) - \dim(R) + 1$, hence $R$ has minimal multiplicity.  
	\end{example}

	Corollary~\ref{cor:min-mult-comlete-inter} does not hold true for arbitrary CM local rings.
	
	\begin{example}
		The ring $R = k[[x_1,\ldots,x_n,y]]/\left( \langle x_1,\ldots,x_n \rangle^2 + \langle y^2 \rangle \right)$ for every $n\ge 1$ is an Artinian local ring. It does not have minimal multiplicity. Set $M := R/(y)$. From the minimal free resolution $ \cdots \to R \stackrel{y}{\to} R \stackrel{y}{\to} R \to 0$ of $M$, one computes that $\cx_R(M) = 1$ and $\curv_R(M) = 1$. Note that $R$ is complete intersection if and only if $n = 1$. In fact, if $ n \ge 2 $, then $ \Soc(R) = \langle x_1 y, \ldots, x_ny \rangle $ is not cyclic, hence $ R $ is not Gorenstein.
	\end{example} 
\section{Ulrich modules over CM rings of minimal multiplicity}

%

The existence of an MCM Ulrich module over an arbitrary CM local ring is still an open question \cite[Sec.~3]{Ulr84}. However, this question has affirmative answers for some particular classes of rings. For example, if $R$ is CM of dimension $1$, then $\fm^{e(R)-1}$ is an MCM Ulrich $R$-module \cite[(2.1)]{BHU87}. See also \cite[Proof of Cor.~3.2]{Ulr84}. In Remark~\ref{rmk:Ulrich-mod-of-dim-1}, we saw that over any CM local ring of positive dimension, there always exist Ulrich modules of dimension $1$.

If $R$ is CM of minimal multiplicity, then for every $n\ge d$, $\Omega_n^R(k)$ is Ulrich \cite[(2.5)]{BHU87}. We considerably strengthen this fact.

\begin{proposition}\label{prop:exam-Ulrich-hom-image-syz-k}
	Suppose $R$ is CM of minimal multiplicity. Let $M$ be an MCM homomorphic image of a finite direct sum of syzygy modules of $k$. Then $M$ is Ulrich.
\end{proposition}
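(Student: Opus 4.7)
The plan is to reduce, using Nakayama's lemma, to the case where $M$ is generated by the images of the summands $\Omega_{n_i}^R(k)$ with $n_i \ge d := \dim R$; since each such summand is MCM Ulrich by the Brennan--Herzog--Ulrich result recalled just before, the Ulrich equation $\mathfrak{m} N = (x_1,\ldots,x_d) N$ will descend to $M$.

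First, pass to a faithfully flat extension to assume $k$ is infinite, and fix a system of parameters $x_1,\ldots,x_d \in \mathfrak{m}$ satisfying $\mathfrak{m}^2 = (x_1,\ldots,x_d)\mathfrak{m}$. The case $d=0$ is trivial, since then $\mathfrak{m}^2=0$ forces $\Omega_n^R(k) \subseteq \mathfrak{m} F_{n-1}$ to be a $k$-vector space for every $n \ge 1$, and so every quotient satisfies $\mathfrak{m} M = 0$. Assume $d \ge 1$, and denote by $\phi_i: \Omega_{n_i}^R(k) \to M$ the composition of the canonical inclusion into the direct sum with the given surjection. Any $\phi_i$ with $n_i = 0$ must be zero because $\depth_R M = d \ge 1$ forces $\Soc(M)=0$, so I may assume $n_i \ge 1$ throughout.

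The main step is to show that $\phi_i(\Omega_{n_i}^R(k)) \subseteq \mathfrak{m} M$ whenever $1 \le n_i < d$. From $\depth_R M = d$ we have $\Ext^{n_i}_R(k,M)=0$, hence by the standard dimension shift $\Ext^1_R(\Omega_{n_i-1}^R(k),M)=0$. Applying $\Hom_R(-,M)$ to the short exact sequence
\[
0\to \Omega_{n_i}^R(k) \to F_{n_i-1} \to \Omega_{n_i-1}^R(k)\to 0
\]
coming from a minimal free resolution of $k$, this gives an extension $\widetilde{\phi}_i: F_{n_i-1} \to M$ of $\phi_i$. Since minimality puts $\Omega_{n_i}^R(k) \subseteq \mathfrak{m} F_{n_i-1}$, we obtain
\[
\phi_i(\Omega_{n_i}^R(k)) = \widetilde{\phi}_i(\Omega_{n_i}^R(k)) \subseteq \mathfrak{m}\, \widetilde{\phi}_i(F_{n_i-1}) \subseteq \mathfrak{m} M.
\]

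Setting $M' := \sum_{n_i\ge d} \phi_i(\Omega_{n_i}^R(k))$, the above yields $M = M' + \mathfrak{m} M$, so $M = M'$ by Nakayama. Each summand contributing to $M'$ is MCM Ulrich, hence their direct sum satisfies $\mathfrak{m} N = (x_1,\ldots,x_d) N$, and this equation descends to $\mathfrak{m} M = (x_1,\ldots,x_d) M$; combined with $M$ being MCM, this makes $M$ Ulrich. The only real obstacle is handling the potentially non-MCM summands with $n_i < d$, and the $\Ext$-vanishing used to extend $\phi_i$ to the free cover, together with Nakayama, is the trick that disposes of them.
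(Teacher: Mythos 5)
Your proof is correct, but it follows a genuinely different route from the one in the paper. The paper passes to $\overline{R}=R/(x_1,\dots,x_d)$ for a minimal reduction $(x_1,\dots,x_d)$ of $\fm$ (legitimate because $M$ is MCM, so the $x_i$ form an $M$-regular sequence), invokes \cite[Cor.~5.3]{Tak06} to see that $\overline{M}$ is still a homomorphic image of a finite direct sum of $\overline{R}$-syzygies of $k$, and then uses \cite[Lem.~2.1]{GGP18} together with $\overline{\fm}^2=0$ to conclude $\overline{\fm}\subseteq \Soc(\overline{R})\subseteq \ann_{\overline{R}}(\overline{M})$, i.e.\ $\fm M=(x_1,\dots,x_d)M$. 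You instead stay over $R$: the vanishing $\Ext^{n_i}_R(k,M)=0$ for $n_i<d=\depth_R M$ lets you lift each $\phi_i$ to the free cover $F_{n_i-1}$, so the images of the low syzygies land in $\fm M$ and are discarded by Nakayama, reducing everything to the Brennan--Herzog--Ulrich fact that $\Omega_n^R(k)$ is Ulrich for $n\ge d$. Both arguments are sound; the paper's socle-annihilation mechanism is the one that later generalizes to Corollary~\ref{funul} (images under half-exact functors), while your lifting trick is elementary and isolates exactly why the summands with $n_i<d$ are harmless. One small point you should make explicit at the end: \cite[Prop.~2.2.(2)]{GTT15} gives $\fm N=(\mathbf{y})N$ for \emph{some} system of parameters, so to use the \emph{fixed} sequence $x_1,\dots,x_d$ uniformly across the summands you should either note that for an MCM Ulrich module $N$ and any reduction $(\mathbf{x})$ of $\fm$ one has $e(N)=\lambda(N/(\mathbf{x})N)\ge \mu(N)$ with equality forcing $\fm N=(\mathbf{x})N$ (as in Remark~\ref{rmk:inequality}), or sidestep the issue by observing that a finite direct sum of MCM Ulrich modules is Ulrich (additivity of $e$ and $\mu$) and that the equality $\fm N=(\mathbf{y})N$ passes to the MCM quotient $M$, which is then Ulrich by \cite[Prop.~2.2.(2)]{GTT15}.
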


\begin{proof}
	We can pass to the faithfully flat extension $R[X]_{\fm[X]}$ and assume that the residue field $k$ is infinite (\cite[Prop.~2.2.(3)]{GTT15}). If $R$ is a field, then $M$ is an $R$-vector space, which is Ulrich. So we may assume that $R$ is not a field. Since $k$ is infinite, we can choose a minimal reduction $x_1,\cdots,x_d\in \fm \smallsetminus \fm^2$ of $\fm$, hence $\fm^2=(x_1,\cdots,x_d)\fm$, and $x_1,...,x_d$ is an $R$-regular sequence. Set $\overline R:=R/(x_1,\dots,x_d)$, and for an $R$-module $N$, let $\overline N$ denote the $\overline R$-module $ N \otimes_R \overline R \cong N/(x_1,\dots,x_d)N $. Note that $ \overline R $ is Artinian with the maximal ideal $\overline{\fm}$ such that $\overline{\fm}^2=0$. So $ \overline \fm \subseteq \Soc(\overline R) $. Since $M$ is MCM, $ x_1,\dots,x_d $ is also $ M $-regular. Therefore, along with \cite[Cor.~5.3]{Tak06}, we get that $ \overline{M} $ is a homomorphic image of a finite direct sum of some $ \overline R $-syzygy modules of $k$. So, by \cite[Lem.~2.1]{GGP18}, it follows that $\overline \fm \subseteq \Soc(\overline R) \subseteq \ann_{\overline R}(\overline M)$. Hence $ M/(x_1,\dots,x_d)M $ is annihilated by $\fm$ as an $R$-module, i.e., $\fm M=(x_1,\dots,x_d)M $. Thus $ M $ is an Ulrich $R$-module.  
\end{proof}

Let $\Mod(R)$ denote the category of all (finitely generated) $R$-modules.
	
\begin{lemma}\label{funreg}
	Let $N$ be an $R$-module and $x$ be an $N$-regular element. Let $F$ be a half-exact covariant linear functor on $\Mod(R)$. Then, there is an embedding $ F(N)/xF(N) \hookrightarrow F(N/xN) $.   
\end{lemma}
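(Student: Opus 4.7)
The plan is to apply the functor $F$ to the standard short exact sequence coming from $N$-regularity of $x$, and then to use the two hypotheses on $F$ (linearity and half-exactness) to read off the desired embedding directly.

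First I would write down the short exact sequence
\[
0 \lra N \stackrel{x \cdot}{\lra} N \lra N/xN \lra 0,
\]
which is exact because $x$ is $N$-regular. Applying $F$ and using half-exactness yields an exact sequence
\[
F(N) \stackrel{F(x \cdot)}{\lra} F(N) \lra F(N/xN)
\]
in the middle.

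Next I would use that $F$ is $R$-linear on morphisms: the morphism $x \cdot \colon N \to N$ is $x \cdot \Id_N$, so $F(x \cdot) = x \cdot F(\Id_N) = x \cdot \Id_{F(N)}$, i.e., the first map above is just multiplication by $x$ on $F(N)$. Consequently, the image of $F(x \cdot)$ equals $xF(N)$, and half-exactness gives that $xF(N)$ is precisely the kernel of $F(N) \to F(N/xN)$. Passing to the quotient yields an injection $F(N)/xF(N) \hookrightarrow F(N/xN)$, as required.

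There is no real obstacle: the only subtle point is noticing that $R$-linearity of $F$ on Hom-sets is exactly what is needed to identify $F(x \cdot_N)$ with $x \cdot_{F(N)}$, after which half-exactness does the rest. No further hypotheses on $F$ (such as full exactness or coefficients in modules) are needed.
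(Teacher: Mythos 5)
Your proposal is correct and is essentially identical to the paper's own proof: both apply $F$ to the short exact sequence $0 \to N \xrightarrow{x\cdot} N \to N/xN \to 0$, use linearity to identify $F(x\cdot)$ with multiplication by $x$ on $F(N)$, and then use half-exactness to conclude that $xF(N)$ is the kernel of $F(N) \to F(N/xN)$, giving the embedding of the quotient. No issues.
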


\begin{proof}
	Since $F$ is half-exact, covariant and linear, the exact sequence $ 0 \to N \xrightarrow{ x\cdot } N \xrightarrow{\pi} N/xN \to 0 $ induces another exact sequence $F(N) \xrightarrow{x\cdot} F(N) \xrightarrow{F(\pi)} F(N/xN)$, which gives $\dfrac{F(N)}{xF(N)}=\dfrac{F(N)}{\ker F(\pi)} \cong \text{Im} F(\pi) \hookrightarrow F(N/xN)$. 
\end{proof}

\begin{remark}
	For every fixed integer $i$ and each $R$-module $X$, the functors
	$\Tor^R_i(X,-)$ and $\Ext^i_R(X,-)$ are covariant half-exact linear functors.  
\end{remark} 


\begin{corollary}\label{funul}
	Assume that $R$ is CM of minimal multiplicity, infinite residue field and dimension $1$. Let $\{F_i\}_{i=0}^l$ be a finite collection of half-exact covariant linear functors on $\Mod(R)$, and let $M$ be an MCM $R$-module which is a homomorphic image of a finite direct sum of copies of $F_i(\syz^R_j(k))$. Then, $M$ is Ulrich.
\end{corollary}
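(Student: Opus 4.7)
The plan is to mirror the proof of Proposition~\ref{prop:exam-Ulrich-hom-image-syz-k}, using Lemma~\ref{funreg} to transport the annihilation of $\syz^R_j(k)/x\syz^R_j(k)$ across each functor $F_i$. Since the residue field is infinite and only finitely many syzygies $\syz^R_{j}(k)$ appear in the given presentation of $M$, I would first choose $x\in\fm\smallsetminus\fm^2$ that is simultaneously a minimal reduction of $\fm$ (so $\fm^2=x\fm$) and regular on $R$, on $M$, and on each $\syz^R_j(k)$ with $j\ge 1$ that occurs. Set $\oR:=R/(x)$; it is Artinian with maximal ideal $\overline\fm$ satisfying $\overline\fm^{\,2}=0$, so $\overline\fm\subseteq\Soc(\oR)$.

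For every $j\ge 1$ that appears, \cite[Cor.~5.3]{Tak06} yields that $\syz^R_j(k)/x\syz^R_j(k)$ is a homomorphic image of a finite direct sum of $\oR$-syzygy modules of $k$, and \cite[Lem.~2.1]{GGP18} tells us that each such syzygy is annihilated by $\Soc(\oR)\supseteq\overline\fm$. Consequently, $\fm\cdot\bigl(\syz^R_j(k)/x\syz^R_j(k)\bigr)=0$ as an $R$-module; the case $j=0$ is immediate since $\syz^R_0(k)=k$ is itself $\fm$-annihilated. I would then use $R$-linearity of each $F_i$ to transport the annihilation across the functor: if $aN=0$ for some $a\in R$, then $a\cdot\mathrm{id}_N$ is the zero element of $\Hom_R(N,N)$, so $F_i(a\cdot\mathrm{id}_N)=0$, and $R$-linearity identifies this map with $a\cdot\mathrm{id}_{F_i(N)}$, yielding $aF_i(N)=0$. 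Applying this with $N=\syz^R_j(k)/x\syz^R_j(k)$ and varying $a\in\fm$ shows that $\fm$ annihilates $F_i\!\bigl(\syz^R_j(k)/x\syz^R_j(k)\bigr)$. Lemma~\ref{funreg} then supplies an embedding
$$F_i(\syz^R_j(k))/xF_i(\syz^R_j(k))\;\hookrightarrow\;F_i\!\bigl(\syz^R_j(k)/x\syz^R_j(k)\bigr),$$
so $\fm$ annihilates $F_i(\syz^R_j(k))/xF_i(\syz^R_j(k))$ as well.

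Finally, right exactness of $-\otimes_R\oR$ realizes $\oM=M/xM$ as a homomorphic image of the finite direct sum of the $\fm$-annihilated modules $F_i(\syz^R_j(k))/xF_i(\syz^R_j(k))$, hence $\fm\oM=0$, i.e., $\fm M=xM$. Because $M$ is MCM of dimension one and $x$ is $M$-regular, $x$ is a system of parameters on $M$, and \cite[Prop.~2.2.(2)]{GTT15} then concludes that $M$ is Ulrich. The main step to get right is the functoriality transfer, where $R$-linearity of each $F_i$ is essential to guarantee that an element annihilating an argument module also annihilates its image under the functor.
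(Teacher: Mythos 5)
Your proposal is correct and follows essentially the same route as the paper's proof: choose $x\in\fm\smallsetminus\fm^2$ with $\fm^2=x\fm$, use \cite[Cor.~5.3]{Tak06} and \cite[Lem.~2.1]{GGP18} to see that $\fm$ kills $\syz^R_j(k)/x\syz^R_j(k)$, transport this annihilation through $F_i$ via linearity, apply Lemma~\ref{funreg} to embed $F_i(\syz^R_j(k))/xF_i(\syz^R_j(k))$ into $F_i(\syz^R_j(k)/x\syz^R_j(k))$, and conclude $\fm M=xM$ from the induced surjection onto $M/xM$. Your explicit justification that linearity forces $F_i(a\cdot\mathrm{id}_N)=a\cdot\mathrm{id}_{F_i(N)}$ is exactly the mechanism the paper invokes more tersely.
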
   

\begin{proof}
	We may assume that $R$ is not a field. There exist integers $n_{ij}\ge 0$ such that $\bigoplus_{i,j} F_i(\syz^R_j(k))^{\oplus n_{ij}} \to M \to 0$. Choose $x\in \fm\smallsetminus \fm^2$ with $\fm^2=x\fm$ so that $R/xR$ is Artinian and has minimal multiplicity. Since $x$ is $R$-regular, it is also $M$-regular. We have a surjection
	\begin{center}
		$\bigoplus_{i,j} \left(F_i(\syz^R_j(k))/xF_i(\syz^R_j(k))\right)^{\oplus n_{ij}} \longrightarrow M/xM \to 0$.
	\end{center}
	For $j=0$, since $k$, and hence $F_i(k)$ is already killed by $\fm$, so $F_i(k)/xF_i(k)=F_i(k)$ is killed by $\overline \fm$ the maximal ideal of $R/xR$. For $j>0$, $x$ is $\syz^R_j(k)$-regular (since it is $R$-regular), so by Lemma~\ref{funreg} $F_i(\syz^R_j(k))/xF_i(\syz^R_j(k))$ is a submodule of $F_i\left(\syz^R_j(k)/x\syz^R_j(k)\right)\cong F_i\left(\syz_{j-1}^{R/xR}(k)\oplus \syz_j^{R/xR}(k)\right)$, see \cite[Cor.~5.3]{Tak06}. By \cite[Lem.~2.1]{GGP18}, each $ \syz_{j-1}^{R/xR}(k) \oplus \syz_j^{R/xR}(k) $ is killed by $\Soc(R/xR)$, which contains $\overline \fm$ since $R/xR$ has minimal multiplicity. Thus, due to linearity of each $F_i$, each $F_i \left(\syz^R_j(k)/x\syz^R_j(k)\right) \cong F_i \left( \syz_{j-1}^{R/xR}(k) \oplus \syz_j^{R/xR}(k) \right)$ is killed by $\overline \fm$, hence its submodule $F_i(\syz^R_j(k))/xF_i(\syz^R_j(k))$ is also killed by $\overline \fm$. It follows that $M/xM$ is killed by $\overline \fm$ as $R/xR$-module. So $\fm M=xM$, hence $M$ is Ulrich.   
\end{proof}

\begin{remark}
	Since $\Hom_R(R,-)$ is a half-exact covariant linear functor, so Corollary~\ref{funul} generalizes Proposition~\ref{prop:exam-Ulrich-hom-image-syz-k} in dimension $1$ case.  
\end{remark}

For the contravariant functor case, one needs cohomological $\delta$-functors

\begin{lemma}\label{delta}
	Let $T=(T^i,\delta^i)$ be a linear contravariant cohomological $\delta$-functor on $\Mod(R)$. Let $N$ be an $R$-module and $x$ be an $N$-regular element. Then, for each $i$, there is an embedding $\dfrac{T^i(N)}{xT^i(N)}\hookrightarrow T^{i+1}(N/xN)$.  
\end{lemma}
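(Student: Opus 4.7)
The plan is essentially parallel to the proof of Lemma~\ref{funreg}, but working on the other side. I would start from the short exact sequence
\[
0 \longrightarrow N \xrightarrow{\,x\cdot\,} N \xrightarrow{\,\pi\,} N/xN \longrightarrow 0,
\]
which is exact precisely because $x$ is $N$-regular. Applying the contravariant cohomological $\delta$-functor $T=(T^i,\delta^i)$ reverses arrows and yields the usual long exact sequence
\[
\cdots \to T^i(N/xN) \xrightarrow{T^i(\pi)} T^i(N) \xrightarrow{T^i(x\cdot)} T^i(N) \xrightarrow{\delta^i} T^{i+1}(N/xN) \to \cdots
\]

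The crucial step, and really the only substantive point, is to identify the middle map. Because $T^i$ is a linear (i.e. additive, $R$-linear) functor, it commutes with scalar multiplication, so $T^i(x\cdot_N) = x\cdot_{T^i(N)}$; that is, it is multiplication by $x$ on the $R$-module $T^i(N)$. Hence the image of $T^i(x\cdot)$ is exactly $xT^i(N)$.

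Exactness at the second $T^i(N)$ then gives $\ker(\delta^i) = xT^i(N)$, so passing to the quotient produces the desired injection
\[
T^i(N)/xT^i(N) \; \hookrightarrow \; T^{i+1}(N/xN),
\]
induced by $\delta^i$. I do not foresee an obstacle: the only content beyond formal manipulation of the $\delta$-functor axioms is the $R$-linearity of $T^i$, which is hypothesized.
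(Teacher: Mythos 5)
Your argument is the same as the paper's: apply the contravariant $\delta$-functor to $0 \to N \xrightarrow{x\cdot} N \to N/xN \to 0$, use linearity to identify the induced middle map with multiplication by $x$ on $T^i(N)$, and conclude from exactness that $\delta^i$ induces an injection $T^i(N)/xT^i(N) \hookrightarrow T^{i+1}(N/xN)$. The proof is correct and complete.
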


\begin{proof}
	The short exact sequence $0\to N \xrightarrow{x\cdot} N \xrightarrow{\pi} N/xN \to 0$ induces an exact sequence $T^i(N/xN)\xrightarrow{T^i(\pi)}T^i(N)\xrightarrow{x\cdot} T^i(N)\xrightarrow{\delta^i} T^{i+1}(N/xN)$ for each $i$. Hence, $\dfrac{T^i(N)}{xT^i(N)}=\dfrac{T^i(N)}{\ker \delta^i}\cong \text{Im} \delta^i\hookrightarrow T^{i+1}(N/xN)$.   
\end{proof}

\begin{remark}
	For every fixed $R$-module $X$, the sequence $T(-):=\left(\Ext^i_R(-,X)\right)_{i\ge 0}$ gives a linear contravariant cohomological $\delta$-functor.  
\end{remark}  

\begin{corollary}\label{funul:2nd}
	Assume that $R$ is CM of minimal multiplicity, infinite residue field and dimension $1$. Let $\{T_j\}_{j=0}^l$ be a finite collection of cohomological contravariant linear $\delta$-functors, and write $T_j=(T_j^i,\delta^i_j)$ for each $j$. Suppose there exist non-negative integers $i_j,n_{ju}$ such that $\bigoplus_{j,u} T_j^{i_j}(\syz_u^R(k))^{\oplus n_{ju}}$ has an MCM homomorphic image $M$. Then, $M$ is Ulrich. 
\end{corollary}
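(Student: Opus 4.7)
The plan is to adapt the proof of Corollary~\ref{funul} by substituting Lemma~\ref{delta} for Lemma~\ref{funreg}, and to handle the $u=0$ summands separately since $x$ is not regular on $k$. First I would reduce modulo a well-chosen element: since $R$ has minimal multiplicity, dimension $1$, and infinite residue field, I can pick $x \in \fm \smallsetminus \fm^2$ with $\fm^2 = x\fm$, so that $\bar R := R/xR$ is Artinian with $\bar \fm \subseteq \Soc(\bar R)$. Because $M$ is MCM and $R$ is $1$-dimensional, $x$ is both $R$- and $M$-regular. Tensoring the given surjection $\bigoplus_{j,u} T_j^{i_j}(\syz_u^R(k))^{\oplus n_{ju}} \twoheadrightarrow M$ with $\bar R$ produces a surjection
$$\bigoplus_{j,u} \bigl(T_j^{i_j}(\syz_u^R(k))/x T_j^{i_j}(\syz_u^R(k))\bigr)^{\oplus n_{ju}} \twoheadrightarrow M/xM,$$
so it suffices to verify that every summand on the left is killed by $\bar \fm$; this will give $\fm M = xM$, whence $M$ is Ulrich by \cite[Prop.~2.2.(2)]{GTT15}.

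For pairs $(j,u)$ with $u \ge 1$, the module $\syz_u^R(k)$ embeds in a free module, so $x$ is regular on it, and Lemma~\ref{delta} provides an injection
$$\frac{T_j^{i_j}(\syz_u^R(k))}{x T_j^{i_j}(\syz_u^R(k))} \hookrightarrow T_j^{i_j+1}\!\bigl(\syz_u^R(k)/x\syz_u^R(k)\bigr).$$
By \cite[Cor.~5.3]{Tak06}, $\syz_u^R(k)/x\syz_u^R(k) \cong \syz_{u-1}^{\bar R}(k) \oplus \syz_u^{\bar R}(k)$, and by \cite[Lem.~2.1]{GGP18} this direct sum is annihilated by $\Soc(\bar R) \supseteq \bar \fm$, hence by $\fm$ as an $R$-module. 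Now the crucial use of linearity: for any $r \in \fm$, multiplication by $r$ on $\syz_u^R(k)/x\syz_u^R(k)$ is the zero map, so linearity of $T_j^{i_j+1}$ forces multiplication by $r$ on $T_j^{i_j+1}$ of this module to vanish, and the annihilator passes to the embedded submodule on the left.

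For the remaining case $u=0$, linearity of $T_j^{i_j}$ applied to the zero endomorphism of $k = \syz_0^R(k)$ shows that $\fm$ already annihilates $T_j^{i_j}(k)$, hence also $T_j^{i_j}(k)/xT_j^{i_j}(k)$. Combining the two cases, $\fm$ kills $M/xM$, so $\fm M \subseteq xM \subseteq \fm M$, giving $\fm M = xM$. The only subtle point beyond mirroring \ref{funul} is to correctly exploit $R$-linearity of the contravariant functors $T_j^{i_j+1}$ to transport annihilators through the connecting map in Lemma~\ref{delta}; once this is in place, and the $u=0$ summands are isolated, no further obstacle arises.
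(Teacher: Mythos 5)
Your proposal is correct and follows exactly the paper's intended argument: the paper's own proof of this corollary is just the one-line remark that one repeats the proof of Corollary~\ref{funul} with Lemma~\ref{delta} in place of Lemma~\ref{funreg}, which is precisely what you carry out (including the degree shift to $T_j^{i_j+1}$ and the separate treatment of the $u=0$ summands via linearity). No gaps.
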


\begin{proof}
	The proof is similar to that of Corollary~\ref{funul} by using Lemma~\ref{delta} in place of Lemma~\ref{funreg}.   
\end{proof}

%

As a consequence of Proposition~\ref{prop:exam-Ulrich-hom-image-syz-k}, we improve \cite[Thm.~4.1]{Gho} and \cite[Prop.~5.2]{GP19b}.

\begin{corollary}
	Suppose that $R$ is CM of minimal multiplicity. Assume $M$ and $N$ are nonzero $R$-modules which are homomorphic images of finite direct sums of some syzygy modules of $k$. Assume that $M$ is MCM. Then the following are equivalent:
	\begin{enumerate}[\rm (1)]
		\item $R$ is regular.
		\item $\Tor^R_i(M,N)=0$ for some $(d+1)$  consecutive values of $i\ge 1$.
		\item $\Ext^i_R(N,M)=0$ for some $(d+1)$  consecutive values of $i\ge 1$.
	\end{enumerate}
\end{corollary}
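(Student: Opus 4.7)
The trivial direction $(1) \Rightarrow (2), (3)$ is immediate: if $R$ is regular, then $\pd_R M, \pd_R N < \infty$, so all sufficiently high $\Tor$ and $\Ext$ vanish, giving any number of consecutive vanishings.

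For $(2) \Rightarrow (1)$, since $M$ is MCM and a homomorphic image of a finite direct sum of syzygy modules of $k$, Proposition~\ref{prop:exam-Ulrich-hom-image-syz-k} yields that $M$ is an Ulrich $R$-module of dimension $d$. Applying Proposition~\ref{prop:improvment-Ghosh-Puthen}(1) with $s=d$ to the $(d+1)$ consecutive vanishing of $\Tor^R_i(M,N)$ then gives $\pd_R N < \infty$. To conclude $R$ is regular, the plan is to induct on $d = \dim R$. Write $X := \bigoplus_i \syz^R_{j_i}(k)^{a_i}$ so that $X \twoheadrightarrow N$.

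For the base $d = 0$, pass to an infinite residue field. Then $R$ is Artinian of minimal multiplicity: either $R$ is a field (done) or $\fm^2 = 0$. In the latter case, $\pd_R N < \infty$ forces $N$ to be free and nonzero, so the surjection $X \twoheadrightarrow N$ splits off a free summand from $X$; but over a non-field $R$ with $\fm^2 = 0$ no $\syz^R_j(k)$ admits a free summand, a contradiction. For the inductive step with $d \geq 1$ and $\depth N \geq 1$, choose (via prime avoidance in $\fm \setminus \fm^2$, using that $k$ is infinite) an element $x$ that is both $R$- and $N$-regular. Then $\bar R = R/xR$ is CM of minimal multiplicity of dimension $d-1$; using \cite[Cor.~5.3]{Tak06} in the form $\syz^R_j(k)/x\,\syz^R_j(k) \cong \syz^{\bar R}_j(k) \oplus \syz^{\bar R}_{j-1}(k)$ for $j \geq 1$, both $\bar M$ (MCM) and $\bar N$ become nonzero homomorphic images of syzygy modules of $k$ over $\bar R$. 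Lemma~\ref{lem:cons-vanishing-Tor}(1) combined with the change-of-rings isomorphism $\Tor^R_i(\bar M, N) \cong \Tor^{\bar R}_i(\bar M, \bar N)$ (valid since $x$ is $N$-regular, via the collapsed spectral sequence of the regular element) yields the required $d$ consecutive vanishing over $\bar R$, and the inductive hypothesis gives $\bar R$ regular, whence $R$ is regular.

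The main obstacle is the residual case $\depth N = 0$, i.e.\ $\pd_R N = d$, in which no $N$-regular element exists and the naive reduction fails because $\pd_{\bar R}(N/xN)$ may be infinite. The plan here is to pass to the first syzygy $\syz^R_1(N)$, which has depth at least $1$ and projective dimension $d-1$, and which by a horseshoe/pullback argument applied to $0 \to K \to X \to N \to 0$ fits in a short exact sequence $0 \to \syz^R_1(N) \to K \oplus F_0 \to X \to 0$; the relation $\Tor^R_i(M, \syz^R_1(N)) \cong \Tor^R_{i+1}(M,N)$ lets one re-apply the argument to $\syz^R_1(N)$ once one verifies the homomorphic-image hypothesis (or replaces $N$ by a suitable high-enough syzygy that is a homomorphic image of $\bigoplus \syz^R_{i+j_l}(k)^{a_l}$ up to free summands). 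Finally, the implication $(3) \Rightarrow (1)$ is proved in parallel, replacing Proposition~\ref{prop:improvment-Ghosh-Puthen}(1) by Proposition~\ref{prop:improvment-Ghosh-Puthen}(3) to obtain $\pd_R N < \infty$ and then running the same induction. I expect the delicate step to be controlling the free summands in the horseshoe comparison so that the homomorphic-image-of-syzygies hypothesis propagates to $\syz^R_1(N)$.
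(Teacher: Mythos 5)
Your first two steps coincide with the paper's proof: $M$ is Ulrich by Proposition~\ref{prop:exam-Ulrich-hom-image-syz-k}, and Proposition~\ref{prop:improvment-Ghosh-Puthen}(1) (resp.\ (3)) converts the $(d+1)$ consecutive vanishings in (2) (resp.\ (3)) into $\pd_R N<\infty$. The divergence --- and the gap --- is in the final step. The paper closes immediately by quoting Martsinkovsky's theorem \cite[Prop.~7]{Mar96}: over a nonregular local ring, no \emph{nonzero} homomorphic image of a finite direct sum of syzygy modules of $k$ has finite projective dimension. You instead try to reprove this implication by induction on $d$, and your induction does not go through as written. The case $\depth N\ge 1$ and the Artinian base case are essentially fine (modulo choosing $x$ superficial so that minimal multiplicity descends), but the case $\depth N=0$ with $d\ge 1$ is exactly where your argument stops: replacing $N$ by $\syz_1^R(N)$ requires knowing that $\syz_1^R(N)$ is again a nonzero homomorphic image of syzygies of $k$, and the horseshoe construction only exhibits $\syz_1^R(N)$ as a quotient of $\syz_1^R(X)\oplus R^m$ for some free summand $R^m$ that you cannot simply discard. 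You flag this yourself as the ``delicate step,'' but it is not a technicality --- it is the entire content of the nonregularity obstruction, i.e.\ of Martsinkovsky's theorem. As it stands the implication $\pd_R N<\infty\Rightarrow R$ regular is asserted, not proved.

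If you want to avoid citing \cite{Mar96}, there is a clean alternative already mentioned in the paper's introduction: by \cite[Cor.~9]{Avr96}, every nonzero homomorphic image of a finite direct sum of syzygy modules of $k$ has maximal complexity, so $\cx_R(N)=\cx_R(k)$; since $\pd_R N<\infty$ gives $\cx_R(N)=0$, one gets $\cx_R(k)=0$, i.e.\ $\pd_R k<\infty$ and $R$ is regular. Either quote one of these results or supply a complete proof of the depth-zero reduction; at present that step is missing.
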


\begin{proof}
	By Proposition~\ref{prop:exam-Ulrich-hom-image-syz-k}, $M$ is Ulrich. In either of the cases (2) and (3), by Proposition~\ref{prop:improvment-Ghosh-Puthen}(1) and (3), we get that $\pd_R N<\infty$. Hence it follows from \cite[Prop.~7]{Mar96} that $R$ is regular.
\end{proof}

The following gives a refinement of the vanishing threshold of \cite[Thm.~4.3]{GP19b}.

\begin{proposition}\label{prop:vanishing-Tor-and-finite-pd}
	Suppose that $R$ is CM of minimal multiplicity. Let $M$ and $N$ be $R$-modules. Let $ t := \depth_R M $. If $ \Tor^R_i(M,N) = 0 $ for some $ (d+1) $  consecutive values of $ i \ge d-t+2 $, then either $ \pd_R M < \infty $ or $ \pd_R N < \infty $, and we also have $\Tor^R_{i\ge d-t+1}(M,N)=0$. 
\end{proposition}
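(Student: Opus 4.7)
The plan is to pass to a high-enough syzygy of $M$, which turns out to be a nonzero MCM Ulrich module over $R$, and then to invoke the machinery of Proposition~\ref{prop:improvment-Ghosh-Puthen}(1) together with the $(d+1)$-Tor-rigidity from Corollary~\ref{cor:Ulrich-mod-d+1-Tor-rigid}.

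First I would pass to the faithfully flat extension $R[X]_{\fm[X]}$ to assume $k$ is infinite; both the hypothesis and the conclusion are insensitive to this. If $\pd_R M<\infty$, then the Auslander--Buchsbaum formula gives $\pd_R M = d-t$, so $\Tor^R_i(M,N)=0$ for every $i\ge d-t+1$ and we are done. Hence assume $\pd_R M = \infty$; in particular, $R$ is singular.

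Next I would produce the Ulrich syzygy. Set $L:=\Omega^R_{d-t+1}(M)$, which is nonzero since $\pd_R M = \infty$, and MCM by iterating the depth lemma. Because $k$ is infinite and $R$ has minimal multiplicity, there is an $R$-regular sequence $\mathbf{x}=x_1,\dots,x_d\in\fm\setminus\fm^2$ with $\fm^2=\mathbf{x}\fm$. Writing $\bar R:=R/(\mathbf{x})$, we have $\bar\fm^2=0$ and $\bar\fm\neq 0$ (as $R$ is singular), so $\Soc(\bar R)=\bar\fm$. Viewing $L=\Omega^R_1(L_0)$ with $L_0:=\Omega^R_{d-t}(M)$ MCM, $L$ sits inside $\fm P_0$ for the free module $P_0$ in a minimal presentation $P_1\to P_0\to L_0\to 0$. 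Reducing modulo the $L$-regular sequence $\mathbf{x}$, the image of $\bar L$ lies in $\bar\fm\bar P_0=\Soc(\bar P_0)$, which is annihilated by $\bar\fm$; this gives $\fm L=\mathbf{x}L$, so $L$ is Ulrich of dimension $d$ by \cite[Prop.~2.2.(2)]{GTT15}.

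Then I would apply the Ulrich toolkit. Writing the hypothesis as $\Tor^R_i(M,N)=0$ for $i\in[i_0,i_0+d]$ with $i_0\ge d-t+2$, the shift $\Tor^R_j(L,N)\cong \Tor^R_{j+d-t+1}(M,N)$ valid for $j\ge 1$ translates this to $\Tor^R_j(L,N)=0$ for $j\in[n,n+d]$ with $n:=i_0-(d-t+1)\ge 1$. Proposition~\ref{prop:improvment-Ghosh-Puthen}(1) applied to the Ulrich $L$ of dimension $d$ forces $\pd_R N < n+d$, so in particular $\pd_R N<\infty$. Corollary~\ref{cor:Ulrich-mod-d+1-Tor-rigid} ($L$ is $(d+1)$-Tor-rigid-test) then extends the vanishing to $\Tor^R_j(L,N)=0$ for every $j\ge n$, equivalently $\Tor^R_i(M,N)=0$ for every $i\ge i_0$. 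In the worst case $i_0=d-t+2$ this gives the vanishing from $i=d-t+2$ onward; the remaining single index $i=d-t+1$ is obtained from the long exact sequence coming from $0\to L\to P_0\to L_0\to 0$, combining $\Tor^R_j(L,N)=0$ for all $j\ge 1$ with the $\pd_R N$ bound to force $\Tor^R_1(L_0,N)=\Tor^R_{d-t+1}(M,N)=0$.

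The main obstacle is the syzygy-to-Ulrich upgrade in the middle step, which is the only place where minimal multiplicity enters crucially via $\fm^2=\mathbf{x}\fm$; everything else is bookkeeping of Tor shifts together with an application of results already established in the section.
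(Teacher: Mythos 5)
Your overall route is the same as the paper's: pass to $L:=\Omega^R_{d-t+1}(M)$, observe that minimal multiplicity makes $L$ an MCM Ulrich module, shift the Tor window, and apply Proposition~\ref{prop:improvment-Ghosh-Puthen}(1). The paper simply cites \cite[Prop.~3.6]{umm} for the Ulrich property, whereas you re-derive it; your derivation is essentially right, but note one elided step: from the socle argument you only get $\fm L\subseteq \mathbf{x}P_0$, and to conclude $\fm L\subseteq \mathbf{x}L$ you need $\mathbf{x}P_0\cap L=\mathbf{x}L$, which holds because $\Tor^R_1(L_0,R/(\mathbf{x}))=0$ ($L_0$ being MCM and $\mathbf{x}$ being $R$-regular).

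The genuine gap is in the last claim, $\Tor^R_{i\ge d-t+1}(M,N)=0$. First, your rigidity argument (Corollary~\ref{cor:Ulrich-mod-d+1-Tor-rigid}) only yields vanishing from $i_0$ onward, and you only discuss bridging down to $d-t+1$ in the special case $i_0=d-t+2$; the hypothesis allows the window to start at any $i_0\ge d-t+2$, so in general there are several indices left uncovered, not one. Second, even for the single index you do address, the proposed mechanism fails: the sequence $0\to L\to P_0\to L_0\to 0$ gives $\Tor^R_1(L_0,N)\cong\Ker(L\otimes_R N\to P_0\otimes_R N)$, and neither $\Tor^R_{j\ge1}(L,N)=0$ nor the bound $\pd_R N\le d$ forces this kernel to vanish by any long-exact-sequence bookkeeping. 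What is actually needed is the Auslander--Lichtenbaum--Yoshida fact, cited in the paper as \cite[Lem.~2.2]{Yo}: if $X$ is MCM and $\pd_R N<\infty$, then $\Tor^R_{i\ge1}(X,N)=0$. Applied to $X=\Omega^R_{d-t}(M)$ once you know $\pd_R N<\infty$, it gives $\Tor^R_{i\ge d-t+1}(M,N)=0$ in one stroke for every $i_0$, making both the rigidity step and the patching step unnecessary. With that substitution your argument is complete.
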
   

\begin{proof}
	Assume that $ \pd_R M = \infty $. Since $R$ has minimal multiplicity, the syzygy module $ \Omega_{d-t+1}^R(M) $ is MCM and Ulrich \cite[Prop. 3.6]{umm}. From the hypothesis, it follows that
	\begin{center}
		$ \Tor^R_i(\Omega_{d-t+1}^R(M), N) = 0 $ for $ (d+1) $  consecutive values of $ i \ge 1 $,
	\end{center}
	and then Proposition~\ref{prop:improvment-Ghosh-Puthen}(1) implies that $\pd_R N<\infty$.
	
	 Finally, $\Tor^R_{i\ge d-t+1}(M,N)=\Tor^R_{i\ge 1}(\Omega^R_{d-t}(M),N) = 0$ if $\pd_R M<\infty $ (as then $\Omega^R_{d-t}(M)$ is free), or also if $\pd_R N <\infty$ (by \cite[Lem.~2.2]{Yo}).  
\end{proof}

\begin{corollary}\label{minrigid}
	Suppose that $R$ is CM of minimal multiplicity. Let $M$ be an $R$-module, and $ t := \depth_R M $. Then $ M $ is $ (2d-t+2) $-$ \Tor $-rigid. In particular, every MCM $ R $-module is $ (d+2) $-$ \Tor $-rigid, and every module is $ (2d+2) $-$ \Tor $-rigid.
\end{corollary}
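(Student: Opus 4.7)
The plan is to derive this directly from Proposition~\ref{prop:vanishing-Tor-and-finite-pd} by a short counting argument, since the substantive work has already been done there. Suppose $\Tor^R_i(M,N) = 0$ for all $n \le i \le n + (2d - t + 2) - 1 = n + 2d - t + 1$, where $n \ge 1$. The goal is to locate within this window $d+1$ consecutive indices $i$ with $i \ge d-t+2$, after which the proposition applies.

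Consider the intersection $[n,\, n + 2d - t + 1] \cap [d-t+2,\, \infty)$. When nonempty, this is a set of consecutive integers of cardinality $\min\{\,2d - t + 2,\; n + d\,\}$. Since $t = \depth_R M \le \dim_R M \le d$, we have $2d - t + 2 \ge d + 2 \ge d+1$; and since $n \ge 1$, we have $n + d \ge d + 1$. Hence the intersection contains at least $d+1$ consecutive integers, each $\ge d - t + 2$, on which $\Tor$ vanishes by hypothesis. Proposition~\ref{prop:vanishing-Tor-and-finite-pd} then yields $\Tor^R_i(M, N) = 0$ for all $i \ge d - t + 1$.

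To upgrade this to $\Tor^R_{i \ge n}(M,N) = 0$, note that if $n \ge d - t + 1$, the conclusion is immediate. Otherwise $n \le d - t$, and the remaining indices $i \in [n,\, d-t]$ lie inside the original hypothesis window $[n,\, n + 2d - t + 1]$ (the endpoint $d - t$ is certainly $\le n + 2d - t + 1$), so the relevant $\Tor$'s already vanish by assumption. Combining the two ranges gives $\Tor^R_{i \ge n}(M,N) = 0$, which is precisely $(2d - t + 2)$-$\Tor$-rigidity of $M$.

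The two ``in particular'' statements are then immediate: for MCM $M$ take $t = d$ to get $(d+2)$-rigidity; for arbitrary $M$ use $t \ge 0$, and the trivial fact that $m$-$\Tor$-rigidity implies $m'$-$\Tor$-rigidity for any $m' \ge m$, to obtain $(2d+2)$-rigidity. I do not foresee any real obstacle here---the proof is essentially bookkeeping around the thresholds of Proposition~\ref{prop:vanishing-Tor-and-finite-pd}, and the only mild point to be careful about is verifying that the hypothesis window remains large enough for all admissible values of the pair $(n,t)$.
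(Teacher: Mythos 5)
Your argument is correct and is essentially the paper's own proof: both reduce to Proposition~\ref{prop:vanishing-Tor-and-finite-pd} by locating $d+1$ consecutive vanishing indices at or above the threshold $d-t+2$ inside the hypothesis window, and then patch the remaining low-degree indices using the window itself. The only cosmetic difference is that the paper simply takes the top $d+1$ indices $[n+d-t+1,\,n+2d-t+1]$ and finishes via $\pd\le d$, whereas you count the intersection with $[d-t+2,\infty)$ and invoke the proposition's conclusion $\Tor^R_{i\ge d-t+1}(M,N)=0$; both are sound.
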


\begin{proof}
	Let $N$ be an $R$-module, and suppose there exists an integer $ n \ge 1 $ such that $ \Tor^R_{n \le i \le n+2d-t+1}(M,N) = 0 $. Then $ \Tor^R_{n+d-t+1 \le i \le n+2d-t+1}(M,N) = 0 $, and these are $ (n+2d-t+1)-(n+d-t+1)+1 = d+1 $  consecutive values $ \ge n+d-t+1 \ge d-t+2 $. Hence, by Proposition~\ref{prop:vanishing-Tor-and-finite-pd}, either $ \pd_R M < \infty $ or $ \pd_R N < \infty $, i.e., either $ \pd_R M \le d $ or $ \pd_R N \le d $. Since $ n+2d-t+1 > d+1 $, it follows that $ \Tor^R_{ i \ge n+2d-t+1 }(M,N) = 0 $. The claim about MCM module follows since then $ t = d $. The claim about all modules follows since $ 2d+2 \ge 2d-t+2 $, and if a module is $j$-$\Tor$-rigid, then obviously it is $(j+1)$-$\Tor$-rigid.
\end{proof}

Next we give a criterion for a module over a CM local ring of minimal multiplicity to be free in terms of vanishing of certain Ext modules. For that, we need two elementary lemmas, which are possibly well known.

\begin{lemma}\label{nonvanish}
	Let $M \neq 0$ be an $R$-module such that $ \pd_R M = n < \infty $. Then $ \Ext^n_R(M,R) \ne 0 $. 
\end{lemma}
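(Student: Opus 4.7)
The plan is to use a minimal free resolution together with Nakayama's lemma. Since $(R,\fm,k)$ is local and $\pd_R M = n < \infty$, the module $M$ admits a minimal free resolution
\[
0 \longrightarrow F_n \stackrel{\partial_n}{\longrightarrow} F_{n-1} \longrightarrow \cdots \longrightarrow F_1 \stackrel{\partial_1}{\longrightarrow} F_0 \longrightarrow M \longrightarrow 0
\]
of length exactly $n$, so in particular $F_n \ne 0$. Minimality means every differential $\partial_i$ is represented (with respect to a basis) by a matrix with entries in $\fm$.

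Next, I would apply $\Hom_R(-,R)$ to the truncated complex $F_\bullet$ to compute $\Ext^n_R(M,R)$ as the cokernel of the map
\[
\partial_n^{\ast}\colon \Hom_R(F_{n-1},R) \longrightarrow \Hom_R(F_n,R).
\]
Since $\partial_n$ is given by a matrix with entries in $\fm$, its dual $\partial_n^{\ast}$ is represented by (the transpose of) the same matrix, so $\Image(\partial_n^{\ast}) \subseteq \fm \Hom_R(F_n,R)$. Consequently there is a surjection
\[
\Ext^n_R(M,R) \;=\; \coker(\partial_n^{\ast}) \;\twoheadrightarrow\; \Hom_R(F_n,R)\big/\fm\Hom_R(F_n,R).
\]

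The last step is to observe that $\Hom_R(F_n,R)$ is a nonzero finitely generated free $R$-module (since $F_n \ne 0$), so Nakayama's lemma forces $\Hom_R(F_n,R)/\fm\Hom_R(F_n,R) \ne 0$. Hence $\Ext^n_R(M,R) \ne 0$, as desired. There is no real obstacle here; the only subtle point is to invoke minimality of the resolution correctly so that dualization lands inside $\fm \Hom_R(F_n,R)$, which is exactly what lets Nakayama finish the argument.
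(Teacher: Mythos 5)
Your proof is correct and is essentially the same as the paper's: both compute $\Ext^n_R(M,R)$ as the cokernel of $\partial_n^{*}\colon F_{n-1}^{*}\to F_n^{*}$ from a minimal free resolution and use that the entries of $\partial_n$ (hence of $\partial_n^{*}$) lie in $\fm$, so the cokernel is nonzero by Nakayama. Your write-up just makes the final Nakayama step slightly more explicit.
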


\begin{proof}
	Choose a minimal free resolution $0\to F_n\xrightarrow{\partial}F_{n-1}\to \cdots \to F_0 \to 0$ of $M$. Then, $\Ext^n_R(M,R)=F_n^*/\text{Im} (\partial^*)$, where $(-)^* := \Hom_R(-,R)$. Since the entries of $\partial$ are in $\fm$, the entries of $\partial^*:F_{n-1}^*\to F_n^*$ are also in $\fm$, hence $F_n^*/\text{Im} (\partial^*) \neq 0$.
\end{proof}

\begin{lemma}\label{lem:Ext-vanishing}
	Let $l,m \ge 0$ and $n \ge 1$ be integers. Let $\Ext^i_R(M,N)=0$ for all $n\le i 
	\le n+m$, and $\Ext^j_R(M, R)=0$ for all $n+1\le j \le n+l+m$. Then
	\begin{enumerate}[\rm (1)]
		\item $\Ext^j_R(M, \Omega_l^R(N))=0 \mbox{ for all } n+l\le j \le n+l+m$.
		\item $\Ext^j_R(\Omega_l^R(M), \Omega_l^R(N))=0 \mbox{ for all } n\le j \le n+m$.
	\end{enumerate}
\end{lemma}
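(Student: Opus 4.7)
The plan is to establish part (1) by induction on $l$, descending through a free resolution of $N$ one step at a time, and then deduce part (2) from (1) by a standard dimension-shift in the first variable of $\Ext$.

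For part (1), I would prove by induction on $r \in \{0, 1, \ldots, l\}$ the following statement: $\Ext^j_R(M, \Omega_r^R(N)) = 0$ for all $n+r \le j \le n+r+m$. The base case $r=0$ is exactly the hypothesis of the lemma. For the induction step from $r$ to $r+1$ (where $r < l$), apply $\Hom_R(M, -)$ to the short exact sequence $0 \to \Omega_{r+1}^R(N) \to F_r \to \Omega_r^R(N) \to 0$ coming from a free resolution of $N$; the resulting long exact sequence yields
\[
\Ext^{j-1}_R(M, \Omega_r^R(N)) \longrightarrow \Ext^j_R(M, \Omega_{r+1}^R(N)) \longrightarrow \Ext^j_R(M, F_r).
\]
The right term is a finite direct sum of copies of $\Ext^j_R(M, R)$ and therefore vanishes for $n+1 \le j \le n+l+m$; by the inductive hypothesis, the left term vanishes for $n+r+1 \le j \le n+r+m+1$. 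Since $r+1 \le l$ implies $n+r+m+1 \le n+l+m$, the intersection of these two ranges is exactly $n+(r+1) \le j \le n+(r+1)+m$, which is what is needed. Iterating up to $r=l$ yields (1).

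For part (2), I would use the standard dimension-shift isomorphism
\[
\Ext^j_R\bigl(\Omega_l^R(M), X\bigr) \;\cong\; \Ext^{j+l}_R(M, X) \quad \text{for all } j \ge 1 \text{ and any $R$-module } X,
\]
obtained by repeatedly invoking the long exact sequence of $\Ext$ attached to the short exact sequences $0 \to \Omega_{k+1}^R(M) \to F_k \to \Omega_k^R(M) \to 0$ and using that free modules have vanishing positive $\Ext$. Taking $X = \Omega_l^R(N)$ and substituting $i = j+l$, part (1) gives vanishing of $\Ext^{j+l}_R(M, \Omega_l^R(N))$ precisely in the range $n \le j \le n+m$. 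The hypothesis $n \ge 1$ is exactly what is needed to keep us in the regime $j \ge 1$ where the dimension-shift isomorphism is valid.

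There is no substantive obstacle; the proof is essentially bookkeeping of index ranges. The key numerical point is that the hypothesis $\Ext^j_R(M, R)=0$ is supplied precisely on $[n+1,\, n+l+m]$, which is just long enough to sustain $l$ consecutive inductive shifts of the $\Ext^i_R(M, N)$-vanishing window of length $m+1$ without losing any of the originally given consecutive vanishings.
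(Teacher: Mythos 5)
Your proof is correct and follows essentially the same route as the paper: induction along the syzygy short exact sequences $0 \to \Omega_{r+1}^R(N) \to F_r \to \Omega_r^R(N) \to 0$ for part (1), and the standard first-variable dimension shift $\Ext^j_R(\Omega_l^R(M), -) \cong \Ext^{j+l}_R(M,-)$ for part (2). The index bookkeeping, including the observation that the window $[n+1, n+l+m]$ of $\Ext^j_R(M,R)$-vanishing is exactly long enough to sustain the $l$ shifts, matches the paper's argument.
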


\begin{proof}
	(1) We use induction on $l$. There is nothing to prove when $l=0$. Suppose $l \ge 1$. Consider an exact sequence $0\to \syz_l^R(N) \to F \to \syz_{l-1}^R(N) \to 0$, where $F$ is a free $R$-module. It induces another exact sequence
	\begin{equation}\label{s.e.s-Ext}
		\Ext^i_R(M,\syz_{l-1}^R(N))\to \Ext^{i+1}_R(M,\syz_l^R(N))\to \Ext^{i+1}_R(M,F)
	\end{equation}
	for each $i$. By the induction hypothesis, we have
	\begin{equation}\label{Ext-vanshing}
		\Ext^i_R(M, \Omega_{l-1}^R(N))=0 \mbox{ for all } n+(l-1)\le i \le n+(l-1)+m.
	\end{equation}
	Along with the given hypothesis, it follows from \eqref{s.e.s-Ext} and \eqref{Ext-vanshing} that \begin{center}
		$\Ext^j_R(M,\syz_l^R(N)) = 0$ for all $n+l\le j \le n+l+m$.
	\end{center}
	This completes the inductive step, and hence the proof of (1).
	
	(2) For every $ n\le j \le n+m$,
	\begin{align*}
		\Ext^j_R(\Omega_l^R(M), \Omega_l^R(N)) &\cong \Ext^{j+1}_R(\Omega_{l-1}^R(M), \Omega_l^R(N)) \cong \cdots \\ &\cong \Ext^{j+l}_R(M, \Omega_l^R(N)) = 0 \; \mbox{ [by (1)]}
	\end{align*}
	as $n+l\le j+l \le n+l+m$.  
\end{proof}

\begin{proposition}\label{prop:min-mult-ARC}
	Suppose $R$ is CM of minimal multiplicity. Let $n\ge 1$ be an integer. Set $t := \depth(N)$. Let $\Ext^i_R(M,N)=\Ext^j_R(M, R)=0$ for all $n\le i \le n+d$ and $n\le j \le n+2d-t+1$. Then, either $\pd_R M \le n-1$, or $\pd_R N < \infty$.
\end{proposition}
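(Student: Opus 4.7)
My plan is to argue by contradiction: assuming $\pd_R N = \infty$, I show $\pd_R M \le n-1$. The basic strategy is to use part of the $\Ext_R^\bullet(M,R)$-vanishing hypothesis to push the vanishing of $\Ext_R^\bullet(M,N)$ up through syzygies until the target becomes MCM Ulrich, then invoke Proposition~\ref{prop:improvment-Ghosh-Puthen}.(3) to obtain a preliminary bound on $\pd_R M$, and finally tighten that bound using the remaining $\Ext_R^\bullet(M,R)$-vanishings together with Lemma~\ref{nonvanish}.

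Concretely, I first set $L := \syz_{d-t+1}^R(N)$. Since $\depth_R N = t$ and $\pd_R N = \infty$, the module $L$ is MCM with $\pd_R L = \infty$. Because $R$ is CM of minimal multiplicity, \cite[Prop.~3.6]{umm}---the same invocation used in the proofs of Corollary~\ref{cor:min-mult-comlete-inter} and Proposition~\ref{prop:vanishing-Tor-and-finite-pd}---forces $L$ to be an Ulrich $R$-module of dimension $d$.

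Next, I would apply Lemma~\ref{lem:Ext-vanishing}.(1) with $l = d-t+1$ and $m = d$ to transfer the vanishing from $N$ to $L$. The input conditions of the lemma read $\Ext_R^i(M,N) = 0$ for $n \le i \le n+d$ and $\Ext_R^j(M,R) = 0$ for $n+1 \le j \le n+2d-t+1$, both granted by hypothesis, and the conclusion becomes
\[
\Ext_R^j(M,L) = 0 \quad \text{for all } n+d-t+1 \le j \le n+2d-t+1,
\]
a block of $d+1$ consecutive vanishings starting at the positive integer $n+d-t+1$. Since $L$ is Ulrich of dimension $d$, Proposition~\ref{prop:improvment-Ghosh-Puthen}.(3) then yields $\pd_R M \le n+d-t$.

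To finish, suppose toward a contradiction that $\pd_R M \ge n$, and set $p := \pd_R M$. Then $n \le p \le n+d-t \le n+2d-t+1$, so the hypothesis forces $\Ext_R^p(M,R) = 0$; but Lemma~\ref{nonvanish} forces $\Ext_R^p(M,R) \ne 0$, a contradiction. Hence $\pd_R M \le n-1$, as desired. The one mildly subtle choice is to take $l = d-t+1$ rather than $l = d-t$: although $\syz_{d-t}^R(N)$ is already MCM, one further syzygy is needed in order that minimal multiplicity guarantees Ulrichness, and this extra index is exactly what the slightly longer vanishing range $[n,n+2d-t+1]$ of $\Ext_R^\bullet(M,R)$ is calibrated to afford.
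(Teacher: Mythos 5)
Your proposal is correct and follows essentially the same route as the paper: pass to the Ulrich syzygy $\Omega_{d-t+1}^R(N)$ via Lemma~\ref{lem:Ext-vanishing}, apply Proposition~\ref{prop:improvment-Ghosh-Puthen}.(3) to get $\pd_R M \le n+d-t$, and then use Lemma~\ref{nonvanish} with the $\Ext_R^{\bullet}(M,R)$-vanishing on $[n,\,n+2d-t+1]$ to push the bound down to $n-1$. The index bookkeeping (in particular the choice $l=d-t+1$ so that minimal multiplicity yields Ulrichness) matches the paper's proof exactly.
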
 

\begin{proof}
	Suppose $\pd_R N = \infty$. Note that $\syz_{d-t}^R(N)$ is MCM. Since $R$ has minimal multiplicity, $\syz_{d-t+1}^R(N)$ is  Ulrich. In view of the given hypothesis, by Lemma~\ref{lem:Ext-vanishing}, it follows that $\Ext^j_R(M,\syz_{d-t+1}^R(N)) = 0$ for all $n+d-t+1\le j \le n+2d-t+1$.
	Therefore, by Proposition~\ref{prop:improvment-Ghosh-Puthen}.(3), $\pd_R M \le n+d-t$. Moreover, since $\Ext^j_R(M, R)=0$ for all $n\le j \le n+2d-t+1$, Lemma~\ref{nonvanish} yields that $\pd_R M \le n-1$.
\end{proof}

	As a consequence of Proposition~\ref{prop:min-mult-ARC}, we have the following freeness criteria.
	
\begin{corollary}\label{cor:ARC-min-mult}
	Suppose $R$ is CM of minimal multiplicity. Set $t = \depth(M)$. Let
	\begin{center}
		$\Ext^i_R(M,M)=\Ext^j_R(M, R)=0$ for all $1\le i \le d+1$ and $1\le j \le 2d-t+2$.
	\end{center}
	Then $M$ is free.
\end{corollary}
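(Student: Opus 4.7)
The plan is to invoke Proposition~\ref{prop:min-mult-ARC} directly with the choices $N = M$ and $n = 1$. With these substitutions, the two Ext-vanishing ranges in that proposition, namely $n \le i \le n+d$ and $n \le j \le n+2d-t+1$, become $1 \le i \le d+1$ and $1 \le j \le 2d-t+2$ respectively, which are precisely the hypotheses of the corollary. Hence the conclusion of Proposition~\ref{prop:min-mult-ARC} yields either $\pd_R M \le n-1 = 0$ (in which case $M$ is already free over the local ring $R$ and we are done), or $\pd_R M < \infty$.

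In the remaining case I would assume $M \ne 0$ (else $M$ is trivially free) and $0 < \pd_R M < \infty$, and derive a contradiction. Since $R$ is Cohen--Macaulay with $\depth R = d$, the Auslander--Buchsbaum formula gives $\pd_R M = d - t$. Because $\pd_R M \ge 1$, we have $d - t \ge 1$, so $d - t$ lies in the interval $[1,\, 2d-t+2]$. Then the hypothesis $\Ext^j_R(M,R)=0$ for $1 \le j \le 2d-t+2$ forces $\Ext^{d-t}_R(M,R) = 0$. On the other hand, Lemma~\ref{nonvanish} asserts that $\Ext^{\pd_R M}_R(M,R) = \Ext^{d-t}_R(M,R) \ne 0$. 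This contradiction shows $d = t$, hence $\pd_R M = 0$, and $M$ is free.

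The main (rather minor) obstacle is only the bookkeeping of numerical bounds: once Proposition~\ref{prop:min-mult-ARC} is in place, the corollary is a routine chaining with the Auslander--Buchsbaum formula and the non-vanishing Lemma~\ref{nonvanish}, so no additional input is required.
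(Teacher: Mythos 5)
Your proposal is correct and follows essentially the same route as the paper: apply Proposition~\ref{prop:min-mult-ARC} with $N=M$ and $n=1$ to get $\pd_R M<\infty$, then use Lemma~\ref{nonvanish} together with the hypothesis $\Ext^j_R(M,R)=0$ for $1\le j\le 2d-t+2$ (and Auslander--Buchsbaum, which bounds $\pd_R M=d-t$ inside that range) to force $\pd_R M=0$. The paper states this more tersely, but the content is identical.
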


\begin{proof}
	By Proposition~\ref{prop:min-mult-ARC}, $\pd_R M $ is finite. So Lemma~\ref{nonvanish} yields that $M$ is free.
\end{proof}

\section{Tor rigidity over deformations of rings of minimal multiplicity}

In this section, not only we strengthen \cite[Cor.~6.5]{GP19b}, but also we analyze the rigidity of an arbitrary module over a deformation of a CM local ring of minimal multiplicity. Using Proposition~\ref{prop:vanishing-Tor-and-finite-pd} in place of \cite[Thm.~4.3]{GP19b} in the proof of \cite[Thm.~6.4]{GP19b}, one should get the following refinement of \cite[Thm.~6.4]{GP19b}.

\begin{theorem}\label{thm:defor-min-mult-Tor-vanishing}
	Suppose $R$ is CM of minimal multiplicity. Let $ S = R/(f_1,\dots,f_c)R $, where $ f_1,\dots,f_c \in \fm $ is an $R$-regular sequence.  Let $M$ and $N$ be $S$-modules such that $M$ is MCM. Then, the following are equivalent:	
	\begin{enumerate}[\rm (1)]
		\item $ \Tor_i^S(M, N) = 0 $ for some $ (d+c+1) $ consecutive values of $ i \ge 2 $. 
		\item $ \Tor_i^S(M, N) = 0 $ for all $ i \ge 1 $. 
	\end{enumerate}	
	Moreover, if this holds true, then either $ \pd_R M < \infty $ or $ \pd_R N < \infty $.
\end{theorem}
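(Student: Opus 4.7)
The implication (2)$\Rightarrow$(1) is immediate. For (1)$\Rightarrow$(2), the paragraph preceding the statement suggests mimicking the proof of \cite[Thm.~6.4]{GP19b}, replacing the weaker rigidity ingredient used there with Proposition~\ref{prop:vanishing-Tor-and-finite-pd}. The main device is the Cartan--Eilenberg change-of-rings spectral sequence
\[
	E^2_{p,q} \;=\; \Tor^S_p\bigl(M,\ \Tor^R_q(S,N)\bigr) \;\Longrightarrow\; \Tor^R_{p+q}(M,N).
\]
Since $f_1,\dots,f_c$ is $R$-regular, the Koszul complex on $\mathbf{f}$ is a free $R$-resolution of $S$, and because $\mathbf{f}$ annihilates $N$, every Koszul differential on $K_\bullet \otimes_R N$ vanishes. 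Hence $\Tor^R_q(S,N) \cong N^{\binom{c}{q}}$ for $0 \le q \le c$, so $E^2_{p,q} \cong \Tor^S_p(M,N)^{\binom{c}{q}}$, with the spectral sequence supported in the strip $0 \le q \le c$.

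Assuming (1), $\Tor^S_i(M,N) = 0$ for $n \le i \le n+d+c$ with $n \ge 2$. The $E^2$-contributions to $\Tor^R_j(M,N)$ come from $(p,q)$ with $p \in [j-c, j]$; requiring all such $p$ to lie in the vanishing range gives $n+c \le j \le n+d+c$, which is $d+1$ consecutive values. Thus $\Tor^R_j(M,N) = 0$ throughout this range. Because $M$ is MCM over $S$, $\depth_R M = \dim S = d-c$, so $d - \depth_R M + 2 = c+2 \le n+c$, meeting the hypothesis of Proposition~\ref{prop:vanishing-Tor-and-finite-pd}. That proposition yields the ``moreover'' assertion ($\pd_R M < \infty$ or $\pd_R N < \infty$) together with the uniform vanishing $\Tor^R_i(M,N) = 0$ for every $i \ge c+1$.

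To upgrade this to $\Tor^S_i(M,N) = 0$ for every $i \ge 1$, I run the same spectral sequence in reverse. For the downward step: consider the diagonal $p+q = n-1+c$; hypothesis (1) annihilates every term with $q < c$, leaving only the survivor $E^2_{n-1, c} \cong \Tor^S_{n-1}(M,N)$. The incoming differentials $d^r$ $(r \ge 2)$ originate at $E^r_{n-1+r, c-r+1}$ with $n-1+r \in [n+1, n+c]$ (already in the vanishing range), while the outgoing ones land in positions with $q+r-1 > c$ (killed by $\binom{c}{q+r-1}=0$). Hence $E^\infty_{n-1, c} = E^2_{n-1, c}$, and this must abut to $\Tor^R_{n+c-1}(M,N) = 0$ (valid since $n+c-1 \ge c+1$), forcing $\Tor^S_{n-1}(M,N) = 0$. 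Iterating gives vanishing down to $i = 1$, and a symmetric upward induction isolating $E^2_{i,0}$ extends the vanishing to every $i > n+d+c$.

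The main obstacle is the delicate bookkeeping in this reverse step: at each inductive stage one must verify that the isolated survivor is a permanent cycle and that no other nonzero $E^2$-term contributes to the relevant abutment. A hands-on alternative that encodes the same data is to proceed by induction on $c$ using Gulliksen's change-of-rings long exact sequence for a single-step deformation; this trades the spectral sequence for a diagram chase at the cost of an outer induction.
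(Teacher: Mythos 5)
Your argument is correct, but it takes a genuinely different route from the paper. The paper proves (1)$\Rightarrow$(2) by induction on $c$: writing $S=R'/(f_c)$ with $R'=R/(f_1,\dots,f_{c-1})$, it uses the one-step change-of-rings long exact sequence \cite[Lem.~2.7]{GP19b} to transfer a window of $\Tor^S$-vanishing of length $d+c+1$ into a window of $\Tor^{R'}$-vanishing of length $d+c$ (shifted up by one degree), passes to $\Omega_1^{R'}(M)$ to restore the MCM hypothesis, invokes the inductive hypothesis (with Proposition~\ref{prop:vanishing-Tor-and-finite-pd} as the base case $c=0$), and then recovers the full $\Tor^S$-vanishing from the resulting $2$-periodicity $\Tor^S_{i+2}(M,N)\cong\Tor^S_i(M,N)$. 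You instead collapse all $c$ steps at once via the Cartan--Eilenberg spectral sequence $\Tor^S_p(M,\Tor^R_q(S,N))\Rightarrow\Tor^R_{p+q}(M,N)$, exploiting that $\mathbf{f}N=0$ forces $E^2_{p,q}\cong\Tor^S_p(M,N)^{\binom{c}{q}}$ supported in the strip $0\le q\le c$; your numerics (the window $[n+c,\,n+d+c]$ of $\Tor^R$-vanishing, the threshold $c+2=d-\depth_R M+2$, and the conclusion $\Tor^R_{i\ge c+1}(M,N)=0$) all check out, and your downward/upward isolation of the corner terms $E^2_{m-1,c}$ and $E^2_{j,0}$ is a valid replacement for the paper's periodicity argument. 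The paper's induction is more elementary (only long exact sequences, reusing \cite{GP19b} verbatim), while your version makes the degree shift by $c$ and the role of the Koszul strip transparent and avoids the outer induction; the inductive alternative you sketch at the end is essentially what the paper does.
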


\begin{proof}
	Clearly (2)$\implies$(1). So, we prove that (1)$\implies$(2), and the statement about projective dimension. We use induction on $c$. The base case that $c=0$ follows from Proposition~\ref{prop:vanishing-Tor-and-finite-pd}. For clarity, we also do the $c=1$ case:
	
	In this case, $S=R/(f_1)$. Since 	$\Tor_i^S(M,N)=0$ for some $(d+1+1)$ consecutive values of $i\ge 2$, in view of \cite[Lem.~2.7]{GP19b}, $\Tor^{R}_i(M,N)=0$ for some $(d+1)$-consecutive values of $ i \ge 2+1 = 3 $. Note that $ \depth_R(M) = \depth_S(M) = \dim(S) = \dim(R)-1 $. So, $ \Omega_1^R(M) $ is an MCM $R$-module, and $ \Tor^R_{i}(\Omega_1^R(M), N) = \Tor^R_{i+1}(M,N)=0 $ for $ (d+1) $-consecutive values of $i\ge 2$. Since $\Omega_1^R(M)$ is MCM, by Proposition~\ref{prop:vanishing-Tor-and-finite-pd}, either $ \pd_R \Omega_1^R(M) < \infty $ or $\pd_R N<\infty$, i.e., either $\pd_R M<\infty$ or $\pd_R N<\infty$. Proposition~\ref{prop:vanishing-Tor-and-finite-pd} also yields that $\Tor^R_{i \ge 1}(\Omega_1^R(M), N)=0$, hence $\Tor^R_{i\ge 2}(M,N)=0$. By \cite[Lem.~2.7]{GP19b}, it follows that $\Tor^S_{i+2}(M,N)\cong \Tor^S_i(M,N)$ for all $ i \ge 1 $. Since, by assumption, $\Tor^S_i(M,N)=0$ for some $(d+2)$-consecutive values of $i\ge 2$, we get that $\Tor^S_i(M,N)=0$ for all $i \ge 1$.
	
	For the inductive step, let $ c \ge 1 $. Set $R' := R/(f_1,\dots,f_{c-1})$, so that $ S = R'/(f_c) $. Since $ \Tor_i^S(M,N) = 0 $ for some $(d+c+1)$ consecutive values of $i\ge 2$, in view of \cite[Lem.~2.7]{GP19b}, we obtain $\Tor^{R'}_i(M,N)=0$ for some $(d+c)$ consecutive values of $i\ge 3$. Note that $ \depth_{R'} M = \depth_S M = \dim S = \dim(R')-1 $. So, $ \Omega_1^{R'}(M) $ is an MCM $R'$-module, and $ \Tor^{R'}_i(\Omega_1^{R'}(M), N) = \Tor^{R'}_{i+1}(M,N) = 0 $ for $(d+c)$ consecutive values of $ i \ge 2 $. Then, by induction hypothesis, $ \Tor^{R'}_i(\Omega_1^{R'}(M), N) = 0 $ for all $ i \ge 1 $, and $ \pd_R (\Omega_1^{R'}(M)) < \infty $ or $ \pd_{R} N < \infty $. Since $\pd_{R} R'<\infty$, this means either $ \pd_R M < \infty $ or $ \pd_R N < \infty $. It remains to show that $ \Tor^S_i(M,N) = 0 $ for all $ i \ge 1 $. Firstly, $ \Tor^{R'}_i(\Omega_1^{R'}(M), N) = 0 $ for all $ i \ge 1 $ yields that $ \Tor^{R'}_{j}(M,N) = 0 $ for all $ j \ge 2 $.  Hence, by \cite[Lem.~2.7]{GP19b}, $ \Tor^S_{i+2}(M,N) \cong \Tor^S_i(M, N) $ for all $ i \ge 1 $. Therefore, since by assumption, $ \Tor^S_i(M, N) = 0 $ for some $ d+c+1 \,(\ge 2) $ consecutive values of $ i \ge 2 $, so we get $ \Tor^S_i(M,N) = 0 $ for all $ i \ge 1 $.
\end{proof}

The following refines \cite[Cor.~6.5]{GP19b}. Note that the threshold in \cite[Cor.~6.5]{GP19b} is $2\dim(S) - \depth(M) - \depth(N)+c+2$, while in our next result, the threshold is $\dim(S)-\depth(M)+2$, particularly, it is $c$ independent.

\begin{corollary}\label{deformcor} 
	Suppose $R$ is CM of minimal multiplicity. Let $ S = R/(f_1,\dots,f_c)R $, where $ f_1,\dots,f_c \in \fm $ is an $R$-regular sequence. Let $M$ and $N$ be $S$-modules. Set $i_0:=\dim(S)-\depth(M)$. Then, the following are equivalent:	
	\begin{enumerate}[\rm (1)]
		\item  $\Tor_i^S(M,N)=0$ for some $(d+c+1)$ consecutive values of $i\ge i_0+2$. 
		\item $\Tor_i^S(M,N)=0$ for all $i\ge i_0+1$.  
	\end{enumerate}	
	Moreover, if this holds true, then  either $ \pd_R M < \infty $ or $ \pd_R N < \infty $.
\end{corollary}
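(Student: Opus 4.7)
The plan is to reduce the statement to the MCM case already handled by Theorem~\ref{thm:defor-min-mult-Tor-vanishing} by means of a syzygy shift over $S$. Since $R$ is CM and $f_1,\dots,f_c$ is $R$-regular, the quotient $S$ is also CM (of dimension $d-c$). Set $M' := \syz_{i_0}^S(M)$. Applying the depth lemma to the short exact sequences $0 \to \syz_{k+1}^S(M) \to S^{\beta_k} \to \syz_k^S(M) \to 0$ (together with $\depth_S(M) + i_0 = \dim(S)$) gives $\depth_S(M') = \dim(S)$, so $M'$ is an MCM $S$-module. The standard dimension-shift then supplies the isomorphism
\[
\Tor_i^S(M',N) \;\cong\; \Tor_{i+i_0}^S(M,N) \quad \text{for all } i \ge 1.
\]

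With this setup the equivalence is straightforward: (2)$\Rightarrow$(1) is immediate, and for (1)$\Rightarrow$(2) I translate the hypothesis via the Tor-shift. If $\Tor_i^S(M,N)=0$ for $(d+c+1)$ consecutive values of $i \ge i_0+2$, writing $i = j + i_0$ with $j \ge 2$ converts this to $\Tor_j^S(M',N) = 0$ for $(d+c+1)$ consecutive values of $j \ge 2$. Since $M'$ is MCM over $S$, Theorem~\ref{thm:defor-min-mult-Tor-vanishing} applies and yields $\Tor_j^S(M',N) = 0$ for all $j \ge 1$, which shifts back to $\Tor_i^S(M,N) = 0$ for all $i \ge i_0+1$.

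For the final clause, Theorem~\ref{thm:defor-min-mult-Tor-vanishing} further guarantees that $\pd_R M' < \infty$ or $\pd_R N < \infty$, so it suffices to show $\pd_R M' < \infty \Longleftrightarrow \pd_R M < \infty$. This follows from $\pd_R S < \infty$ (as $f_1,\dots,f_c$ is $R$-regular) combined with the observation that each short exact sequence $0 \to \syz_{k+1}^S(M) \to S^{\beta_k} \to \syz_k^S(M) \to 0$ is exact also as a sequence of $R$-modules; induction on $k$ then gives $\pd_R \syz_k^S(M) < \infty$ iff $\pd_R M < \infty$, and the case $k = i_0$ completes the argument. The heavy lifting has already been done in Theorem~\ref{thm:defor-min-mult-Tor-vanishing}; no genuine obstacle arises here, only the bookkeeping that the threshold $i_0 + 2$ in the general statement corresponds precisely to the threshold $2$ in the MCM statement after the syzygy shift.
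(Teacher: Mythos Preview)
Your proof is correct and follows essentially the same approach as the paper: take the $i_0$-th $S$-syzygy to reduce to the MCM case, invoke Theorem~\ref{thm:defor-min-mult-Tor-vanishing}, and then use $\pd_R S<\infty$ to pull the finiteness of $\pd_R M'$ back to $\pd_R M$. You have simply spelled out a few details (the depth-lemma justification and the $\pd_R$ equivalence) that the paper leaves implicit.
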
  

\begin{proof}
	It is enough to prove that (1) implies (2) and the statement about projective dimension. Note that $ \Omega^S_{i_0}(M) $ is an MCM $ S $-module. The condition (1) implies that $ \Tor^S_{i}(\Omega^S_{i_0}(M), N) = 0 $ for $ (d+c+1) $ consecutive values of $ i \ge 2 $. Hence, by Theorem~\ref{thm:defor-min-mult-Tor-vanishing}, it follows that $ \Tor^S_i(\Omega^S_{i_0}(M), N) = 0 $ for all $i \ge 1$. Therefore $\Tor_i^S(M,N)=0$ for all $i\ge i_0+1$. Moreover either $ \pd_R \Omega^S_{i_0}(M) < \infty $ or $ \pd_R N < \infty $, hence either $ \pd_R M < \infty $ (as $ \pd_R S < \infty$) or $ \pd_R N < \infty $.
\end{proof}

If $N$ is embedded in an $S$-module of finite projective dimension, then the threshold $i_0+2$ in Corollary~\ref{deformcor} can be further improved to $i_0+1$. In Lemma~\ref{projembed}, a few classes of such modules $N$ are described.
	
\begin{corollary}\label{gdeform}
	Suppose $R$ is CM of minimal multiplicity. Let $ S = R/(f_1,\dots,f_c)R $, where $ f_1,\dots,f_c \in \fm $ is an $R$-regular sequence.  Let $M$ and $N$ be $S$-modules. Set $i_0:=\dim(S)-\depth(M)$. Suppose $N$ can be embedded in an $S$-module of finite projective dimension over $S$. Then, the following are equivalent:
	\begin{enumerate}[\rm (1)]
		\item $ \Tor_i^S(M,N) = 0 $ for some $ (d+c+1) $ consecutive values of $i\ge i_0+1$. 
		\item $ \Tor_i^S(M,N) = 0 $ for all $ i \ge i_0+1 $.   
	\end{enumerate}	
	Moreover, if this holds true, then  either $\pd_R M<\infty$ or $\pd_R N<\infty$.
\end{corollary}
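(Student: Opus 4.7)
The plan is to reduce the hypothesis to that of Corollary~\ref{deformcor}, which requires $(d+c+1)$ consecutive $\Tor^S$-vanishing values starting at $i \ge i_0+2$. If the given range of consecutive values already satisfies $i \ge i_0+2$, Corollary~\ref{deformcor} applies verbatim, so the substantive case is when the range is exactly $i_0+1 \le i \le i_0+d+c+1$.

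In that boundary case, set $p := \pd_S N'$ and $N'' := N'/N$, and feed the exact sequence $0 \to N \to N' \to N'' \to 0$ into the long exact Tor sequence. Since $\Tor_j^S(M, N') = 0$ for $j \ge p+1$, one obtains the isomorphism $\Tor_j^S(M, N'') \cong \Tor_{j-1}^S(M, N)$ for every $j \ge p+2$. Combined with the hypothesis, this gives $\Tor_j^S(M, N'') = 0$ for $\max(p+2, i_0+2) \le j \le i_0+d+c+2$. When $p \le i_0$, this window contains $(d+c+1)$ consecutive values $\ge i_0+2$, and Corollary~\ref{deformcor} applied to $(M, N'')$ then delivers $\Tor_j^S(M, N'') = 0$ for every $j \ge i_0+1$ together with either $\pd_R M < \infty$ or $\pd_R N'' < \infty$. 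The isomorphism above transfers the vanishing back to $N$ for all $i \ge i_0+1$, and because $\pd_R N' < \infty$, the projective dimension conclusion transfers from $N''$ to $N$.

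The main obstacle is the case $p > i_0$, which can arise when $\depth M > 0$ (for instance when $M$ is MCM, so $i_0 = 0$). My plan is to handle it by a downward induction on $p$: take a free cover $0 \to K \to P_0 \to N' \to 0$, so that $\pd_S K = p-1$, and form the pullback $\widetilde N := \pi^{-1}(N) \subseteq P_0$, where $\pi : P_0 \to N'$ is the cover. Then $\widetilde N$ embeds in the free module $P_0$ (the base of the induction) and fits in the short exact sequence $0 \to K \to \widetilde N \to N \to 0$ with $\pd_S K < p$, which governs the Tor-transfer between $\widetilde N$ and $N$.

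The delicate point, which I expect to be the main technical hurdle, is that each reduction step shrinks the consecutive Tor-vanishing window for $\widetilde N$ by some indices relative to that for $N$, so the induction hypothesis must be strengthened (by allowing a window whose length depends on $p$), or else an auxiliary vanishing at the top of the window must be extracted separately from $\Tor_j^S(M, N') = 0$ for $j \ge p+1$ and $\Tor_{j-1}^S(M, N) = 0$ in the given range; the improved threshold $i_0+1$ in the statement is precisely the slack that allows this bookkeeping to close.
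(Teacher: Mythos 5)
Your reduction works only in the sub-case $p \le i_0$, and the ``main technical hurdle'' you flag in the case $p > i_0$ is a genuine gap that your pullback induction does not close: passing from $N' $ to a free cover replaces $N$ by $\widetilde N$ with $0 \to K \to \widetilde N \to N \to 0$ and $\pd_S K = p-1$, but this only identifies $\Tor^S_j(M,\widetilde N)$ with $\Tor^S_j(M,N)$ for $j \ge p+1$, so the known vanishing window for $\widetilde N$ is strictly shorter than the one you need to invoke the base case, and iterating only makes the bookkeeping worse. Since $p = \pd_S N'$ can be as large as $\dim S$ while $i_0$ can be $0$ (e.g.\ $M$ MCM), this is not a peripheral case.

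The missing idea is that you are only using the trivial vanishing $\Tor^S_{j > p}(M,N')=0$, whereas one has much more: $\Omega^S_{i_0}(M)$ is a \emph{maximal Cohen--Macaulay} $S$-module, and for an MCM module over a CM local ring and a module $Y$ of finite projective dimension one has $\Tor^S_{i}(\Omega^S_{i_0}(M),Y)=0$ for \emph{all} $i\ge 1$, not merely for $i>\pd_S Y$ (this is \cite[Lem.~2.2]{Yo}, already used in Proposition~\ref{prop:vanishing-Tor-and-finite-pd}). Hence $\Tor^S_j(M,Y)=0$ for all $j\ge i_0+1$ regardless of $p$, and the long exact sequence of $0\to N\to Y\to X:=Y/N\to 0$ yields $\Tor^S_{j+1}(M,X)\cong \Tor^S_j(M,N)$ for every $j\ge i_0+1$. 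This converts your window $[i_0+1,\,i_0+d+c+1]$ for $N$ into the window $[i_0+2,\,i_0+d+c+2]$ for $X$, to which Corollary~\ref{deformcor} applies directly; transferring back through the isomorphism gives (2), and the finiteness of $\pd_R Y$ transfers the projective-dimension alternative from $X$ to $N$. This is exactly the paper's argument, and with this one fact your case analysis on $p$ becomes unnecessary.
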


\begin{proof}
	By the given hypotheses, there is an exact sequence $(\dagger)$: $0\to N \to Y \to X \to 0$ of $S$-modules with $ \pd_S Y < \infty $. Applying $ \Omega_{i_0}^S(M) \otimes_S (-) $ to $(\dagger)$, since $ \Tor^S_{i \ge 1}(\Omega^S_{i_0}(M), Y)=0$ (by \cite[Lem.~2.2]{Yo}), it follows that
	\begin{center}
		$ \Tor^S_{i+1}(\Omega^S_{i_0}(M), X) \cong \Tor^S_i(\Omega^S_{i_0}(M), N) $ for all $i \ge 1$.
	\end{center}
	So $ \Tor^S_{i+i_0+1}(M, X) \cong \Tor^S_{i+i_0}(M,N) $ for all $i \ge 1$, i.e.,
	\begin{equation}\label{Tor-iso}
		\Tor^S_{j+1}(M, X) \cong \Tor^S_j(M, N) \mbox{ for all } j \ge i_0+1.
	\end{equation}
	Suppose the condition (1) holds true. Then \eqref{Tor-iso} yields that $ \Tor^S_{i+1}(M,X) = 0 $ for some $ (d+c+1) $ consecutive values of $ i+1 \ge i_0+2 $. Therefore, by Corollary~\ref{deformcor}, $\Tor_i^S(M,X)=0$ for all $i\ge i_0+1$. Hence, by \eqref{Tor-iso}, $\Tor_i^S(M,N)=0$ for all $i\ge i_0+1$, which is nothing but the condition (2). Under these equivalent conditions, by Corollary~\ref{deformcor}, one concludes that either $\pd_R M<\infty$ or $\pd_R N<\infty$.
\end{proof}    

Now we record a lemma which provides some classes of modules which can be embedded in modules of finite projective dimension. These support Corollary~\ref{gdeform}.  For this, we first record a preliminary observation.

\begin{lemma}\label{funct}
	The following statements hold true.
\begin{enumerate}[\rm(1)]    
    \item
    Let $F:\Mod R\to \Mod R$ be a contravariant functor, and let $\Phi: \Id \to F\circ F$ be a natural transformation, where $\Id: \Mod R \to \Mod R$ is the identity functor. Let $N$ be an $R$-module for which $\Phi_N:N\to \left(F\circ F\right)(N)$ is an isomorphism. Then, $F_{M,N}:\Hom_R(M,N)\to \Hom_R(F(N),F(M))$ is injective for all modules $M$.    
    \item
    Suppose that $R$ is CM which admits a canonical module $\omega$. Let $M$ and $N$ be $R$-modules such that $N$ is MCM. Then, the natural map $\Hom_R(M,N)\to \Hom_R(N^{\dagger},M^{\dagger})$ is injective, where $(-)^{\dagger} := \Hom_R(-,\omega)$.
\end{enumerate}
\end{lemma}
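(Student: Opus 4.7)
The plan for part (1) is to invoke naturality of $\Phi$ directly. Suppose $f \in \Hom_R(M,N)$ satisfies $F_{M,N}(f) = F(f) = 0$; then $F(F(f)) = F(0) = 0$ as well. Applying $\Phi$ as a natural transformation to $f$, the naturality square reads $F(F(f)) \circ \Phi_M = \Phi_N \circ f$, so the right-hand side vanishes. Since $\Phi_N$ is assumed to be an isomorphism, in particular injective, this forces $f = 0$, which is exactly the injectivity of $F_{M,N}$.

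For part (2), the plan is to specialize (1) to the contravariant functor $F := (-)^{\dagger} = \Hom_R(-, \omega)$, with $\Phi$ taken to be the standard biduality natural transformation whose component at $X$ is the evaluation map $x \mapsto (\varphi \mapsto \varphi(x))$ from $X$ to $X^{\dagger\dagger}$. The key input needed is that, under the hypotheses that $R$ is CM with canonical module $\omega$ and $N$ is MCM, the biduality map $\Phi_N : N \to N^{\dagger\dagger}$ is an isomorphism. This is a classical fact in the duality theory of canonical modules (see, e.g., Bruns--Herzog, \emph{Cohen--Macaulay rings}, Theorem~3.3.10). Once this is in hand, part (1) applies to $F$ and $N$ and delivers injectivity of the induced map $\Hom_R(M,N) \to \Hom_R(N^{\dagger}, M^{\dagger})$, $f \mapsto f^{\dagger}$, for every $R$-module $M$.

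I do not anticipate a genuine obstacle here; the argument is formal, and the only substantive ingredient is the standard MCM biduality theorem. The virtue of the formulation in (1) is precisely that it isolates this one input and reduces (2) to it without any further computation.
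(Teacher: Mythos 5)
Your argument is correct and coincides with the paper's proof: part (1) via the naturality square $\Phi_N\circ f=(F\circ F)(f)\circ\Phi_M$ together with $F(f)=0\Rightarrow (F\circ F)(f)=0$ and the invertibility of $\Phi_N$, and part (2) by specializing to $F=\Hom_R(-,\omega)$ with the evaluation transformation and citing the MCM biduality isomorphism \cite[3.3.10]{BH93}. No gaps beyond what the paper itself assumes (namely that $F$ preserves zero morphisms, which holds for the linear functors in use).
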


\begin{proof}
	(1) For every module $M$, and $f\in \Hom_R(M,N)$, we have the following commutative diagram:
$$\begin{tikzcd}
M \arrow[r, "f"] \arrow[d, "\Phi_M"']      & N \arrow[d, "\Phi_N"] \\
(F \circ F)(M) \arrow[r, "(F\circ F)(f)"'] & (F \circ F)(N),  
\end{tikzcd}$$ 
which yields  $\Phi_N^{-1}\circ(F\circ F)(f) \circ \Phi_M= f$. Hence, if $F(f)=0$, then $(F\circ F)(f)=F\left(F(f)\right)=0$, and hence $f=0$. Thus, for every module $M$, $F_{M,N}:\Hom_R(M,N)\to \Hom_R(F(N),F(M))$ is injective.
	
	(2) Apply (1) to the functor $ F(-) := (-)^{\dagger} = \Hom_R(-,\omega) $ and $ \Phi_X : X \to X^{\dagger \dagger} $ being the natural evaluation map for all $X$. Remembering that $\Phi_N$ is an isomorphism whenever  $N$ is MCM (\cite[3.3.10.(d)]{BH93}), we are done. 
\end{proof}  

\begin{lemma}\label{projembed}
	Let $N$ be an $R$-module. Then $N$ can be embedded in a module of finite projective dimension in each of the following cases:
	\begin{enumerate}[\rm(1)]
		\item $ \gdim_R N < \infty $.
		\item $ R $ is CM, and it admits a canonical module $\omega$, and $N\cong \Hom_R(\omega, X)$ for some $R$-module $X$.		
		\item 
  $N\cong \Hom_R(X,\syz M)$ for some $R$-modules $M$ and $X$. 

  \item  $ R $ is CM, and it admits a canonical module, and $N\cong \Hom_R(\left(\syz M\right)^{\dagger},X)$, for some $R$-modules $M$ and $X$ where $\depth M\ge \dim R-1$ and $X$ is MCM.    
	\end{enumerate}
\end{lemma}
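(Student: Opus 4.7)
The plan is to exhibit, in each of the four cases, an explicit embedding $N \hookrightarrow Y$ with $\pd_R Y < \infty$.

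For (1), finite Gorenstein dimension guarantees that $N$ admits a complete (Tate) resolution $\cdots \to T_1 \to T_0 \to T_{-1} \to T_{-2} \to \cdots$ by finitely generated free $R$-modules, acyclic in every degree, with $T_{\ge 0} \to N \to 0$ an ordinary projective resolution of $N$. Exactness at $T_0$ forces the differential $T_0 \to T_{-1}$ to factor as $T_0 \twoheadrightarrow N \hookrightarrow T_{-1}$, so $N$ embeds in the free module $T_{-1}$.

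For (2), I would invoke the Auslander--Buchweitz hull: since $R$ is CM with canonical module, every $R$-module $X$ fits in a short exact sequence $0 \to X \to Y \to M \to 0$ where $M$ is MCM and $Y$ admits a finite $\omega$-resolution $0 \to \omega^{r_n} \to \cdots \to \omega^{r_0} \to Y \to 0$. Applying $\Hom_R(\omega, -)$ to the hull produces $\Hom_R(\omega, X) \hookrightarrow \Hom_R(\omega, Y)$. Because $\End_R(\omega) \cong R$ and $\Ext_R^{\ge 1}(\omega, \omega) = 0$, a standard dimension-shifting argument along the $\omega$-resolution yields $\Ext_R^{\ge 1}(\omega, Y) = 0$; hence applying $\Hom_R(\omega, -)$ to that resolution produces an exact sequence $0 \to R^{r_n} \to \cdots \to R^{r_0} \to \Hom_R(\omega, Y) \to 0$, which is a finite free resolution of $\Hom_R(\omega, Y)$, so $\pd_R \Hom_R(\omega, Y) \le n$.

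For (3), I would pick a free cover $P := R^n \twoheadrightarrow X$. Applying the left-exact functor $\Hom_R(-, \syz M)$ produces $\Hom_R(X, \syz M) \hookrightarrow \Hom_R(P, \syz M) \cong (\syz M)^n$, which in turn embeds into $F^n$ via the defining inclusion $\syz M \hookrightarrow F$, where $F$ is free. For (4), the hypothesis $\depth M \ge d-1$ combined with the depth lemma applied to $0 \to \syz M \to F \to M \to 0$ forces $\syz M$ to be MCM; hence $(\syz M)^\dagger$ is MCM with $(\syz M)^{\dagger\dagger} \cong \syz M$ by reflexivity. Since $X$ is assumed MCM, Lemma~\ref{funct}(2) yields $\Hom_R((\syz M)^\dagger, X) \hookrightarrow \Hom_R(X^\dagger, (\syz M)^{\dagger\dagger}) \cong \Hom_R(X^\dagger, \syz M)$, so applying case (3) to the right-hand side finishes the proof.

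I expect (2) to be the main obstacle: it depends on the nontrivial existence of the Auslander--Buchweitz hull (which is imported rather than built here) and on the foundational vanishing $\Ext_R^{\ge 1}(\omega, \omega) = 0$. The remaining three parts follow from more elementary manipulations: complete resolutions for (1), applying $\Hom_R(-,\syz M)$ to a free cover for (3), and the $\omega$-duality of Lemma~\ref{funct}(2) coupled with a depth-lemma computation for (4).
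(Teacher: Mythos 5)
Part (1) of your proposal contains a genuine error. A complete resolution $T$ of a module $N$ of finite G-dimension $g$ agrees with a projective resolution of $N$ only in degrees $\ge g$; the truncation $T_{\ge 0}$ is a projective resolution of the totally reflexive module $\coker(T_1\to T_0)$, which coincides with $N$ only when $g=0$. So the object you posit --- an acyclic complex of finite free modules whose non-negative part resolves $N$ --- exists precisely when $N$ is totally reflexive, and your factorization $T_0\twoheadrightarrow N\hookrightarrow T_{-1}$ is unavailable for $g>0$. Indeed the conclusion you draw, that $N$ embeds in a \emph{free} module, is false in general: over a discrete valuation ring $R$ the residue field $k$ has $\gdim_R k=\pd_R k=1<\infty$, yet $k$ is torsion and admits no nonzero map to a free module. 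The statement only asks for an embedding into a module of finite projective dimension, and the correct argument (that of \cite[Lem.~2.17]{CFH}, which the paper simply cites) uses the Auslander--Buchweitz-type exact sequence $0\to N\to H\to G\to 0$ with $\pd_R H<\infty$ and $G$ totally reflexive; the desired embedding is $N\hookrightarrow H$.

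Parts (2)--(4) are correct and essentially match the paper's proof. For (2) the paper likewise invokes the hull of finite injective dimension from \cite[Thm.~A]{AB89} and then cites \cite[Exercise~9.6.5.(b)]{BH93} for $\pd_R\Hom_R(\omega,Y)<\infty$; your dimension-shifting argument via $\Ext_R^{\ge 1}(\omega,\omega)=0$ and $\Hom_R(\omega,\omega)\cong R$ is just a proof of that exercise, so nothing is lost. For (3) you take a free cover of $X$ first and then embed $(\syz M)^{\oplus n}$ into a free module, whereas the paper first embeds $\syz M$ into a free module and then embeds the torsionless module $\Hom_R(X,R)$ into a free module; both are valid and equally elementary. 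Part (4) is identical to the paper's argument.
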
 

\begin{proof}
	(1) This is shown in \cite[Lem.~2.17]{CFH}. 
	
	(2) By \cite[Thm.~A]{AB89}, there is an embedding $ 0 \to X \to Y $, where $Y$ is an $R$-module of finite injective dimension. This induces an embedding $ 0 \to \Hom_R(\omega, X) \to \Hom_R(\omega, Y) $. So, it is enough to show that $\Hom_R(\omega, Y)$ has finite projective dimension, which holds true as recorded in \cite[Exercise~9.6.5.(b)]{BH93}.
	
(3) Since $\syz M$ embeds in a free $R$-module, hence $\Hom_R(X,\syz M)$ embeds in $\Hom_R(X,R)^{\oplus n}$ for some integer $n\ge 0$. As $\Hom_R(X,R)$ embeds in a free $R$-module, we are done.  
 

 (4) As $\depth M\ge \dim R-1$, so $\syz M$ is MCM, hence $\left(\syz M\right)^{{\dagger}{\dagger}}\cong \syz M$. By Lemma \ref{funct}(2),  $\Hom_R\left(\left(\syz M\right)^{\dagger},X\right)$ embeds in $\Hom_R\left(X^{\dagger}, \left(\syz M\right)^{{\dagger}{\dagger}}\right)\cong \Hom_R\left(X^{\dagger},\syz M\right)$, and we are done by part (3).
\end{proof}      

\begin{remark}
	In view of Lemma~\ref{projembed}.(1), Corollary~\ref{gdeform} holds true when $\gdim_S N<\infty$ (equivalently, $\gdim_R N<\infty$, cf.~\cite[(2.2.8)]{Cr00}).
\end{remark}

Using the Tor vanishing criteria of Corollary \ref{deformcor} and Corollary \ref{gdeform} , an argument similar to Corollary \ref{minrigid} shows the following

\begin{corollary}\label{cor:rigidity-over-deform}
	Suppose $R$ is CM of minimal multiplicity. Let $ S = R/(f_1,\dots,f_c)R $, where $ f_1,\dots,f_c \in \fm $ is an $R$-regular sequence. Let $M$ be an $S$-module with $ i_0 := \dim(S) - \depth(M) $. Then, $M$ is $(d+c+i_0+2)$-$\Tor$-rigid over $S$. 
\end{corollary}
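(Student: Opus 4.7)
The plan is to deduce this corollary from Corollary \ref{deformcor} in essentially the same bookkeeping style that turned Proposition \ref{prop:vanishing-Tor-and-finite-pd} into Corollary \ref{minrigid}. Concretely, set $m := d+c+i_0+2$. Let $N$ be an arbitrary $S$-module and let $n \ge 1$ be an integer for which
\[
\Tor^S_i(M,N) = 0 \quad \text{for all } n \le i \le n+m-1 = n+d+c+i_0+1.
\]
What must be shown is that $\Tor^S_i(M,N) = 0$ for all $i \ge n$.

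First I would isolate, inside the assumed vanishing window $[n,\,n+d+c+i_0+1]$, the tail
\[
\{\,n+i_0+1,\; n+i_0+2,\; \dots,\; n+d+c+i_0+1\,\},
\]
which is a block of exactly $d+c+1$ consecutive integers. Since $n \ge 1$, each of these indices is $\ge i_0+2$, so the hypothesis of Corollary \ref{deformcor} is satisfied. That corollary then gives $\Tor^S_i(M,N) = 0$ for all $i \ge i_0+1$.

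To finish, I would split into two cases according to the size of $n$ relative to $i_0+1$. If $n \ge i_0+1$, the conclusion $\Tor^S_i(M,N) = 0$ for $i \ge n$ is immediate from the vanishing in the preceding paragraph. If instead $n \le i_0$, then the range $[n, i_0]$ is contained in $[n,\,n+d+c+i_0+1]$ (noting $i_0 \le n+d+c+i_0+1$ trivially), so the original hypothesis already supplies vanishing of $\Tor^S_i(M,N)$ for $n \le i \le i_0$, and splicing with the vanishing for $i \ge i_0+1$ yields $\Tor^S_i(M,N) = 0$ for all $i \ge n$. This exhibits $M$ as $(d+c+i_0+2)$-$\Tor$-rigid over $S$.

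Since Corollary \ref{deformcor} does all the real work, there is no substantive obstacle left; the only thing to check carefully is that the chosen tail of the vanishing window really is a block of $d+c+1$ consecutive integers all bounded below by $i_0+2$, which is exactly where the hypothesis $n \ge 1$ is used.
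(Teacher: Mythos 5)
Your proposal is correct and follows essentially the same route as the paper's own proof: both extract the tail $\{n+i_0+1,\dots,n+d+c+i_0+1\}$ of $d+c+1$ consecutive indices, all at least $i_0+2$ because $n\ge 1$, apply Corollary~\ref{deformcor} to get $\Tor^S_{i\ge i_0+1}(M,N)=0$, and then splice with the hypothesized vanishing window. Your explicit case split on $n$ versus $i_0+1$ is just a slightly more careful write-up of the paper's final sentence.
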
 

\begin{proof}
	Let $N$ be an $S$-module. Suppose that there exists an integer $n \ge 1$ such that $ \Tor^S_{n \le i \le n+d+c+i_0+1}(M,N) = 0 $. Then $ \Tor^S_{n+i_0+1 \le i \le n+d+c+i_0+1}(M,N) = 0 $, and these are $ d+c+1 $ consecutive values of $i \ge n+i_0+1 \ge i_0+2 $. Hence, by Corollary~\ref{deformcor}, we get $\Tor^S_{i\ge i_0+1}(M,N)=0$, and thus $\Tor^S_{i\ge n+d+c+i_0+1}(M,N)=0$.
\end{proof}   

\section{The Auslander-Reiten conjecture over deformations of rings of minimal multiplicity}\label{sec:Ext-persistency-ARC}

In this section, we study a form of Ext-persistency of a deformation of a CM local ring of minimal multiplicity. We also prove that the Auslander-Reiten conjecture (\ref{ARC}) holds true over such rings. For these, a few lemmas are needed.

\begin{lemma}\label{lem:Tor-vanishing-reg-seq}
	Let $ {\bf f} := f_1,\dots,f_c $ be an $R$-regular sequence. Let $N$ be an $R$-module such that $\Tor_{ 1}^R(N,R/({\bf f})) = 0$. Then the sequence $ {\bf f} $ is also $N$-regular.
\end{lemma}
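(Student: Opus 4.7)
The plan is to proceed by induction on the length $c$ of the regular sequence. For the base case $c=1$, tensoring the short exact sequence
\[
0 \longrightarrow R \xrightarrow{\,f_1\,} R \longrightarrow R/(f_1) \longrightarrow 0
\]
with $N$ yields the four-term exact sequence
\[
0 \longrightarrow \Tor_1^R(N, R/(f_1)) \longrightarrow N \xrightarrow{\,f_1\,} N \longrightarrow N/f_1 N \longrightarrow 0,
\]
so the assumed vanishing of $\Tor_1^R(N,R/(f_1))$ is exactly the statement that $f_1$ is $N$-regular.

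For the inductive step ($c \ge 2$), write $\mathbf{f}' := f_1,\dots,f_{c-1}$. Because $\mathbf{f}$ is an $R$-regular sequence, $f_c$ is regular on $R/(\mathbf{f}')$, which gives the short exact sequence
\[
0 \longrightarrow R/(\mathbf{f}') \xrightarrow{\,f_c\,} R/(\mathbf{f}') \longrightarrow R/(\mathbf{f}) \longrightarrow 0.
\]
Applying $N \otimes_R (-)$ and using the hypothesis $\Tor_1^R(N, R/(\mathbf{f}))=0$ produces an exact sequence showing that multiplication by $f_c$ is surjective on the finitely generated $R$-module $\Tor_1^R(N, R/(\mathbf{f}'))$. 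Since $f_c \in \fm$, Nakayama's lemma forces $\Tor_1^R(N, R/(\mathbf{f}')) = 0$, and then the inductive hypothesis applied to $\mathbf{f}'$ shows that $f_1,\dots,f_{c-1}$ is $N$-regular.

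Finally, returning to the same displayed short exact sequence and tensoring once more with $N$, the now-established vanishing $\Tor_1^R(N,R/(\mathbf{f}')) = 0$ implies that $f_c$ is regular on $N/(\mathbf{f}')N$. Combined with the previous step, this means that $f_1,\dots,f_c$ is $N$-regular. There is no real obstacle in this argument; the only points requiring care are the local hypothesis ensuring $f_c\in\fm$, so that Nakayama's lemma applies to the finitely generated Tor module, and the use of Setup~\ref{setup} to guarantee that $N$ (and hence $\Tor_1^R(N, R/(\mathbf{f}'))$) is finitely generated.
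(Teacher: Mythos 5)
Your argument is essentially the paper's proof: induction on $c$, the base case read off the four-term Tor sequence, and the inductive step using Nakayama's lemma to kill $\Tor_1^R(N,R/({\bf f}'))$ before invoking the inductive hypothesis. The only real divergence is the last step: the paper, having established that ${\bf f}'$ is $N$-regular, passes to $R'=R/({\bf f}')$ and uses the change-of-rings isomorphism $\Tor_1^{R'}(N/({\bf f}')N, R'/f_cR')\cong\Tor_1^R(N,R/({\bf f}))$ to reduce to the base case over $R'$, whereas you read the injectivity of $f_c$ on $N/({\bf f}')N$ directly off the long exact sequence over $R$ — a slightly more elementary route that avoids the change-of-rings lemma. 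However, your justification for that step is misattributed. The relevant portion of the long exact sequence obtained by tensoring $0\to R/({\bf f}')\xrightarrow{f_c}R/({\bf f}')\to R/({\bf f})\to 0$ with $N$ is
\[
\Tor_1^R(N,R/({\bf f}))\longrightarrow N/({\bf f}')N \xrightarrow{\,f_c\,} N/({\bf f}')N \longrightarrow N/({\bf f})N\longrightarrow 0,
\]
so the injectivity of $f_c$ on $N/({\bf f}')N$ follows from the \emph{original hypothesis} $\Tor_1^R(N,R/({\bf f}))=0$, not from the newly established vanishing $\Tor_1^R(N,R/({\bf f}'))=0$ as you claim. The latter alone does not suffice: for $R=k[[x,y]]$, ${\bf f}=x,y$ and $N=R/(y)$ one has $\Tor_1^R(N,R/(x))=0$, yet $y$ is a zerodivisor on $N/xN=k$. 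With the justification corrected to cite the hypothesis, the proof is sound.
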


\begin{proof}
	We use induction on $c$. First assume that $c=1$. Considering the exact sequence $0\to R \xrightarrow{f_1 \cdot} R \to R/(f_1)\to 0$, and applying $N\otimes_R (-)$, we get a short exact sequence $0\to N \xrightarrow{f_1 \cdot} N \to N/(f_1)N\to 0$. Thus $f_1$ is $N$-regular. Next assume that $c\ge 2$. Set $ R' := R/(f_1,\dots,f_{c-1}) $ and $N' := N \otimes_R R'$. From the long exact sequence of Tor modules induced by the short exact sequence $0\to R' \xrightarrow{f_c \cdot} R' \to R/({\bf f})\to 0$,
	we get that $\Tor^R_1(N,R')\xrightarrow{f_c \cdot} \Tor^R_1(N,R')\to 0$, hence $\Tor^R_1(N,R')=f_c\cdot\Tor^R_1(N,R')$. By the Nakayama's Lemma, it follows that $\Tor_{1}^R(N,R') = 0$. Therefore, by the induction hypothesis, $f_1,\dots,f_{c-1}$ is $N$-regular. Hence
	\[
		\Tor^{R'}_{1}\big( N', R'/f_cR' \big) \cong \Tor^R_{1}(N, R/({\bf f})) = 0, \;\mbox{ cf. \cite[p.~140, Lem.~2]{Mat86}}.
	\]
	So, by the base case, $f_c$ is $N'$-regular. Thus the sequence $ {\bf f} $ is $N$-regular.
\end{proof}

We say that $R$ is `weakly Ext-persistent' if the following condition is satisfied: For every $R$-module $M$, with $\dim(R)-\depth(M)\le 1$, the vanishing of $\Ext^{\ge 2}_R(M,M)$ implies that either $\pd_R M<\infty$ or $\id_R M<\infty$.   

\begin{lemma}\label{extdeform}
	Let $x\in \fm$ be an $R$-regular element. If $R$ is weakly $\Ext$-persistent, then so is $R/(x)$.  
\end{lemma}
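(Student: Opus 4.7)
The plan is to lift the $\bar R$-module to a suitable $R$-module, apply the weak $\Ext$-persistency of $R$, and then descend the resulting finiteness back to $\bar R := R/(x)$. Let $\bar M$ be a finitely generated $\bar R$-module satisfying $\dim(\bar R) - \depth_{\bar R}(\bar M) \le 1$ and $\Ext^i_{\bar R}(\bar M, \bar M) = 0$ for all $i \ge 2$. Since in particular $\Ext^2_{\bar R}(\bar M, \bar M) = 0$, the Auslander--Ding--Solberg lifting theorem supplies an $R$-module $M$ on which $x$ is regular and with $M/xM \cong \bar M$. Because $x$ is $M$-regular, $\depth_R M = \depth_{\bar R}\bar M + 1$, so
\[
\dim R - \depth_R M \;=\; (\dim \bar R + 1) - (\depth_{\bar R} \bar M + 1) \;=\; \dim \bar R - \depth_{\bar R} \bar M \;\le\; 1,
\]
matching the depth-defect hypothesis in the definition of weak $\Ext$-persistency.

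The main technical step is to verify $\Ext^i_R(M, M) = 0$ for all $i \ge 2$. Applying Rees's isomorphism, which is valid since $x$ is $M$-regular and annihilates $\bar M$, we get $\Ext^i_R(\bar M, M) \cong \Ext^{i-1}_{\bar R}(\bar M, M/xM) = \Ext^{i-1}_{\bar R}(\bar M, \bar M)$ for all $i \ge 1$, which vanishes for $i \ge 3$ by hypothesis. Applying $\Hom_R(-, M)$ to the short exact sequence $0 \to M \xrightarrow{x \cdot} M \to \bar M \to 0$ then produces a long exact sequence whose connecting map on $\Ext^i_R(M, M)$ is multiplication by $x$; the vanishing just obtained forces this multiplication to be surjective on $\Ext^i_R(M, M)$ for every $i \ge 2$. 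Since each $\Ext^i_R(M, M)$ is a finitely generated $R$-module and $x \in \fm$, Nakayama's lemma yields $\Ext^i_R(M, M) = 0$ for all $i \ge 2$. This is the step I expect to be the main obstacle: the lifting of $\bar M$ is precisely what lets us balance the two $\Ext$-entries and convert vanishing over $\bar R$ into vanishing over $R$.

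Now weak $\Ext$-persistency of $R$ applied to $M$ gives either $\pd_R M < \infty$ or $\id_R M < \infty$. Since $x$ is both $R$-regular and $M$-regular, reducing a finite free resolution of $M$ modulo $x$ yields a free resolution of $M/xM = \bar M$ over $\bar R$, so $\pd_R M < \infty$ implies $\pd_{\bar R}\bar M \le \pd_R M < \infty$. On the other hand, Rees's isomorphism $\Ext^i_R(k, M) \cong \Ext^{i-1}_{\bar R}(k, \bar M)$ for $i \ge 1$ (using that $x$ is $M$-regular and $k$ is annihilated by $x$) shows $\id_R M = \id_{\bar R}\bar M + 1$, whence $\id_R M < \infty$ implies $\id_{\bar R}\bar M < \infty$. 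In either case $\bar M$ satisfies the defining conclusion of weak $\Ext$-persistency over $\bar R$, completing the proof.
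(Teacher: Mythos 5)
Your proof is correct and takes essentially the same route as the paper's: lift $\bar M$ to an $R$-module via the Auslander--Ding--Solberg theorem, use the Rees change-of-rings isomorphism together with the long exact sequence and Nakayama to conclude $\Ext^{\ge 2}_R(M,M)=0$, then invoke weak $\Ext$-persistency of $R$ and descend both finiteness conclusions modulo $x$. The only differences are notational (the paper calls the lift $N$) and that the paper extracts the regularity of $x$ on the lift from the Tor-vanishing in the lifting statement rather than asserting it directly.
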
 

\begin{proof}
	Let $M$ be an $R/(x)$-module such that $\dim(R/(x))-\depth(M)\le 1$ and $\Ext^{\ge 2}_{R/(x)}(M,M)=0$. By \cite[Prop.~1.7]{ADS93}, there exists an $R$-module $N$ such that $M \cong N/xN$ and $\Tor^R_{\ge 1}(N,R/(x))=0$. Considering the exact sequence
	$0\to R \xrightarrow{x \cdot} R \to R/(x)\to 0$, and applying $N\otimes_R (-)$, we get a short exact sequence $(\alpha)$: $0\to N \xrightarrow{x\cdot} N \to M\to 0$. Hence $x$ is $N$-regular. Since $\Ext^{\ge 2}_{R/(x)}(M,M)=0$, by \cite[3.1.16]{BH93}, it follows that $\Ext^{\ge 3}_{R}(M,N)=0$.
	Therefore, applying $\Hom_R(-,N)$ to $(\alpha)$, for every $i\ge 2$, there is an exact sequence
	\begin{center}
		$\Ext^i_R(N,N)\xrightarrow{x\cdot}\Ext^i_R(N,N)\to \Ext^{i+1}_R(M,N)=0$.
	\end{center}
	Hence, by the Nakayama's Lemma, $\Ext^i_R(N,N)=0$ for all $i\ge 2$. Note that $\dim(R)-\depth(N)=\dim(R/(x))+1-\depth(M)-1=\dim(R/(x))-\depth(M)\le 1$. Therefore, since $R$ is weakly-Ext persistent, either $\pd_R N<\infty$ or $\id_R N<\infty$.
	It follows that either $\pd_{R/(x)} M<\infty$ or $\id_{R/(x)} M<\infty$; see, e.g., \cite[1.3.5 and 3.1.15]{BH93}. Thus $R/(x)$ is weakly-Ext persistent.    
\end{proof} 

\begin{theorem}\label{thm:defor-Ext-persistency}
	Suppose $R$ is CM of minimal multiplicity. Let $ S = R/(f_1,\dots,f_c)R $, where $ f_1,\dots,f_c $ is an $R$-regular sequence. Let $M$ be an $S$-module such that $ i_0 := \dim(S)-\depth(M)\le 1$ and $\Ext^i_S(M,M)=0$ for some $(d+c+1)$ consecutive values of $i\ge i_0+2$. Then either $\pd_S M < \infty $ or $\id_S M<\infty$.
\end{theorem}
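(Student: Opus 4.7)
The plan is to deduce the conclusion from weak Ext-persistency of $S$, obtained by transferring the property from $R$ via Lemma~\ref{extdeform}, combined with a CI-dimension-based upgrade of the $(d+c+1)$-consecutive Ext vanishing hypothesis into vanishing of all $\Ext^{\ge 2}_S$ for a suitable MCM syzygy.

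To establish the base case of weak Ext-persistency for $R$ itself, let $N$ be an $R$-module with $\dim R-\depth_R N\le 1$ and $\Ext^{\ge 2}_R(N,N)=0$; assuming $\pd_R N=\infty$, $U:=\Omega^R_{\dim R-\depth_R N+1}(N)$ is MCM Ulrich over $R$ (exactly as in the proofs of Proposition~\ref{prop:vanishing-Tor-and-finite-pd} and Corollary~\ref{cor:min-mult-comlete-inter}). Dimension-shifting in the first argument yields $\Ext^j_R(U,N)\cong \Ext^{j+\dim R-\depth_R N+1}_R(N,N)=0$ for every $j\ge 1$, and Proposition~\ref{prop:improvment-Ghosh-Puthen}(3) then forces $\pd_R U<1$, i.e., $U$ is free, contradicting $\pd_R N=\infty$. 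Hence $\pd_R N<\infty$, so $R$ is weakly Ext-persistent. Iterating Lemma~\ref{extdeform} along $f_1,\dots,f_c$ transfers this property to $S$.

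To activate weak Ext-persistency on $M$, I upgrade the hypothesis. Viewing $M$ as an $R$-module via $R\to S$, the iterated Eisenbud/Shamash change-of-rings long exact sequence for the regular sequence $f_1,\dots,f_c$ translates the $(d+c+1)$-consecutive vanishing of $\Ext^*_S(M,M)$ into $(d+1)$-consecutive vanishing of $\Ext^*_R(M,M)$ (each of the $c$ applications preserves vanishing while shortening the consecutive range by one and shifting the start up by one). The base-case Ulrich-syzygy argument applied to $\Omega^R_{c+i_0+1}(M)$ (which is MCM Ulrich over $R$ when $\pd_R M=\infty$), together with Proposition~\ref{prop:improvment-Ghosh-Puthen}(3), then forces $\pd_R M<\infty$. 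By Avramov--Gasharov--Peeva, $M$ thus has finite CI-dimension over $S$ with $\cx_S(M)\le c$ and $\gdim_S M=i_0\le 1$. The Avramov--Buchweitz-type rigidity of Ext for modules of finite CI-dimension converts the $(d+c+1)>\cx_S(M)+1$ consecutive vanishing into $\Ext^{\gg 0}_S(M,M)=0$. Passing to the MCM syzygy $M':=\Omega^S_{i_0}(M)$, which is totally reflexive (because $\gdim_S M\le i_0$), one uses $\Ext^{\ge 1}_S(M',S)=0$ to run the standard dimension-shift through the short exact sequence $0\to M'\to F\to \Omega^S_{i_0-1}(M)\to 0$ and obtain $\Ext^i_S(M',M')\cong \Ext^i_S(M,M)$ for $i\ge 2$, hence $\Ext^{\ge 2}_S(M',M')=0$. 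Weak Ext-persistency of $S$ applied to $M'$ finally yields $\pd_S M'<\infty$ or $\id_S M'<\infty$, which is equivalent to $\pd_S M<\infty$ or $\id_S M<\infty$.

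The principal obstacle is this upgrade: coordinating the iterated change-of-rings bookkeeping (which is just tight enough so that $(d+c+1)$ consecutive vanishing over $S$ survives as $(d+1)$ consecutive over $R$) with the Avramov--Buchweitz rigidity for modules of finite CI-dimension. A subsidiary subtlety is the MCM reduction when $i_0=1$: the $\Ext^*_S(M',S)$ terms arising in the short exact sequence must be controlled, which is possible only because the finiteness of $\cid_S M$ forces $\gdim_S M'=0$, making $M'$ totally reflexive.
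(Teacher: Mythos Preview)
Your argument has a repeated misapplication of Proposition~\ref{prop:improvment-Ghosh-Puthen}, and in the second occurrence it is fatal. In that proposition, part (3) applies when the Ulrich module sits in the \emph{second} Ext argument, concluding finite projective dimension of the first; part (2) applies when the Ulrich module sits in the \emph{first} argument, concluding finite injective dimension of the second. In both of your applications the Ulrich syzygy is in the first slot, so only part (2) is available.

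For the weak Ext-persistency of $R$ this is harmless: from $\Ext^{j\ge 1}_R(U,N)=0$ with $U$ Ulrich, part (2) gives $\id_R N<\infty$, which is exactly the disjunct needed (and is precisely how the paper argues). Your claim that $U$ is free is unfounded, but the step survives with the corrected citation.

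In the upgrade step, however, the error breaks the proof. From $\Ext^j_R\big(\Omega^R_{c+i_0+1}(M),M\big)=0$ you can only conclude $\id_R M<\infty$, not $\pd_R M<\infty$. Without $\pd_R M<\infty$ you have no finite CI-dimension over $S$, no finite $\gdim_S M$, and hence no totally reflexive $M'$; the entire chain leading to $\Ext^{\ge 2}_S(M',M')=0$ collapses. Even if one grants $\pd_R M<\infty$, a further gap remains: the Avramov--Buchweitz rigidity you invoke yields at best $\Ext^{i\ge n_0}_S(M,M)=0$ for the starting index $n_0\ge i_0+2$ of the consecutive block, and the isomorphism $\Ext^i_S(M',M')\cong\Ext^i_S(M,M)$ for $i\ge 2$ does not improve this to $\Ext^{\ge 2}_S(M',M')=0$; the ``hence'' is unjustified.

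The paper sidesteps all of this. It invokes \cite[Cor.~6.3]{GP19b} directly to obtain $\Ext^i_S(M,M)=0$ for all $i\ge i_0+1$, so that (since $i_0\le 1$) already $\Ext^{\ge 2}_S(M,M)=0$; weak Ext-persistency is then applied to $M$ itself, with no syzygy passage and no CI-dimension detour. Your change-of-rings/CI-dimension route is an attempt to reprove that external result from scratch, but as written it does not close.
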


\begin{proof}
	By \cite[Cor.~6.3]{GP19b}, we get that $\Ext^i_S(M,M)=0$ for all $i \ge i_0+1$. In particular, $\Ext^{\ge 2}_S(M,M)=0$ as $i_0 \le 1$. The proof follows if we show that $S$ is weakly-Ext persistent. By Lemma~\ref{extdeform} and induction on $c$, it is enough to show that $R$ is weakly-Ext persistent. But that easily follows from Proposition~\ref{prop:improvment-Ghosh-Puthen}(2). Indeed, let $M$ be an $R$-module such that $\Ext^{\gg 0}_R(M,M)=0$. If $\pd_R M=\infty$, then $\syz_{d+1}^R(M)$ is an Ulrich $R$-module. Also, $\Ext^{\gg 0}_R(\syz_{d+1}^R(M), M)=0$, and hence it follows from Proposition~\ref{prop:improvment-Ghosh-Puthen}.(2) that $\id_R M<\infty$.  
\end{proof} 

\begin{example}
	Over a local complete intersection ring $S$ of dimension $1$, the vanishing of $\Ext^i_S(M,N)=0$ for all $i \gg 0$ does not necessarily imply that either $\pd_S M < \infty $ or $\id_S N<\infty$. Set $S := k[[x,y,z]]/(xz-y^2, xy - z^2)$ with $k$ a field, $M := S/(x,y)$ and $N := S/(x, z)$. It is shown in \cite[4.2]{Jor99} that $\Tor^S_i(M,N)=0$ for all $i \ge 2$, but $\pd_S M = \infty $ and $\pd_S N = \infty$. With the same method, it can be observed that $\Ext^i_S(M,N)=0$ for all $i \ge 2$, but $\pd_S M = \infty $ and $\id_S N = \infty$.
\end{example}


	\begin{remark}
		Using the methods of \cite[Sec.~6]{AINS20} (in particular, \cite[Prop.~6.7]{AINS20}), the condition $\dim(S)-\depth(M)\le 1$ can be dropped from Theorem~\ref{thm:defor-Ext-persistency}. However, \cite[Prop.~6.7]{AINS20} is quite non-trivial and uses the detailed structure of total Ext-algebra. Compared to that, our method, although gives weaker result, are elementary, and only depends on \cite[Prop.~1.7]{ADS93}.
	\end{remark}

Now we are in a position to show that the Auslander-Reiten conjecture holds true over a deformation of a CM local ring of minimal multiplicity.

\begin{conjecture}[Auslander-Reiten, \cite{AR75}]\label{ARC}
	For an $R$-module $M$, if $ \Ext^i_R(M, M \oplus R) = 0 $ for all $ i > 0 $, then $M$ is projective.
\end{conjecture}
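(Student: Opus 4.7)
My plan is to reduce to the maximal Cohen--Macaulay case and then invoke Theorem~\ref{thm:defor-Ext-persistency}. Let $M$ be an $S$-module with $\Ext^i_S(M, M) = \Ext^i_S(M, S) = 0$ for all $i \ge 1$, and set $i_0 := \dim(S) - \depth_S(M)$. First I would replace $M$ by $M' := \Omega^{i_0}_S(M)$, which is MCM over $S$. Dimension-shifting gives $\Ext^i_S(M', S) \cong \Ext^{i+i_0}_S(M, S) = 0$ for all $i \ge 1$. For the vanishing of $\Ext^i_S(M', M')$, I would combine $\Ext^i_S(\Omega^n M, \Omega^n M) \cong \Ext^{i+n}_S(M, \Omega^n M)$ with the short exact sequences $0 \to \Omega^k M \to F_{k-1} \to \Omega^{k-1}M \to 0$; since $\Ext^{\ge 1}_S(M, F_{k-1}) = 0$ by hypothesis, a straightforward induction on $n$ yields $\Ext^{i+n}_S(M, \Omega^n M) \cong \Ext^i_S(M, M) = 0$ for $i \ge 1$. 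Thus $M'$ satisfies the same Ext-vanishing hypotheses as $M$, and the invariant $i_0'$ for $M'$ is $0$.

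Since $i_0' \le 1$ and $\Ext^i_S(M', M') = 0$ for all $i \ge 2$, Theorem~\ref{thm:defor-Ext-persistency} gives either $\pd_S M' < \infty$ or $\id_S M' < \infty$. In the first case, the Auslander--Buchsbaum formula together with the MCM property of $M'$ forces $\pd_S M' = 0$, so $M'$ is free. Then $\pd_S M \le i_0 < \infty$, and Auslander--Buchsbaum applied to $M$ gives $\pd_S M = i_0$. If $i_0 \ge 1$, Lemma~\ref{nonvanish} combined with $\Ext^{i_0}_S(M, S) = 0$ produces a contradiction, so $i_0 = 0$ and $M = M'$ is free.

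In the case $\id_S M' < \infty$, I would pass to the completion to assume $S$ has a canonical module $\omega_S$. Then $M'$ being MCM of finite injective dimension forces $M' \cong \omega_S^{\oplus r}$ for some $r \ge 1$ by a classical result, and the hypothesis $\Ext^i_S(M', S) = 0$ for $i \ge 1$ yields $\Ext^i_S(\omega_S, S) = 0$ for $i \ge 1$. By the well-known characterization that the canonical module has finite Gorenstein dimension over $S$ if and only if $S$ is Gorenstein, we conclude $S$ is Gorenstein, whence $\omega_S \cong S$ and $M'$ is free; this reduces back to the previous case. The main obstacle is precisely this final step: deducing Gorenstein-ness of $S$ from $\Ext^{\ge 1}_S(\omega_S, S) = 0$ requires invoking the G-dimension characterization of Gorenstein rings, while all other steps follow directly from results established earlier in the paper.
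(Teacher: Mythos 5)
First, note that the statement labeled \ref{ARC} is the Auslander--Reiten conjecture itself, which is open in general; the paper does not prove it for arbitrary local rings. Your argument is clearly aimed at the special case established in Theorem~\ref{thm:ARC-over-deform}, namely $S=R/(f_1,\dots,f_c)$ with $R$ Cohen--Macaulay of minimal multiplicity, so I compare it with the proof of that theorem. Your reduction to the MCM module $M'=\Omega^{i_0}_S(M)$ is correct and is essentially Lemma~\ref{lem:Ext-vanishing}; the appeal to Theorem~\ref{thm:defor-Ext-persistency} and the disposal of the case $\pd_S M'<\infty$ via Auslander--Buchsbaum and Lemma~\ref{nonvanish} are also fine and parallel the paper.

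The genuine gap is the final step of the case $\id_S M'<\infty$. Having identified $M'\cong\omega_S^{\oplus r}$, you claim that $\Ext^{\ge 1}_S(\omega_S,S)=0$ forces $S$ to be Gorenstein ``by the G-dimension characterization.'' But the criterion ``$\gdim_S(\omega_S)<\infty$ iff $S$ is Gorenstein'' cannot be invoked here: finite G-dimension of the MCM module $\omega_S$ means total reflexivity, which requires not only $\Ext^{\ge 1}_S(\omega_S,S)=0$ but also $\Ext^{\ge 1}_S(\Hom_S(\omega_S,S),S)=0$ and the reflexivity of $\omega_S$ with respect to $(-)^{*}$, and these extra conditions are not supplied by your hypotheses. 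The implication ``$\Ext^{>0}_S(\omega_S,S)=0\Rightarrow S$ Gorenstein'' is precisely the (second) Tachikawa conjecture, which is open in general and is not known for the rings $S$ considered here. The paper avoids this trap by a different route: after reducing to $M$ MCM with $\Ext^{\ge 1}_S(M,M)=\Ext^{\ge 1}_S(M,S)=0$, it applies \cite[Thm.~6.2]{GP19b} to conclude that either $\pd_R M<\infty$ or $\id_R S<\infty$; it then uses $\Ext^2_S(M,M)=0$ and \cite[Prop.~1.7]{ADS93} to lift $M$ to an $R$-module $N$ on which $f_1,\dots,f_c$ is regular, whence $\pd_R M=\pd_S M+c=\infty$, forcing $\id_R S<\infty$, i.e., $R$ and hence $S$ is Gorenstein; finally $\id_S M<\infty$ then gives $\pd_S M<\infty$, a contradiction. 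To complete your argument you would need to replace the Tachikawa-type step by this (or an equivalent) use of the $\Ext_S(M,S)$-vanishing over $R$.
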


\begin{theorem}\label{thm:ARC-over-deform}
	Suppose $R$ is CM of minimal multiplicity. Let $ S = R/(f_1,\dots,f_c)R $, where $ f_1,\dots,f_c $ is an $R$-regular sequence. Let $M$ be an $S$-module. Set $i_0 := \dim(S)-\depth(M)$. Let $n\ge i_0+2$ be an integer. Then
	\begin{enumerate}[\rm (1)]
		\item If $\Ext^i_S(M,M)=\Ext_S^j(M,S)=0$ for all $n \le i \le n+(d+c)$ and $n \le j \le n + i_0 + (d+c)$, then $\pd_S M < \infty$.
		\item If $\Ext^i_S(M,M)=\Ext_S^j(M,S)=0$ for all $i_0+2 \le i \le i_0+(d+c)+2$ and $1 \le j \le 2i_0 + (d+c)+2$, then $M$ is free.
	\end{enumerate}
\end{theorem}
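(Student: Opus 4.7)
The plan is to reduce part~(2) to part~(1) via the Auslander-Buchsbaum formula and Lemma~\ref{nonvanish}, and to establish (1) by applying Theorem~\ref{thm:defor-Ext-persistency} to the auxiliary MCM module $L := \Omega^S_{i_0}(M) \oplus S$.

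For part~(1), set $K := \Omega^S_{i_0}(M)$, which is MCM over $S$. Lemma~\ref{lem:Ext-vanishing}(2) with $l = i_0$ and $m = d+c$ converts the two Ext vanishing hypotheses of (1) into $\Ext^i_S(K,K) = 0$ for $n \le i \le n+d+c$, while a standard dimension shift on the $\Ext^j_S(M,S)$ hypothesis yields $\Ext^i_S(K,S) = 0$ for $n-i_0 \le i \le n+d+c$. Since $\Ext^i_S(S,-) = 0$ for $i \ge 1$, the self-Ext of $L = K \oplus S$ splits as $\Ext^i_S(K,K) \oplus \Ext^i_S(K,S)$, so $\Ext^i_S(L,L) = 0$ for the $(d+c+1)$ consecutive indices $n \le i \le n+d+c$, each $\ge 2$. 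Because $L$ is MCM we have $\dim(S) - \depth(L) = 0 \le 1$, and Theorem~\ref{thm:defor-Ext-persistency} gives either $\pd_S L < \infty$ or $\id_S L < \infty$. The first alternative immediately yields $\pd_S K < \infty$, hence $\pd_S M \le i_0 + \pd_S K < \infty$. In the second, the presence of $S$ as a direct summand of $L$ forces $\id_S S < \infty$, so $S$ is Gorenstein; over a Gorenstein ring, finite injective dimension is equivalent to finite projective dimension for finitely generated modules, so $\id_S K < \infty$ implies $\pd_S K < \infty$, and as $K$ is MCM the Auslander-Buchsbaum formula gives $\pd_S K = 0$, i.e., $K$ is free, so again $\pd_S M \le i_0 < \infty$.

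Part~(2) then follows by applying (1) with $n = i_0 + 2$: the hypothesis $\Ext^j_S(M,S) = 0$ for $1 \le j \le 2i_0 + (d+c) + 2$ contains the subrange $i_0 + 2 \le j \le 2i_0 + (d+c) + 2$ required by (1), and hence $\pd_S M < \infty$. Assuming $M \ne 0$, Auslander-Buchsbaum gives $\pd_S M = \dim(S) - \depth_S(M) = i_0$; if $i_0 \ge 1$, then $1 \le i_0 \le 2i_0 + (d+c) + 2$, so the hypothesis forces $\Ext^{i_0}_S(M,S) = 0$, contradicting Lemma~\ref{nonvanish}. Hence $i_0 = 0$ and $M$ is free. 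The main subtlety is the second case of part~(1): one must eliminate the possibility $\id_S K < \infty$ without $S$ being Gorenstein, and the device of adjoining $S$ as a summand of $L$ bypasses this cleanly by putting the injective dimension of $S$ itself into the conclusion of Theorem~\ref{thm:defor-Ext-persistency}.
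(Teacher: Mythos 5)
Your proof is correct, and part (1) follows a genuinely different route from the paper's. The paper argues by contradiction: assuming $\pd_S M=\infty$, it reduces to the MCM case, invokes \cite[Thm.~6.2]{GP19b} to get full vanishing of $\Ext_S(M,M)$ and $\Ext_S(M,S)$ together with the dichotomy ``$\pd_R M<\infty$ or $\id_R S<\infty$'', then lifts $M$ to an $R$-module via \cite[Prop.~1.7]{ADS93} and Lemma~\ref{lem:Tor-vanishing-reg-seq} to rule out $\pd_R M<\infty$, concluding that $S$ is Gorenstein and deriving a contradiction from $\id_S M<\infty$. Your device of applying Theorem~\ref{thm:defor-Ext-persistency} to $L=\Omega^S_{i_0}(M)\oplus S$ is slicker: since $\Ext^i_S(L,L)\cong\Ext^i_S(K,K)\oplus\Ext^i_S(K,S)$ for $i\ge 1$, the $\Ext_S(M,S)$-vanishing hypothesis is absorbed into the self-Ext of $L$, and the ``$\id_S L<\infty$'' branch automatically forces $\id_S S<\infty$, i.e.\ $S$ Gorenstein, whence $\pd_S K<\infty$ by \cite[3.1.25]{BH93}. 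This bypasses \cite[Thm.~6.2]{GP19b}, the lifting result of Auslander--Ding--Solberg, and Lemma~\ref{lem:Tor-vanishing-reg-seq} entirely, at the cost of not producing the intermediate full Ext-vanishing that the paper's argument yields (which is not needed for the stated conclusion). Your part (2) is the same as the paper's, just with the Auslander--Buchsbaum and Lemma~\ref{nonvanish} bookkeeping spelled out. All the index ranges check out: $n-i_0\ge 2$ guarantees the dimension shift $\Ext^j_S(K,S)\cong\Ext^{j+i_0}_S(M,S)$ is legitimate, and $L$ being MCM puts you in the case $\dim(S)-\depth(L)=0\le 1$ required by Theorem~\ref{thm:defor-Ext-persistency}.
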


\begin{proof}
	(1) We prove the statement by way of contradiction. If possible, assume that $\pd_S M = \infty$. By Lemma~\ref{lem:Ext-vanishing},
	\begin{center}
		$\Ext^j_S(\Omega_{i_0}^S(M), \Omega_{i_0}^S(M))=0 $ for all  $ n\le j \le n+(d+c) $.
	\end{center}
	Note that $\Ext_S^j(\Omega_{i_0}^S(M), S) \cong \Ext_S^{j+i_0}(M,S)=0$ for all $n - i_0 \le j \le n + (d+c)$. Since $ \Omega_{i_0}^S(M) $ is an MCM $S$-module, replacing $M$ by $\Omega_{i_0}^S(M)$, we may assume that $M$ is MCM. Therefore, by virtue of \cite[Thm.~6.2]{GP19b},
	\begin{equation}
		\Ext^i_S(M,M)=\Ext_S^i(M,S)=0 \mbox{ for all } i \ge 1.
	\end{equation}
	Moreover, \cite[Thm.~6.2]{GP19b} yields that
	\begin{equation}\label{either-or-pd-id}
	\mbox{either $\pd_R M < \infty$ or $\id_R S < \infty$.}
	\end{equation}
	Since $\pd_S M = \infty$, in view of Theorem~\ref{thm:defor-Ext-persistency}, we have
	\begin{equation}\label{id-finite}
	\id_S M < \infty.
	\end{equation}
	Set $ {\bf f} := f_1,\dots,f_c $. Since $ \Ext^2_S(M,M)=0$, by \cite[Prop.~1.7]{ADS93}, there exists an $R$-module $N$ such that $M \cong N/({\bf f})N$ and $\Tor_{\ge 1}^R(N,R/({\bf f})) = 0$. Then Lemma~\ref{lem:Tor-vanishing-reg-seq} yields that $ {\bf f} $ is also $N$-regular. It follows that
	\begin{equation*}
	\pd_R M = \pd_R N/({\bf f})N = \pd_R N + c = \pd_{R/({\bf f})} N/({\bf f})N +c = \pd_S M +c = \infty,
	\end{equation*}
	see, e.g., \cite[1.3.6 and 1.3.5]{BH93}. Therefore, by \eqref{either-or-pd-id}, $\id_R S < \infty$, which implies that $\id_R R < \infty$, i.e., $R$ is Gorenstein, and hence $S$ is Gorenstein. So \eqref{id-finite} yields that $\pd_S M < \infty$ (cf.~\cite[3.1.25]{BH93}), which contradicts the assumtion that $\pd_S M = \infty$. Therefore $\pd_S M < \infty$.
	
	(2) It follows from (1) that $\pd_S M < \infty$. Hence, by Lemma~\ref{nonvanish}, $M$ is free.
\end{proof}

\begin{remark}
	The Auslander-Reiten conjecture is known to hold true in many particular cases. See \cite[Cor.~1.3]{GT21} and the preceding paragraph for a short survey on this conjecture.
\end{remark}

\section*{Acknowledgments}
The authors thank the anonymous referee for various suggestions that improve the presentation of the article. Ghosh was supported by Start-up Research Grant (SRG) from SERB, DST, Govt.~of India with the Grant No SRG/2020/000597.

\end{document}